\theoremstyle{plain}
\newtheorem{theorem}{Theorem}[section]
\newtheorem{lemma}{Lemma}[section]
\newtheorem{cor}{Corollary}[section]
\theoremstyle{definition}
\newtheorem{definition}{Definition}[section]
\theoremstyle{remark}
\newtheorem{remark}{Remark}[section]
\numberwithin{table}{section}
\numberwithin{equation}{section}
\newcommand{\real}{{\mathbb R}}
\newcommand{\roots}{{\mathcal R}}
\newcommand{\proots}{{\mathcal R}^+}
\newcommand{\comp}{{\mathbb C}}
\newcommand{\G}{{\mathbb G}}
\newcommand{\Lie}{{\mathcal G}}
\newcommand{\T}{{\mathcal T}}
\newcommand{\To}{{\mathbb T}}
\newcommand{\Ho}{{\mathcal H}}
\newcommand{\eps}{\varepsilon}
\newcommand{\xe}{X^{\eps}}
\newcommand{\ue}{u_{\varepsilon}}
\newcommand{\ve}{\nabla_{\hspace{-0.1cm}\T}^{\eps}}
\newcommand{\vv}{\nabla_{\hspace{-0.1cm}\T}}
\newcommand{\he}{\nabla_{\hspace{-0.1cm}\Ho}}
\newcommand{\we}{\omega_{\varepsilon}}
\newcommand{\nae}{\nabla^{\eps}}
\DeclareMathOperator{\loc}{loc}
 \DeclareMathOperator{\supp}{supp}
 \DeclareMathOperator{\Ad}{Ad}
 \DeclareMathOperator{\ad}{ad}
 \DeclareMathOperator{\spa}{span}
 \DeclareMathOperator{\tr}{trace}
  \DeclareMathOperator{\SU}{SU}
     \DeclareMathOperator{\su}{su}
  \DeclareMathOperator{\GL}{GL}
 \DeclareMathOperator{\gl}{gl}
\begin{document}
\title[$C^{1,\alpha}$-subelliptic regularity]{$C^{1,\alpha}$-subelliptic regularity on $\SU (3)$ and compact, semi-simple Lie groups}
\author{Andr\'{a}s Domokos}
\address{Department of Mathematics and Statistics,
California State University Sacramento, 6000 J Street, Sacramento, CA, 95819, USA}
\email{domokos@csus.edu}
\author{Juan J. Manfredi}
\address{Department of Mathematics, University of Pittsburgh, Pittsburgh, PA, 15260}
\email{manfredi@pitt.edu}

\date{\today}

\keywords{compact, semi-simple Lie groups, Cartan sub-algebra, sub-elliptic PDE, regularity}

\subjclass[2010]{35J92, 35R03}

\begin{abstract} Let the vector fields $X_1, ... , X_{6}$ form an orthonormal basis of $\mathcal{H}$, the orthogonal complement of a Cartan subalgebra (of dimension $2$) in $\SU(3)$. We prove that weak solutions $u$  to  the degenerate   subelliptic $p$-Laplacian
$$ \Delta_{\mathcal{H},{p}} u(x)=\sum_{i=1}^{6} X_i^{*}\left(|\he u|^{p-2}X_{i}u \right) =0,$$
have H\"older continuous horizontal derivatives $\he u=(X_1u, \ldots, X_{6}u)$  for $p\ge 2$. 
\par
We also prove that a similar result holds for all compact connected semisimple Lie groups. 
\end{abstract}

\maketitle

\section{Introduction}\label{sec:1Intro}
Given a set of $m$ vector fields $X_1, X_2, \ldots,X_m$, in a domain $\Omega\subset\mathbb{R}^N$, where $m\le N$,   the horizontal gradient of a function $u\colon\Omega\mapsto\mathbb{R}$ is the vector field
$$\he
  u= X_1(u)X_{1}+X_2(u) X_{2}+ \ldots+X_m(u)X_{m}.$$
For $p\ge 1$ the horizontal Sobolev space $W_{\mathcal{H}}^{1,p}(\Omega)$ consists of functions $u$ for which we have
$$\|u\|_{W_{\mathcal{H}}^{1,p}(\Omega)}= \left(\int_{\Omega} \left(|\nabla_{\mathcal{H}}u(x)|^{p}+|u(x)|^{p}\, \right) dx \right)^{1/p}< \infty.$$
Here we have used $$ |\he u|= \left(\sum_{i=1}^{m} (X_{i}u)^{2}\right)^{1/2}.$$
As usual, we define  $W_{\mathcal{H}, 0}^{1,p}(\Omega)$ as the closure in  the $W_{\mathcal{H}}^{1,p}(\Omega)$-norm of the the smooth functions with compact support.  Given a function $F\in W_{\mathcal{H}}^{1,p}(\Omega)$, consider the variational problem
\begin{equation}\label{pvariational}
\inf_{u-F\in W_{\mathcal{H}, 0}^{1,p}(\Omega) }\int_{\Omega} |\nabla_{\mathcal{H}}u(x)|^{p}\, dx.
\end{equation}

When $p>1$ there exists a minimizer, that it is also unique when the vector fields satisfty the 
 H\"ormander condition
\begin{equation}\label{horcondition}
\text{rank Lie span}\{X_1, X_2, \ldots,X_m \}(x)=N\text{ for all } x\in \Omega,
\end{equation}
which we assume from now on.  Minimizers of \eqref{pvariational} are weak solutions of the \textit{subelliptic} or \textit{horizontal} $p$-Laplacian
\begin{equation}\label{plapequation}
 \Delta_{\mathcal{H},{p}} u(x)=\sum_{i=1}^m X_i^{*}\left(|\he u|^{p-2}X_{i}u \right) =0,
\end{equation}
where $X_{i}^{*}$ is the adjoint of $X_{i}$ with respect to the Lebesgue measure. Note that in the linear case $p=2$ we get $$ \Delta_{\mathcal{H},{2}} u(x)= \sum_{i=1}^m X_i^* X_{i}u (x).$$
If the dimension of the Lie algebra generated by $X_1, X_2, \ldots,X_m$ at each point $x$ is $N$ (H\"ormander's condition \eqref{horcondition}), then it is well-known that the operator $\Delta_{\mathcal{H},{2}}$ is hypoelliptic \cite{H67}. In fact, H\"ormander proved several estimates in $L^2$-fractional Sobolev spaces. These estimates were extended to more general $L^p$-fractional Sobolev and Besov space by Rothschild and Stein \cite{RS76}.  \par
In the quasilinear case $p\not =2$, when the non-degeneracy  and boundedness condition for the horizontal gradient
\begin{equation}\label{nondegenerateboundedness}	
0<\frac{1}{M} \le |\he u|(x) < M, \text{ for a. e. } x\in\Omega.
\end{equation}
is satisfied, Capogna \cite{C97,C99} proved that solutions to \eqref{plapequation} are $C^{\infty}$-smooth for the Heisenberg group, and Carnot groups, respectively.  The case of general semi-simple Lie groups follows from work done by us in \cite{DM09} for special classes of vector fields. 
\par
The situation is more complicated when we only assume  the non-degeneracy condition for the horizontal gradient
\begin{equation}\label{nondegenerate}	
0<\frac{1}{M} \le |\he u|(x), \text{ for a. e. } x\in\Omega \, .
\end{equation}
In this case the key step is to show first the boundedness of the horizontal gradient. In the case of the Heisenberg group this is due to Zhong \cite{Z17}, who extended the Hilbert-Haar theory to the Heisenberg group. Assuming \eqref{nondegenerate},  Ricciotti \cite{R15} proved $C^{\infty}$-smoothness of $p$-harmonic functions in the Heisenberg group for $1<p<\infty$. 
This result was extended to general  contact structures by using Riemannian approximations in  \cite{CCDO18}, which is the method we will extend below.\par
When condition \eqref{nondegenerate} is not assumed, we can only expect $C^{1,\alpha}$-regularity as in the Euclidean case. For the Heisenberg group this is indeed the case. See \cite{R18} for the case $p>4$, \cite{Z17} for $p>2$, and \cite{ZM17} for $1<p<\infty.$\par
The case of  general  contact structures is considered in  \cite{CCDO18}, where the $C^{1, \alpha}$-regularity of $p$-harmonic functions is obtained for $p\ge 2$.\par

In this paper, we consider first  the group $\SU(3)$ and second,  all compact, connected, semi-simple Lie groups,  and prove  that if $u$ is a solution of \eqref{plapequation} and $p\ge 2$,  then $\he u$ is H\"older continuous. As we shall explain below,  the dimension of the space of non-horizontal vectors fields, which turns out to be the dimension of the maximal torus, may be  greater than 1; thus,  it cannot support  a contact structure since  the dimension of the non-horizontal subspace is greater  than or equal to two. \par

We extend the Riemannian approximation method of \cite{CC16} to $\SU(3)$ (and general semisimple compact Lie groups) to get boundedness of the gradient, and build on the work of
\cite{MM07},\cite{ DM09}, \cite{MZGZ09}, \cite{Z17}, and \cite{CCDO18} to extend the regularity proof to  our case. Note that, as in the case if the previous contributions mentioned above, we don't have a nilpotent structure, so when we differentiate the equation we need to account for all commutators by relying on the root structure of the Lie algebra. \par

Given the technical character of the regularity proofs, we  present first the proof for $\SU(3)$ in full detail, and later indicate the minor modifications needed in the general case.

\section{Statements of the Main Results for $\SU(3)$}
\par The special unitary group of $3 \times 3$ complex matrices is defined by
$$\SU (3) = \{ g \in \GL (3, \comp) \, : \; g \cdot g^* = I \, , \; \det g = 1 \} \, ,$$
and its Lie algebra by
$$\su (3) = \{ X \in \gl (3, \comp) \, : \; X + X^* = 0 \, , \; \tr X =0 \} \, .$$
The inner product is defined by  a multiple of the Killing form
$$\langle X , Y \rangle = - \frac{1}{2} \tr ( X  Y ) \, .$$
We consider the two-dimensional maximal torus
$$\To = \left\{  \begin{pmatrix}  e^{ia_1} & 0 & 0 \\ 0 & e^{ia_2} & 0 \\ 0 & 0 & e^{ia_3} \end{pmatrix} \; : \; a_1, \, a_2, \, a_3 \in \real \, , \; a_1 + a_2 + a_3 = 0 \right\}$$
and its Lie algebra 
$$\T = \left\{  \begin{pmatrix} ia_1 & 0 & 0 \\ 0& ia_2 & 0 \\ 0&0& ia_3 \end{pmatrix} \; : \; a_1, \, a_2, \, a_3 \in \real \,  , \; a_1 + a_2 + a_3 = 0 \right\},$$
 which is our choice for the Cartan subalgebra.
The following are the Gell-Mann matrices, which form an
orthonormal basis of $\su (3)$:

\begin{align*}
&T_1 = \begin{pmatrix} -i  & 0 & 0 \\ 0& i  & 0 \\ 0&0& 0 \end{pmatrix}, 
&&T_2 = \begin{pmatrix} \frac{-i}{\sqrt{3}}  & 0 & 0 \\ 0& \frac{-i}{\sqrt{3}}  & 0 \\ 0&0& \frac{2i}{\sqrt{3}} \end{pmatrix}, \\[0.2cm]
&X_1 = \begin{pmatrix} 0  & 1 & 0 \\ -1 & 0  & 0 \\ 0&0& 0 \end{pmatrix}, 
&&X_2 = \begin{pmatrix} 0 & i & 0 \\  i & 0 & 0 \\ 0&0& 0 \end{pmatrix},
\\[0.2cm]
&X_3 = \begin{pmatrix} 0  & 0 & 0 \\ 0 & 0  & 1 \\ 0&-1& 0 \end{pmatrix}, 
&&X_4 = \begin{pmatrix} 0 & 0 & 0 \\  0 &  0 & -i \\ 0&-i& 0 \end{pmatrix},
\\[0.2cm]
&X_5 = \begin{pmatrix} 0  & 0 & 1 \\ 0 & 0  & 0 \\ -1&0& 0 \end{pmatrix},  
&&X_6 = \begin{pmatrix} 0 & 0 & i \\  0 &  0 & 0 \\ i&0& 0 \end{pmatrix}.
\end{align*}

For the method of Riemannian approximation, described in Section 3, the following two vector fields provide simpler calculations than $T_1$ and $T_2$. As it is described in Section \S 5, these are two of the positive roots.

\begin{equation*}
X_7 = -[X_1, X_2] = \begin{pmatrix} - 2i  & 0 & 0 \\ 0 & 2i  & 0 \\ 0&0& 0 \end{pmatrix}, \;
X_8 = -[X_3, X_4] = \begin{pmatrix} 0 & 0 & 0 \\  0 &  2i & 0 \\  0 & 0 & -2i \end{pmatrix}  \, .
\end{equation*}

We list all the commutators of the vector fields $X_1, ... , X_8$ in the next table.

{\footnotesize
\begin{table}[h]
\caption{Commutators in $\SU (3)$} 
\centering
\begin{tabular}{c||c|c|c|c|c|c|c|c}
~ & $X_1$ & $X_2$ & $X_3$ & $X_4$ & $X_5$ & $X_6$ & $\textcolor{red}{X_7}$ & $\textcolor{red}{X_8}$ \\ [0.5ex]
\hline \hline 
$X_1$ & $0$ & $-\textcolor{red}{X_7}$ & $\; \; \; X_5$ & $- X_6$ & $-X_3$ & $\; \; \;  X_4$ &  $4 X_2$ & $2 X_2$ \\[0.5ex]
\hline
$X_2$ & $\; \; \; \textcolor{red}{X_7}$ & $0$ & $\; \; \; X_6$ & $\; \; \; X_5$ & $ - X_4$ & $- X_3$ &  $-4 X_1$ & $-2X_1$ \\[0.5ex]
\hline
$X_3$ & $- X_5$ & $- X_6$ & $0$ & $-\textcolor{red}{X_8}$ & $\; \; \; X_1$ & $\; \; \; X_2$ &  $2X_4$ & $4X_4$ \\[0.5ex]
\hline
$X_4$ & $\; \; \; X_6$ & $- X_5$ & $ \textcolor{red}{X_8}$ & $0$ & $\; \; \; X_2$ & $- X_1$ &  $-2X_3$ & $ -4X_3$ \\[0.5ex]
\hline
$X_5$ & $\; \; \; X_3$ & $\; \; \; X_4$ & $-X_1$ & $-X_2$ & $0$ & $\textcolor{red}{X_8-X_7}$ &  $2X_6$ & $-2X_6$ \\[0.5ex]
\hline
$X_6$ & $-X_4$ & $\; \; \; X_3$ & $ - X_2$ & $\; \; \; X_1$ & $\textcolor{red}{X_7-X_8}$ & $0$ &  $-2X_5$ & $2X_5$ \\[0.5ex]
\hline
$\textcolor{red}{X_7}$ & $-4X_2$ & $4 X_1$ & $-2X_4$ & $2X_3$ & $-2X_6$ & $2X_5$ &  $0$ & $0$ \\[0.5ex]
\hline
$\textcolor{red}{X_8}$ & $-2X_2$ & $2X_1$ & $-4X_4$ & $4X_3$ & $2X_6$ & $-2X_5$ &  $0$ & $ 0$ \\[0.5ex]
\hline

\end{tabular}
\label{tab:hresult}
\end{table} }


In case of $\SU (3)$ the orthonormal basis for the horizontal subspace $\Ho$ is
$${\mathcal B}_{\Ho} = \{ X_1, X_2 , X_3, X_4, X_5, X_6 \} \, .$$
The commutation properties in Table \ref{tab:hresult} show that, by identifying $\Lie$ with the Lie algebra of left-invariant vector fields, 
${\mathcal B}_{\Ho}$ satisfies the H\"ormander condition and 
generates the horizontal distribution of a sub-Riemannian manifold.

Recall that the curve $\gamma : [0,T] \to \G$ is  subunitary  associated to ${\mathcal B}_{\Ho}$ if $\gamma$ is an absolutely continuous function, such that for all $i \in \{1,...,6\}$ there exists $\alpha_i \in L^{\infty} [0,T]$ with the properties 
$$\gamma'(t) = \sum_{i=1}^6 \alpha_i (t) \, X_i (\gamma(t)) \, , \; 
\sum_{i=1}^6 \alpha_i^2 (t) \leq 1 \, , \; \text{a.e.} \; t \in [0,T] \, .$$ 
The control distance (Carnot-Carath\'{e}odory distance) with respect to ${\mathcal B}_{\Ho}$  is defined by  
\begin{multline}\label{def:distance}
d(x,y) = \inf \{ T \geq 0 \, : \; \text{there exists $\gamma :[0,T] \to \G$, 
a subunitary curve}\\ \text{for ${\mathcal B}_{\Ho}$, connecting $x$ and $y$} \} \, .
\end{multline} 
We use $B_r$ for the Carnot-Carath\'{e}odory balls of radius $r$ generated by 
$d$.\\

Let us fix a bi-invariant Haar-measure and note that for left-invariant vector fields we always have $X_i^* = -X_i$. 
Consider a domain $\Omega \subset \SU(3)$, and the following quasilinear subelliptic equation:
\begin{equation} \label{eq:PDE1}
\sum_{i=1}^6 X_i \left( a_i (\he u) \right) = 0 \, , \;
 \mbox{in} \; \Omega \; ,
\end{equation} 
where for some $0 \leq \delta \leq 1$, $p > 1$ , $0 < l < L$, and for all $\eta, \xi \in {\mathbb R}^{6}$ the following properties hold:
\begin{align}\label{elliptic1}
\sum_{i,j=1}^6 \frac{\partial a_i}{\partial \xi_j}(\xi ) \;
\eta_i \eta_j &
\geq l \Bigl( \delta + |\xi|^2 \Bigr)^{\frac{p-2}{2}} |\eta|^2 \, ,\\\label{elliptic2}
\sum_{i,j=1}^6 \left| \frac{\partial a_i}{\partial \xi_j} (\xi
)\right|
& \leq L \Bigl( \delta + |\xi|^2 \Bigr)^{\frac{p-2}{2}} \, ,\\\label{elliptic3}
|a_i (\xi )| & \leq  L \left( \delta + |\xi |^2
\right)^{\frac{p-1}{2}} \, .
\end{align}
The quintessential  representative example for the functions $a_i$ is given by 
$$a_i (\xi) = (\delta + |\xi|^2 )^{\frac{p-2}{2}} \xi_i \, .$$


A function $u \in W^{1,p}_{\Ho, \loc}(\Omega )$ is a weak solution of \eqref{eq:PDE1} if
\begin{equation}
\sum_{i=1}^6 \int_{\Omega } a_i ( \he u (x)) \; X_i \phi (x)
\,  dx = 0 \, , \; \text{for all $ \phi \in C_0^{\infty}(\Omega )$.}
\end{equation}

We list our main results:

\begin{theorem} \label{thm:Lipschitz}
Let $p > 1$ and $u \in W_{\Ho,\loc}^{1,p} (\Omega )$ be a weak solution of \eqref{eq:PDE1}.
Then there exists a constant $c>0$, depending only on $p, l, L$, such that for any Carnot-Carath\'eodory  ball $B_r \subset \subset \Omega$ we have 
\begin{equation}\label{ineq:Linfinity}
\sup_{B_{r/2}} |\he u | \leq c \left( -\hspace{-0.45cm}\int_{B_r} (\delta + |\he u |^2 )^{\frac{p}{2}} dx \right)^{\frac{1}{p}} \, .
 	\end{equation}

\end{theorem}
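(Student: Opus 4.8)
The plan is to run the Riemannian (elliptic) approximation scheme and then Moser iteration on the differentiated equation, keeping all constants independent of the approximation parameter. Fix small $\eps>0$, set $\nae u=(X_1u,\dots,X_6u,\eps X_7u,\eps X_8u)$, and let $\ue$ be the weak solution, with the same boundary data as $u$ on a slightly smaller ball, of
$$\sum_{i=1}^{8}X_i^{\eps}\bigl(a_i^{\eps}(\nae\ue)\bigr)=0,\qquad X_i^{\eps}=X_i\ (i\le6),\ \ X_i^{\eps}=\eps X_i\ (i=7,8),$$
where $a_i^{\eps}\colon\mathbb{R}^{8}\to\mathbb{R}$ extend the $a_i$ and satisfy \eqref{elliptic1}--\eqref{elliptic3} (with $\mathbb{R}^{6}$ replaced by $\mathbb{R}^{8}$) uniformly in $\eps$; the model case is $a_i^{\eps}(\xi)=(\delta+|\xi|^2)^{(p-2)/2}\xi_i$ for all eight components. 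Because the frame $X_1^{\eps},\dots,X_8^{\eps}$ spans the whole tangent space with smooth coefficients, classical elliptic regularity (as in \cite{C97,C99}) gives $\ue\in C^{\infty}$, so all the manipulations below are legitimate; standard energy comparison yields $\ue\to u$ in $W^{1,p}_{\Ho,\loc}$ and $\nae\ue\to\he u$ as $\eps\to0$ (see \cite{CC16}). It therefore suffices to prove \eqref{ineq:Linfinity} for $\ue$ with a constant depending only on $p,l,L$ and structural constants of $\SU(3)$.

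Next I differentiate the regularized equation along each frame direction $X_s$, $s=1,\dots,8$. With $v=X_s\ue$, using the commutation relations of Table~\ref{tab:hresult} to interchange $X_s$ with the $X_i$, one gets a linear equation for $v$ of the form $\sum_{i,j}X_i\bigl(b_{ij}\,X_jv\bigr)=\mathcal{E}_s$, where $b_{ij}=\partial a_i^{\eps}/\partial\xi_j(\nae\ue)$ satisfies \eqref{elliptic1}--\eqref{elliptic2} with weight $(\delta+|\nae\ue|^2)^{(p-2)/2}$, and the error $\mathcal{E}_s$ is a sum of products of the $b_{ij}$ with first-order derivatives $X_k\ue$ arising from the structure constants. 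The \emph{essential} subtlety, and the point where the absence of a nilpotent/Carnot structure bites, is that $\mathcal{E}_s$ does not vanish and is of the same order as the principal part, so it must be reabsorbed. The mechanism is the one used in \cite{MM07,DM09,MZGZ09,Z17,CCDO18}: the bad directions are the torus directions, and from $X_7=-[X_1,X_2]$, $X_8=-[X_3,X_4]$ one controls $|X_7\ue|,|X_8\ue|$ pointwise by horizontal second derivatives $|X_iX_j\ue|$, which in turn are controlled by the Caccioppoli quantity; the $\eps$-weights carried by $X_7^{\eps},X_8^{\eps}$ ensure that the residual genuinely non-subelliptic terms come with a harmless factor of $\eps$.

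Then comes the Caccioppoli inequality. Testing the equation for $v=X_s\ue$ with $\eta^2v\,(\delta+|\nae\ue|^2)^{\gamma}$ (and with $\eta^2(\delta+|\nae\ue|^2)^{\gamma}v_{\pm}$ to deal with signs), summing over $s$, using \eqref{elliptic1} from below, \eqref{elliptic2} and Young's inequality on the commutator and cutoff terms, and the horizontal-second-derivative control of the torus derivatives just described, yields for every $\gamma\ge0$ and concentric balls $B_\rho\subset B_R\subset\subset\Omega$ a reverse-Poincar\'e estimate
\[
\int_{B_\rho}\bigl(\delta+|\nae\ue|^2\bigr)^{\frac{p-2}{2}+\gamma}\bigl|\nae(\nae\ue)\bigr|^2\,dx
\;\le\;\frac{C(\gamma)}{(R-\rho)^2}\int_{B_R}\bigl(\delta+|\nae\ue|^2\bigr)^{\frac{p}{2}+\gamma}\,dx ,
\]
with $C(\gamma)$ polynomial in $\gamma$ and independent of $\eps$. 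Combining this with the subelliptic Sobolev inequality on Carnot--Carath\'eodory balls (available since $\{X_1,\dots,X_6\}$ satisfies H\"ormander's condition; write $\chi=Q/(Q-2)>1$ for the associated gain, $Q$ the local homogeneous dimension), applied to $w=(\delta+|\nae\ue|^2)^{(p+2\gamma)/4}$, turns the estimate into a reverse-H\"older inequality between the $L^{\chi q}$ and $L^{q}$ norms of $(\delta+|\nae\ue|^2)^{1/2}$ on $B_\rho$ and $B_R$. Moser iteration over a sequence of balls shrinking from $B_r$ to $B_{r/2}$, using $\sum_k C(\gamma_k)\chi^{-k}<\infty$, then gives $\sup_{B_{r/2}}(\delta+|\nae\ue|^2)^{1/2}\le c\,\bigl(-\hspace{-0.45cm}\int_{B_r}(\delta+|\nae\ue|^2)^{p/2}\,dx\bigr)^{1/p}$ with $c=c(p,l,L)$, and letting $\eps\to0$ yields \eqref{ineq:Linfinity}.

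I expect the main obstacle to be Steps two and three: organizing the commutator terms $\mathcal{E}_s$ produced by differentiating the equation so that, after trading the torus derivatives $X_7\ue,X_8\ue$ for horizontal second derivatives via $X_7=-[X_1,X_2]$, $X_8=-[X_3,X_4]$, every resulting term is either absorbed into the left-hand side by Young's inequality (at the controlled cost of enlarging $C(\gamma)$ polynomially in $\gamma$) or carries a power of $\eps$, while keeping the bookkeeping of the $(\delta+|\nae\ue|^2)$-exponents exactly right so that the iteration closes. The remaining ingredients -- the $\eps\to0$ limit, the doubling property, and the subelliptic Sobolev/Poincar\'e inequalities -- are standard for H\"ormander systems.
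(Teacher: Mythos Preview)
Your overall architecture matches the paper: Riemannian approximation with the frame $X_i^{\eps}$, differentiate the regularized equation, derive a weighted Caccioppoli for $\we=\delta+|\nae\ue|^2$, Moser iterate, and let $\eps\to0$. The gap is in Step~3, exactly where you flag the obstacle, and your proposed mechanism for closing it does not work.

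When you differentiate along a horizontal $X_s$ ($s\le6$), the commutators $[X_1^{\eps},X_2^{\eps}]=-\tfrac{1}{\eps}X_7^{\eps}$, $[X_3^{\eps},X_4^{\eps}]=-\tfrac{1}{\eps}X_8^{\eps}$, $[X_5^{\eps},X_6^{\eps}]=\tfrac{1}{\eps}(X_8^{\eps}-X_7^{\eps})$ produce terms with factor $1/\eps$, not $\eps$; the $\eps$-weights do \emph{not} render these harmless. After testing with $\eta^2\we^{\gamma}X_s\ue$, summing in $s$ and integrating by parts (this is Lemma~\ref{lemma:vertical2} in the paper), what survives on the right is
\[
c(\gamma+1)^4\int_{\Omega}\eta^2\,\we^{\frac{p-2}{2}+\gamma}\,|\vv\ue|^2\,dx,
\]
with a coefficient that is \emph{not} small. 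Your fix---replacing $|\vv\ue|$ pointwise by $|\nae\nae\ue|$ via $X_7=-[X_1,X_2]$, $X_8=-[X_3,X_4]$---only rewrites this as $c(\gamma+1)^4$ times the left-hand side, which cannot be absorbed for any $\gamma\ge0$. The paper closes this loop by a separate bootstrap: a vertical Caccioppoli (Lemma~\ref{lemma:vertical1}) and two further estimates (Lemmas~\ref{lemma:vertical3}--\ref{lemma:vertical4}) for the mixed quantities $\int\eta^{2\beta+2}\we^{(p-2)/2}|\ve\ue|^{2\beta}|\nae\nae\ue|^2$, iterated in $\beta$; only after a H\"older/Young interpolation against $\int_{\supp\eta}\we^{p/2+\gamma}$ does the coefficient in front of the absorbable piece become $\tfrac{1}{\gamma+1}$, yielding \eqref{ineq:vertical5}. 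That interpolation step, not a single Young's inequality with polynomial loss, is the missing idea in your proposal.
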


\begin{theorem} \label{thm:GradientHolder}
Let $p \geq 2$ and $u \in W_{\Ho,\loc}^{1,p} (\Omega )$ be a weak solution of \eqref{eq:PDE1}. Then $\he u \in C^{\alpha}_{\loc} (\Omega)$.
\end{theorem}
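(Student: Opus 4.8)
The plan is to prove uniform $C^{1,\alpha}$ estimates for a Riemannian regularization of \eqref{eq:PDE1} and then pass to the limit, extending the approximation scheme of \cite{CC16,CCDO18} to the semisimple (non-nilpotent) setting. Fix a Carnot--Carath\'eodory ball $B_r\subset\subset\Omega$ and, using an orthonormal frame of the torus $\T$ (the computations being cleaner in the non-orthonormal frame $X_7,X_8$ of Table~\ref{tab:hresult}), set $\nae=(X_1,\dots,X_6,\eps X_7,\eps X_8)$ for $\eps\in(0,1]$. Let $\ue$ solve
\[ \sum_{i=1}^6 X_i\bigl(a_i^{\eps}(\nae\ue)\bigr)+\eps\sum_{j=7}^8 X_j\bigl(a_j^{\eps}(\nae\ue)\bigr)=0 \]
on $B_r$ with boundary data $u$, where $a^{\eps}$ is a mollification of $a$ satisfying \eqref{elliptic1}--\eqref{elliptic3} on $\real^8$ with $\delta+\eps^2$ in place of $\delta$. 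For each fixed $\eps>0$ this operator is uniformly elliptic in all eight directions, so $\ue\in C^\infty(B_r)$ and, by uniform energy bounds, $\ue\to u$ in $W^{1,p}_{\Ho,\loc}(\Omega)$ as $\eps\to0$; it therefore suffices to prove $\|\nae\ue\|_{C^\alpha(B_{r/8})}\le C$ with $C,\alpha>0$ independent of $\eps$.

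First, the Riemannian analogue of Theorem~\ref{thm:Lipschitz} gives $\sup_{B_{r/2}}|\nae\ue|\le C$ uniformly in $\eps$. Next, differentiating the equation by $X_s$ ($s=1,\dots,8$) and commuting $X_sX_i=X_iX_s+[X_s,X_i]$ with the brackets read off Table~\ref{tab:hresult}, the function $w_s:=X_s\ue$ solves a uniformly elliptic divergence-form equation
\[ \sum_{i,j}X_i\Bigl(\tfrac{\partial a_i^{\eps}}{\partial\xi_j}(\nae\ue)\,X_j w_s\Bigr)=\sum_i X_i(E_{s,i})+F_s, \]
where $E_{s,i}$ and $F_s$ are bounded linear combinations — with coefficients the structure constants of Table~\ref{tab:hresult} — of the quantities $\tfrac{\partial a_i^{\eps}}{\partial\xi_j}(\nae\ue)X_k\ue$ and $a_i^{\eps}(\nae\ue)$; these are the lower-order corrections forced by the absence of a nilpotent structure. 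Testing with $\phi=\eta^2 w_s(\delta+\eps^2+|\nae\ue|^2)^{\beta}$, summing over $s$, and using \eqref{elliptic1}--\eqref{elliptic3} together with the Lipschitz bound to absorb the correction terms, one obtains for $p\ge2$ the second-order Caccioppoli estimate
\[ \int_{B_{r/2}}\eta^2(\delta+\eps^2+|\nae\ue|^2)^{\frac{p-2}{2}}|\nae(\nae\ue)|^2\,dx\le C\int_{B_{r/2}}(|\nabla\eta|^2+1)(\delta+\eps^2+|\nae\ue|^2)^{\frac{p}{2}}\,dx, \]
uniform in $\eps$, which a Moser iteration on the differentiated equation upgrades to higher integrability of $\nae(\nae\ue)$ on $B_{r/4}$.

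To get the H\"older bound, fix $B_\rho\subset B_{r/4}$, put $\mu=\sup_{B_\rho}|\nae\ue|$, and write $(\cdot)_B$ for the average over $B$. In the non-degenerate regime $\mu^2\le c_0(\delta+\eps^2+\inf_{B_\rho}|\nae\ue|^2)$ the operator is uniformly elliptic with ellipticity $\simeq(\delta+\eps^2+\mu^2)^{\frac{p-2}{2}}$; freezing the coefficients at $(\nae\ue)_{B_\rho}$ and comparing $\ue$ with the solution $h$ of the constant-coefficient operator in the frame $\nae$ with the same data on $B_{\rho/2}$, the linear subelliptic estimates of H\"ormander and Rothschild--Stein \cite{H67,RS76} — carried out on the $\eps$-anisotropic frame with $\eps$-independent constants — give $\tfrac{1}{|B_\tau|}\int_{B_\tau}|\nae h-(\nae h)_{B_\tau}|^2\le C(\tau/\rho)^2\,\tfrac{1}{|B_\rho|}\int_{B_\rho}|\nae h-(\nae h)_{B_\rho}|^2$, while the Caccioppoli estimate controls $\int_{B_{\rho/2}}|\nae(\ue-h)|^2$; together these yield, for $\tau\le\rho/2$,
\[ \frac{1}{|B_\tau|}\int_{B_\tau}|\nae\ue-(\nae\ue)_{B_\tau}|^2\,dx\le C\bigl((\tau/\rho)^2+\sigma\bigr)\frac{1}{|B_\rho|}\int_{B_\rho}|\nae\ue-(\nae\ue)_{B_\rho}|^2\,dx+\text{(controlled error)}, \]
with $\sigma$ small. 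In the degenerate regime the Caccioppoli estimate feeds a De Giorgi iteration for the subsolution $|\nae\ue|^2$, giving $\sup_{B_{\rho/2}}|\nae\ue|\le(1-\theta)\mu$ for a fixed $\theta\in(0,1)$, so the oscillation of $\nae\ue$ contracts by a definite factor. Iterating this dichotomy over dyadic Carnot--Carath\'eodory balls (whose volumes scale like $\tau^{Q}$, with $Q$ the homogeneous dimension) yields a Campanato--Morrey decay
\[ \frac{1}{|B_\tau|}\int_{B_\tau}|\nae\ue-(\nae\ue)_{B_\tau}|^2\,dx\le C\Bigl(\tfrac{\tau}{r}\Bigr)^{2\alpha},\qquad \tau\le r/8, \]
with $C,\alpha>0$ independent of $\eps$; by the Campanato characterization of H\"older continuity in the sub-Riemannian space this is the desired uniform estimate, and letting $\eps\to0$ gives $\he\ue\to\he u$ locally uniformly with $\he u\in C^\alpha_{\loc}(\Omega)$.

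The hard part, exactly what is new compared with the Carnot-group case, is the treatment of the correction terms $E_{s,i},F_s$: on a semisimple group, differentiating the equation brings down — through the brackets $[X_7,X_i]$, $[X_8,X_i]$ and $[X_i,X_j]$ of Table~\ref{tab:hresult} — genuine horizontal first-order terms coupling all six horizontal directions, and these must be absorbed into the good second-order term of the Caccioppoli inequality; for $p>2$ this forces a careful calibration of the exponent $\beta$ in the test function and of the interpolation arguments of \cite{MZGZ09,Z17}. A second, more technical, obstacle is keeping every constant in the frozen-coefficient comparison and in the linear decay estimate uniform as $\eps\to0$; this is handled by performing the H\"ormander/Rothschild--Stein estimates in the $\eps$-anisotropic frame — where the torus directions $X_7,X_8$ carry the weight $\eps$ — while tracking explicitly the dependence on the structure constants of $\su(3)$.
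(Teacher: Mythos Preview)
Your proposal takes a substantially different route from the paper. After the Lipschitz bound, the paper drops the $\eps$-approximation: for $\delta>0$ solutions of \eqref{eq:PDE1} are $C^\infty$ by Theorem~\ref{thm:smooth} (via \cite{DM09}), so one works intrinsically in the sub-Riemannian setting. Letting $\eps\to0$ in the lemmas of Section~\ref{sec:5SU(3)} yields the horizontal Caccioppoli inequalities of Corollary~\ref{cor:Cacciopoli1}, and the core of the argument is Lemma~\ref{lemma:DeGiorgi}: a level-set Caccioppoli inequality that places each $X_su$ in a De~Giorgi class, after which H\"older continuity is classical. The real work there is controlling the mixed term $A_0=c_0\int\eta^2 w^{(p-2)/2}|\he\vv u|\,v\,dx$, carried out through an iterated bootstrap on the auxiliary quantities $\Gamma_\beta=\int\eta^2 w^{(p-2)/2}|\he\vv u|^2|\vv u|^\beta v^2\,dx$ and $\Lambda_\beta=\int\eta^2 w^{p/2}|\vv u|^\beta v^2\,dx$; this is where the full commutator table is exploited, and nothing of this kind appears in your outline.

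Your Campanato scheme instead keeps the $\eps$-approximation to the end and, in the non-degenerate alternative, compares with the frozen constant-coefficient operator $\sum A_{ij}X_i^\eps X_j^\eps$. The decay you need for the comparison function $h$ is an $\eps$-uniform Campanato estimate for this linear operator on $\SU(3)$, and attributing it to \cite{H67,RS76} is not right: those papers give hypoellipticity and Sobolev bounds for a \emph{fixed} H\"ormander system, not constants stable under the anisotropic rescaling $X_7^\eps=\eps X_7$, $X_8^\eps=\eps X_8$. Producing such uniform linear estimates is precisely the content that \cite{CC16,CCDO18} had to build from scratch in the Heisenberg and contact cases; with a two-dimensional vertical space here it is not a formality, and without it your non-degenerate branch has no reference decay and the dichotomy cannot close. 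The paper's De~Giorgi route is chosen to avoid this entirely: no linear comparison problem, no $\eps$-uniform Schauder theory, only the intrinsic inequalities \eqref{ineq:Cacciopoli1}--\eqref{ineq:Cacciopoli5} and the $\Gamma_\beta$--$\Lambda_\beta$ iteration internal to Lemma~\ref{lemma:DeGiorgi}.
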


\section{The proof of Theorem \ref{thm:Lipschitz}}\label{sec:5SU(3)}

Consider an arbitrary, but fixed $0 < \eps < 1$. Define the following vector fields:
\begin{itemize}
	\item For $i \in \{1,...,6\}$ define $X_i^{\eps} = X_i.$
	\item For $i \in \{7 ,8\}$ define $X_i^{\eps} = \eps X_i$.
\end{itemize}
Regarding the behavior as $\eps \to 0$, we have three types of commutators:
\begin{equation}\label{epsiloncom}
\begin{split}
& [\xe_1 , \xe_2 ] = -\frac{1}{\eps} \xe_7	, \; \; [\xe_3, \xe_4 ] =- \frac{1}{\eps} \xe_8, \; \; [\xe_5, \xe_6 ] = \frac{1}{\eps} (\xe_8-\xe_7 )\\
& [\xe_7 , \xe_1 ] = -4\eps \xe_2 , ... , \; [\xe_7 , \xe_3 ] = -2\eps \xe_4 , ... , [\xe_8 , \xe_1 ] = -2\eps\xe_2, ...\\
& [\xe_1 , \xe_3 ] = \xe_5, \; \; [\xe_1, \xe_4 ] = - \xe_6, ... , \; [\xe_2, \xe_3 ] = \xe_6, ...
\end{split}
\end{equation}

We will use the following notations:
\begin{itemize}
\item $\vv = (X_7, X_8 )$, $\he = (X_1, ... , X_6 )$. 
\item $\ve = (\xe_7, \xe_8 )$, $\nabla^{\eps} = (\xe_1 , ... , \xe_6,\xe_7,\xe_8 )$.
\item $\we = \delta + |\nabla^{\eps} \ue |^2$. 	
\end{itemize}
We can always extend the vector function $(a_1, ... , a_6 )$ to 
$(a_1, ... , a_8 )$ in such a way that we keep the properties
\eqref{elliptic1}, \eqref{elliptic2} and \eqref{elliptic3}. Consider the quasilinear elliptic PDE, which will serve as a Riemannian approximation of \eqref{eq:PDE1}:
\begin{equation}\label{eq:PDE2}
\sum_{i=1}^8 X_i^{\eps} (a_i (\nae u )) = 0 , \; \; \text{in} \; \; \Omega .
\end{equation}

\begin{remark}
If $\delta >0$ and $\eps >0$, the weak solutions of the non-degenerate quasilinear elliptic equation \eqref{eq:PDE2} are smooth in $\Omega$ by classical regularity theory. See for example \cite{LU68}.
\end{remark}

The series of lemmas that follow contain generalizations of the Cacciopoli-type inequalities that  were developed and gradually refined in the case of Heisenberg group in \cite{MM07,MZGZ09,Z17, R15, CCDO18}.  
\begin{lemma}\label{lemma:vertical1}
Let $0< \delta <1$, $\beta \geq 0$ and $\eta \in C_0^{\infty} (\Omega )$ be such that $0 \leq \eta \leq 1$.
Then there exists a constant $c>0$ depending only on  $p$, $l$ and $L$ such that for any solution $\ue \in C^{\infty} (\Omega )$ of \eqref{eq:PDE2} we have
\begin{multline}\label{ineq:vertical1}
\int_{\Omega} \eta^2 \, \we^{\frac{p-2}{2}} \, |\ve \ue|^{2\beta} \,
|\nae \ve \ue |^2 \, dx \\
\leq c \int_{\Omega} |\nae \eta|^2 \,  \we^{\frac{p-2}{2}} \, |\ve \ue |^{2\beta+2} \, dx\\
 + c \eps^2 (\beta + 1)^2 \int_{\Omega}	\eta^2 \, \we^{\frac{p}{2}} \, |\ve \ue|^{2\beta} \, dx .
\end{multline}
\end{lemma}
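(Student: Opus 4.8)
The plan is to use $\varphi = \eta^2 \, \we^{\frac{p-2}{2}} \, |\ve \ue|^{2\beta} \, X_k \ue$ (summed over $k \in \{7,8\}$) as a test function in the weak formulation of the Riemannian approximation \eqref{eq:PDE2}. Since $\ue$ is smooth, this is legitimate, and the idea is to differentiate the equation along the vertical directions $X_7, X_8$. Concretely, I would first apply $X_k$ to \eqref{eq:PDE2}, which produces $\sum_i X_i^{\eps}\bigl(\sum_j \frac{\partial a_i}{\partial \xi_j}(\nae \ue)\, X_k X_j^{\eps}\ue\bigr)$ plus commutator terms coming from $[X_k, X_i^{\eps}]$ acting on both the outer derivative and inside $a_i$. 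Then I would test the resulting identity against $\eta^2\,\we^{\frac{p-2}{2}}\,|\ve\ue|^{2\beta}\,X_k\ue$, sum over $k$, and integrate by parts to move one derivative onto the test function.

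The key steps, in order: (1) write out the differentiated equation carefully, tracking all commutators using Table~\ref{tab:hresult} and the scaling \eqref{epsiloncom} — note that $[X_k, X_i^{\eps}]$ for $k\in\{7,8\}$ and $i\in\{1,\dots,6\}$ carries a factor $\eps$, while $[X_k, X_7^{\eps}]=[X_k,X_8^{\eps}]=0$, which is exactly why the vertical directions are the good ones; (2) on the left side, the ellipticity lower bound \eqref{elliptic1} applied with $\eta_i = X_k X_i^{\eps}\ue$ yields the leading term $\eta^2\,\we^{\frac{p-2}{2}}\,|\ve\ue|^{2\beta}\,|\nae\ve\ue|^2$ up to the harmless differentiation of $\we^{\frac{p-2}{2}}|\ve\ue|^{2\beta}$, which can be absorbed (here $p\ge 2$, or really $\delta>0$ with $p>1$, keeps $\we^{\frac{p-2}{2}}$ well-behaved, and the $(\beta+1)$ factors from differentiating $|\ve\ue|^{2\beta}$ get tracked); (3) on the right side, the terms where the derivative lands on $\eta$ give, via \eqref{elliptic2} and Cauchy–Schwarz, the first term $\int |\nae\eta|^2\,\we^{\frac{p-2}{2}}\,|\ve\ue|^{2\beta+2}$; (4) the commutator terms, each carrying an explicit $\eps$ and a linear factor in $\nae\ue$, combine with the test function to produce, after Young's inequality splitting off a small multiple of the leading term, the second term $\eps^2(\beta+1)^2\int\eta^2\,\we^{\frac{p}{2}}\,|\ve\ue|^{2\beta}$; the $(\beta+1)^2$ comes from the worst commutator contributions interacting with the derivative of $|\ve\ue|^{2\beta}$.

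The main obstacle I anticipate is the bookkeeping in step (4): the commutators $[X_k, X_i^{\eps}]\ue$ appear both from commuting past the outer $X_i^{\eps}$ and from the argument of $\frac{\partial a_i}{\partial\xi_j}$, and one has to check that every such term is either (a) of order $\eps$ times something controlled by $\we^{1/2}$, or (b) absorbable into the leading $|\nae\ve\ue|^2$ term. A subtle point is that $X_k X_j^{\eps}\ue$ for $j\in\{1,\dots,6\}$ differs from $X_j^{\eps}X_k\ue$ by $\eps$ times a horizontal derivative of $\ue$ (again from \eqref{epsiloncom}), so $|\nae\ve\ue|$ and $|\ve\nae\ue|$ agree up to lower-order $\eps$-terms — this equivalence must be invoked to identify the left-hand side correctly. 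Beyond this, the estimate is a fairly standard Caccioppoli iteration in the spirit of \cite{MM07,Z17,CCDO18}; one just has to be disciplined about which constants depend only on $p, l, L$ (they do, since the structural constants from the finitely many commutators in Table~\ref{tab:hresult} are universal) and carry the $\eps^2(\beta+1)^2$ through cleanly.
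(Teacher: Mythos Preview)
Your overall strategy---differentiate \eqref{eq:PDE2} in the vertical directions $X_7^\eps, X_8^\eps$, exploit that $[X_7,X_8]=0$ and that $[X_k,X_i^\eps]$ carries an explicit factor $\eps$ for $i\le 6$, then test and absorb via Young---is exactly the paper's approach. However, your test function is wrong: you propose $\varphi = \eta^2\,\we^{(p-2)/2}\,|\ve\ue|^{2\beta}\,X_k\ue$, whereas the correct choice (the paper's) is $\phi = \eta^2\,|\ve\ue|^{2\beta}\,X_k^\eps\ue$ \emph{without} the weight $\we^{(p-2)/2}$. The factor $\we^{(p-2)/2}$ on the left of \eqref{ineq:vertical1} comes from the ellipticity bound \eqref{elliptic1} on $a_{ij}$, not from the test function. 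With your choice you would (a) produce $\we^{p-2}$ rather than $\we^{(p-2)/2}$ in the leading term, and (b) more seriously, when $X_i^\eps$ lands on $\we^{(p-2)/2}$ you pick up a factor $\we^{(p-4)/2}\langle\nae\ue,X_i^\eps\nae\ue\rangle$ involving the \emph{full} Hessian $\nae\nae\ue$, not just the vertical piece $\nae\ve\ue$. That term is not ``harmless'' and cannot be absorbed into the right side of \eqref{ineq:vertical1}; controlling $|\nae\nae\ue|^2$ is the content of Lemma~\ref{lemma:vertical2}, not this one.

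Once the extra weight is dropped, your steps (1)--(4) line up with the paper: the term where $X_i^\eps$ hits $X_k^\eps\ue$ gives the coercive piece via \eqref{elliptic1}; the term from differentiating $|\ve\ue|^{2\beta}$ has a \emph{good sign} (not merely absorbable); the term from differentiating $\eta^2$ gives the first integral on the right after Young; and the commutator terms on the right all carry $\eps$ and are bounded by $c\eps(\beta+1)\,\we^{(p-1)/2}\eta^2|\ve\ue|^{2\beta}|\nae\ve\ue|$ or $c\eps\,\we^{(p-1)/2}\eta|\nae\eta||\ve\ue|^{2\beta+1}$, which after Young yield exactly the $\eps^2(\beta+1)^2\int\eta^2\we^{p/2}|\ve\ue|^{2\beta}$ term.
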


\begin{proof}
In order to accommodate all the terms, we will simplify the writing of \eqref{eq:PDE2}:
\begin{equation}\label{eq:PDE3}
\sum_i X_i^{\eps} (a_i) = 0 \, .
\end{equation}
By differentiating \eqref{eq:PDE3} with respect to $\xe_7$ and
switching $\xe_7$ and $\xe_i$ we get
\begin{multline*}
\sum_i \xe_i (\xe_7 (a_i)) = 4\eps \xe_2 (a_1) -4 \eps \xe_1 (a_2) + 2\eps \xe_4 (a_3) -2\eps \xe_3 (a_4) \\+2\eps \xe_6 (a_5)-2\eps\xe_5 (a_6) .
\end{multline*}
Using the notation $a_{ij} = \frac{\partial a_i}{\partial \xi_j}$, for any $\phi \in C_0^{\infty} (\Omega )$ we get
$$\sum_{i,j} \int_{\Omega} a_{ij} \, \xe_7 \xe_j\ue \, \xe_i \phi \, dx = 4 \eps \int_{\Omega} a_1 \, \xe_2 \, \phi \, dx + \; \; \text{similar terms} .$$
Another switch between $\xe_7$ and $\xe_j$ leads to
\begin{multline}\label{eq:PDE4}
\sum_{i,j} \int_{\Omega} a_{ij}	\, \xe_j\xe_7\ue \, \xe_i\phi \, dx \\
= 4 \eps \int_{\Omega} a_1 \, \xe_2\phi \, dx + \; \text{similar terms}\\
+4 \eps \sum_i \int_{\Omega} a_{i1} \, \xe_2\ue \, \xe_i \phi \, dx + \; \text{similar terms}.
\end{multline}

Let us use $\phi = \eta^2 \, |\ve \ue|^{2\beta} \, \xe_7\ue$ in \eqref{eq:PDE4}. 
Then,
\begin{multline*}
\xe_i \phi = 2\eta \, \xe_i\eta \, |\ve \ue |^{2\beta} \, \xe_7 \ue\\ + \eta^2 \, \beta \, |\ve \ue |^{2\beta-2} \, \xe_i (|\ve\ue|^2) \, \xe_7\ue \\
+\eta^2 |\ve\ue|^{2\beta} \, \xe_i\xe_7\ue \, ,
\end{multline*}                                                                                                           
and hence
\begin{multline}\label{eq:diffx3}
\sum_{i,j} \int_{\Omega} a_{ij} \, \xe_j \xe_7\ue \, 2 \eta \xe_i \eta \, |\ve\ue|^{2\beta}\, \xe_7\ue\, dx\\
+ \sum_{i,j} \int_{\Omega} a_{ij} \, \xe_j \xe_7 \ue \, \eta^2 \, \beta |\ve\ue|^{2\beta-2} \, \xe_i (|\ve\ue|^2)\, \xe_7 \ue \, dx\\
+ \sum_{i,j} \int_{\Omega} a_{ij} \, \xe_j \xe_7\ue \, \eta^2 \, |\ve\ue|^{2\beta}\, \xe_i\xe_7\ue\, dx\\
\shoveleft{ = 4\eps \int_{\Omega}} a_1 \, 2\eta \, \xe_2\eta \, |\ve\ue|^{2\beta} \, \xe_7\ue \, dx\\
+4 \eps \int_{\Omega} a_1 \, \eta^2 \, \beta |\ve\ue|^{2\beta-2} \, \xe_2(|\ve\ue|^2) \, \xe_7 \ue \, dx\\
+4 \eps \int_{\Omega} a_1 \, \eta^2 \, |\ve\ue|^{2\beta} \, \xe_2\xe_7 \ue \, dx + ... \\
\shoveleft{ \; \; \; +4 \eps \sum_i \int_{\Omega} a_{i1} \, \xe_2\ue \, 2\eta \, \xe_i\eta \, |\ve\ue|^{2\beta} \, \xe_7\ue \, dx}\\
 +4\eps \sum_i \int_{\Omega} a_{i1} \, \xe_2\ue \, \eta^2 \, \beta \, |\ve\ue|^{2\beta-2} \, \xe_i (|\ve\ue|^2) \, \xe_7\ue \, dx\\
+4\eps \sum_i \int_{\Omega} a_{i1} \, \xe_2\ue \, \eta^2 \, |\ve\ue|^{2\beta} \, \xe_i\xe_7\ue \, dx + ... \\
 \end{multline}	
As we already did in \eqref{eq:diffx3}, in the following estimates we will list one member of each group of terms requiring certain type of inequalities and signal the presence of similar terms by \lq\lq...\rq\rq.
By writing an identical equation for $\xe_8$ and adding it to \eqref{eq:diffx3}, we get nine representative terms:
\begin{multline*}
(L_1) + (L_2) +(L_3)\\
= (R_{11}) + (R_{12}) + (R_{13}) + ...\\
+ (R_{21}) + (R_{22}) + (R_{23}) + ...
\end{multline*}

We estimate each term.
\begin{multline*}	
\shoveright{(L_3) \geq l \int_{\Omega} \eta^2 \, \we^{\frac{p-2}{2}} \, |\ve\ue|^{2\beta} \, |\nae\ve\ue|^2 \, dx.}
\end{multline*}
\begin{multline*}
(L_2) = \frac{1}{2} \sum_{i,j} \int_{\Omega} a_{ij} \, \xe_j(|\ve\ue|^2) \, \eta^2 \, \beta \, |\ve\ue|^{2\beta-2} \, \xe_i (|\ve\ue|^2) \, dx \\
\geq \frac{\beta l}{2} \int_{\Omega} \eta^2 \, \we^{\frac{p-2}{2}} \, |\ve\ue|^{2\beta-2} \, |\nae(|\ve\ue|^2)|^2 \, dx	
\end{multline*}
\begin{multline*}
(L_1) \leq c \int_{\Omega} \we^{\frac{p-2}{2}} \, |\nae\ve\ue| \, 2\eta\, |\nae\eta| \, |\ve\ue|^{2\beta+1} \, dx\\
\leq \frac{l}{100} \int_{\Omega} \eta^2 \we^{\frac{p-2}{2}} \, |\ve\ue|^{2\beta}|\nae\ve\ue|^2 \, dx\\
+ c \int_{\Omega} |\nae\eta|^2 \, \we^{\frac{p-2}{2}} \, |\ve\ue|^{2\beta+2} \, dx	
\end{multline*}
\begin{multline*}
(R_{11})+(R_{21}) \leq c \eps \int_{\Omega} \we^{\frac{p-1}{2}} \, \eta \, |\nae\eta| \, |\ve\ue|^{2\beta+1} \, dx \\
\leq c \int_{\Omega} |\nae\eta|^2 \, \we^{\frac{p-2}{2}}	 \, |\ve\ue|^{2\beta+2} \, dx\\
+ c \eps^2 \int_{\Omega} \eta^2 \, \we^{\frac{p}{2}} \, |\ve\ue|^{2\beta} \, dx
\end{multline*}
\begin{multline*}
(R_{12})+(R_{13})+(R_{22})+(R_{23})\\ \leq c\eps(\beta+1) \int_{\Omega} \we^{\frac{p-1}{2}} \, \eta^2 \, |\ve\ue|^{2\beta} \, |\nae\ve\ue| \, dx\\
	\leq \frac{l}{100} \int_{\Omega} \eta^2 \, \we^{\frac{p-2}{2}} \, |\ve\ue|^{2\beta} \, |\nae\ve\ue|^2 \, dx\\
	+ c\eps^2 (\beta+1)^2 \int_{\Omega} \eta^2 \, \we^{\frac{p}{2}} \, |\ve\ue|^{2\beta} \, dx
\end{multline*}
By combining all these estimates we get \eqref{ineq:vertical1}.
\end{proof}

\begin{remark}
If in Lemma (\ref{lemma:vertical1}) we change $\eta$ to $\eta^{\beta +2}$ we get the following estimate:

\begin{multline}\label{ineq:vertical1.1}
\int_{\Omega} \eta^{2\beta+4} \, \we^{\frac{p-2}{2}} \, |\ve \ue|^{2\beta} \,
|\nae \ve \ue |^2 \, dx \\
\leq c (\beta+1)^2 ||\nae\eta||^2_{L^{\infty}} \int_{\Omega} \eta^{2\beta+2} \,  \we^{\frac{p-2}{2}} \, |\ve \ue |^{2\beta+2} \, dx\\
 + c \eps^2 (\beta + 1)^2 \int_{\Omega}	\eta^{2\beta+4} \, \we^{\frac{p}{2}} \, |\ve \ue|^{2\beta} \, dx .
\end{multline}
\end{remark}

\begin{lemma}\label{lemma:vertical2}
Let $0< \delta <1$, $\beta \geq 0$ and $\eta \in C_0^{\infty} (\Omega )$ be such that $0\leq \eta \leq 1$.
Then there exists a constant $c>0$ depending only on $\G$, $p$, $l$ and $L$ such that for any solution $\ue \in C^{\infty} (\Omega )$ of \eqref{eq:PDE2} we have
\begin{multline}\label{ineq:vertical2}
\int_{\Omega} \eta^2 \, \we^{\frac{p-2}{2}+\beta} \,
|\nae \nae \ue |^2 \, dx \\
\leq c (\beta +1)^4 \int_{\Omega} \eta^2 \,  \we^{\frac{p-2}{2}+\beta} \, |\vv \ue |^{2} \, dx\\
 + c (\beta+1)^2 \int_{\Omega}	(\eta^2+|\nae \eta|^2 + \eta|\vv\eta| )\, \we^{\frac{p}{2}+\beta} \, dx .
\end{multline}
\end{lemma}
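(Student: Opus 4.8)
The plan is to differentiate the equation \eqref{eq:PDE2} in the horizontal directions $\xe_1,\dots,\xe_6$ (rather than the vertical ones as in Lemma \ref{lemma:vertical1}) and to use the same Caccioppoli machinery, but now the commutators \eqref{epsiloncom} between horizontal fields produce vertical derivatives with no extra powers of $\eps$; indeed $[\xe_i,\xe_j]$ for $i,j\le 6$ is of the form $\pm\xe_k$ ($k\le6$) or $\pm\frac1\eps\xe_7,\pm\frac1\eps\xe_8$. So after differentiating \eqref{eq:PDE3} by $\xe_k$ ($k\in\{1,\dots,6\}$), switching $\xe_k$ past $\xe_i$ twice, and testing with $\phi=\eta^2\,\we^{\beta}\,\xe_k\ue$, the resulting identity has the same three left-hand terms $(L_1),(L_2),(L_3)$ as before, and the right-hand side collects (a) boundary terms from $\xe_i\eta$, (b) terms where the switch produced another first-order derivative $a_{i\ell}\,\xe_m\ue\,\xe_i\phi$ as in \eqref{eq:PDE4}, and crucially (c) terms carrying a factor $\frac1\eps\xe_7\ue$ or $\frac1\eps\xe_8\ue$ coming from the commutators of horizontal fields. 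First I would write this identity carefully for each $k$, sum over $k=1,\dots,6$, and also include the companion inequality obtained by differentiating in the vertical directions (which is essentially Lemma \ref{lemma:vertical1} with $\beta$ replaced so that the vertical weight matches $\we^{p-2/2+\beta}$), so that the left-hand side becomes $\int\eta^2\we^{\frac{p-2}2+\beta}|\nae\nae\ue|^2$.

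Next I would estimate each family of right-hand terms by Young's inequality, always absorbing the leading $\int\eta^2\we^{\frac{p-2}2+\beta}|\nae\nae\ue|^2$ term with a small constant $\frac{l}{100}$ into the left-hand side, exactly as in the proof of Lemma \ref{lemma:vertical1}. The boundary-type terms give $\int|\nae\eta|^2\we^{\frac p2+\beta}$, the extra first-order terms from the double switch give (after Young) contributions controlled by $\int\eta^2\we^{\frac{p-2}2+\beta}|\nae\nae\ue|^2$ (absorbed) plus $\int\eta^2\we^{\frac p2+\beta}$ with a constant that picks up powers of $(\beta+1)$. The term $\eta|\vv\eta|\,\we^{\frac p2+\beta}$ on the right of \eqref{ineq:vertical2} appears because when I integrate by parts on a term containing $\xe_7\phi$ or $\xe_8\phi$ I must differentiate $\eta$ in a vertical direction. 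The most delicate terms are the $\frac1\eps$-terms of type (c): a factor $\frac1\eps\xe_7\ue$ or $\frac1\eps\xe_8\ue$ multiplied by, e.g., $a_{ij}\,\xe_j\xe_k\ue\,\eta^2\we^\beta$. Here the $\frac1\eps$ is fatal unless it is killed. The mechanism, inherited from \cite{Z17,CCDO18}, is that these terms must be fed back into Lemma \ref{lemma:vertical1}: applying \eqref{ineq:vertical1} (or its rescaled version \eqref{ineq:vertical1.1}) with $\beta=0$ converts $\eps^2(\cdots)\int\eta^2\we^{\frac p2}$-type bounds for $\int\eta^2\we^{\frac{p-2}2}|\nae\ve\ue|^2$ into control of the vertical second derivatives, and the $\frac1\eps$ exactly cancels the $\eps$ gained from the commutator so that the net power of $\eps$ is $\eps^0$; what survives is bounded by $c(\beta+1)^4\int\eta^2\we^{\frac{p-2}2+\beta}|\vv\ue|^2$ together with the acceptable zero-order terms. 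This is where the $|\vv\ue|^2$ on the right of \eqref{ineq:vertical2} and the high power $(\beta+1)^4$ come from.

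Concretely, the steps in order are: (1) differentiate \eqref{eq:PDE3} in $\xe_k$, $k\le6$, commute twice, and record the identity analogous to \eqref{eq:PDE4}; (2) plug in $\phi=\eta^2\we^\beta\xe_k\ue$, expand $\xe_i\phi$ as in \eqref{eq:diffx3}, sum over $k$, and add the vertical-direction analogue so the left side is the full $|\nae\nae\ue|^2$ weight; (3) run the Young-inequality bookkeeping of Lemma \ref{lemma:vertical1}'s proof on every resulting term, absorbing all $\frac{l}{100}\int\eta^2\we^{\frac{p-2}2+\beta}|\nae\nae\ue|^2$ pieces on the left; (4) for the $\frac1\eps$-commutator terms, bound $\frac1\eps|\ve\ue|$ by first using Cauchy–Schwarz and then invoking Lemma \ref{lemma:vertical1} with $\beta=0$ (and, where a higher weight is needed, with the power of $\we$ shifted by $\beta$), so that the $\eps^2$ gained from the commutator squared and the $1/\eps^2$ cancel; (5) collect the surviving terms into the form $c(\beta+1)^4\int\eta^2\we^{\frac{p-2}2+\beta}|\vv\ue|^2 + c(\beta+1)^2\int(\eta^2+|\nae\eta|^2+\eta|\vv\eta|)\we^{\frac p2+\beta}$.

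I expect the main obstacle to be item (4): the precise accounting that shows the $1/\eps$ produced by $[\xe_i,\xe_j]\sim\frac1\eps\xe_7$ is always matched by an $\eps$ coming from one of the structure constants in \eqref{epsiloncom} (every vertical-horizontal bracket carries a factor $\eps$) \emph{after} the test function is chosen, and then feeding the residual vertical-gradient term into Lemma \ref{lemma:vertical1} with the correctly shifted exponent of $\we$. One must be careful that the weight $\we^\beta=(\delta+|\nae\ue|^2)^\beta$, unlike $|\ve\ue|^{2\beta}$, is differentiable with $\xe_i(\we^\beta)=\beta\we^{\beta-1}\xe_i(|\nae\ue|^2)$ and $|\nae(\we)|\le 2\we^{1/2}|\nae\nae\ue|$, which is what generates the factors of $\beta$ that accumulate to $(\beta+1)^4$; tracking these powers honestly through steps (2)–(4) is the bulk of the work, but it is otherwise a routine, if lengthy, Caccioppoli computation.
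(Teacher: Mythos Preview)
Your overall plan in steps (1)--(3) and (5) is correct and matches the paper's approach: differentiate \eqref{eq:PDE3} in each direction $\xe_k$, $k=1,\dots,8$, test with $\phi=\eta^2\we^\beta\xe_k\ue$, sum over $k$, and estimate term by term with Young's inequality. Your identification of where the $\eta|\vv\eta|$ term and the $(\beta+1)^4$ power arise is also accurate.

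However, step (4) misidentifies the mechanism for the $\tfrac1\eps$-terms, and the route through Lemma~\ref{lemma:vertical1} as you describe it would not close. The paper does not invoke Lemma~\ref{lemma:vertical1} anywhere in the proof of Lemma~\ref{lemma:vertical2}. The point is simpler than you think: since $\xe_7=\eps X_7$, one has $\tfrac1\eps\,\xe_7\ue=X_7\ue$ exactly, so any $\tfrac1\eps$ that lands on a \emph{first} vertical derivative of $\ue$ is just $|\vv\ue|$ with no $\eps$-bookkeeping at all. For the dangerous term $\tfrac1\eps\int a_2\,\xe_7\phi$, expanding $\xe_7\phi$ gives a piece $\tfrac1\eps\int a_2\,\eta^2\we^\beta\,\xe_7\xe_k\ue$; the paper commutes $\xe_7\xe_k=\xe_k\xe_7+O(\eps)$ and then integrates by parts in $\xe_k$, moving the horizontal derivative onto $a_2\,\eta^2\we^\beta$ and leaving $\tfrac1\eps\,\xe_7\ue=X_7\ue$. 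After Young's inequality this yields $c(\beta+1)^4\int\eta^2\we^{\frac{p-2}2+\beta}|\vv\ue|^2$ plus absorbable and lower-order pieces (these are the paper's terms $R(1.1)$ and $R(1.2)$; the latter also uses the identity $\langle\nae\ue,\xe_7\nae\ue\rangle=\langle\nae\ue,\nae\xe_7\ue\rangle$ before integrating by parts). The companion term $\tfrac1\eps\sum_i\int a_{i2}\,\xe_7\ue\,\xe_i\phi$ already carries $\tfrac1\eps\,\xe_7\ue=X_7\ue$ and is handled directly (the paper's $R(4.1)$--$R(4.3)$).

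Your proposed detour---invoke \eqref{ineq:vertical1} to control $\int\eta^2\we^{\frac{p-2}2}|\nae\ve\ue|^2$ and hope the $\eps^2$ there cancels a $\tfrac1{\eps^2}$---does not line up: the weight in Lemma~\ref{lemma:vertical1} is $|\ve\ue|^{2\beta}$, not $\we^\beta$, and the ``$\eps^2$ gained from the commutator squared'' you describe does not actually appear in these terms. (That feedback into Lemma~\ref{lemma:vertical1} \emph{is} the mechanism in the proof of Lemma~\ref{lemma:vertical3}, which may be what you are pattern-matching from.) Replace step (4) by the commute-then-integrate-by-parts maneuver above and the proof goes through exactly as in the paper.
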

	
\begin{proof}
Let's differentiate equation \eqref{eq:PDE3} with respect to $\xe_1$ and switch $\xe_1$ and $\xe_i$. In this way we get
\begin{equation*}
\sum_i \xe_i (\xe_1 a_i ) = \frac{1}{\eps} \xe_7 a_2 -4 \eps \xe_2 a_7 - \xe_5 a_3 
+ \text{similar terms} .
\end{equation*}
The weak form of this equation looks like
\begin{multline}\label{eq:vertical2}
\sum_{i,j} \int_{\Omega} a_{ij} \, \xe_1 \xe_j \ue \, \xe_i \phi \, dx\\ \shoveleft{=
	 \frac{1}{\eps} \int_{\Omega} a_2 \, \xe_7 \phi \, dx -4 \eps \int_{\Omega} a_7 \, \xe_2 \phi \, dx  - \int_{\Omega} a_3 \, \xe_5 \phi \, dx + ... }
\end{multline}
After switching $\xe_j$ and $\xe_1$ in \eqref{eq:vertical2} we get
\begin{multline}\label{eq:PDE5}
\sum_{i,j} \int_{\Omega} a_{ij} \, \xe_j \xe_1 \ue \, \xe_i \phi \, dx\\ 
	\shoveleft{ = \frac{1}{\eps} \int_{\Omega} a_2 \, \xe_7 \phi \, dx -4\eps \int_{\Omega} a_7 \, \xe_2 \phi \, dx 
	-  \int_{\Omega} a_3 \, \xe_5 \phi \, dx + ...}\\
	\shoveleft{+ \frac{1}{\eps} \sum_i \int_{\Omega} a_{i2} \, \xe_7 \ue \, \xe_i \phi \, dx 
	 -4 \eps \sum_i \int_{\Omega} a_{i7} \, \xe_2 \ue \, \xe_i \phi \, dx} \\
	\shoveright{- \sum_i \int_{\Omega} a_{i3} \, \xe_5 \ue \, \xe_i\phi \, dx + ... }\\
\end{multline}

Let us use $\phi = \eta^2 \, \we^{\beta} \, \xe_1\ue$ in \eqref{eq:PDE5}.  
\begin{multline*}
	\sum_{i,j} \int_{\Omega} a_{ij}\, \xe_j\xe_1\ue\, \eta^2 \, \we^{\beta} \, \xe_i \xe_1 \ue \, dx\\
	+ \sum_{i,j} \int_{\Omega} a_{ij} \, \xe_j\xe_1\ue \, \eta^2 \, \beta \we^{\beta-1} \, \xe_i (|\nae\ue|^2) \, \xe_1\ue\, dx\\
	\shoveright{+ \sum_{i,j} \int_{\Omega} a_{ij} \, \xe_j\xe_1\ue \, 2\eta\, \xe_i\eta \, \we^{\beta} \, \xe_1\ue \, dx + ...}\\
\shoveleft{= \frac{1}{\eps} \int_{\Omega} a_2 \, \eta^2 \, \we^{\beta} \, \xe_7\xe_1\ue \, dx}\\
+\frac{1}{\eps} \int_{\Omega} a_2 \, \eta^2 \, \beta \we^{\beta-1} \, \xe_7 (|\nae\ue|^2) \, \xe_1 \ue \, dx\\
\shoveright{+\frac{1}{\eps} \int_{\Omega} a_2 \, 2\eta \, \xe_7\eta \, \we^{\beta} \, \xe_1\ue \, dx +...}\\
\shoveleft{-4 \eps \int_{\Omega} a_7 \, \eta^2 \, \we^{\beta} \, \xe_2\xe_1\ue \, dx}\\
-4 \eps \int_{\Omega} a_7 \, \eta^2 \, \beta\we^{\beta-1} \, \xe_2 (|\nae\ue|^2) \, \xe_1\ue \, dx\\
\shoveright{-4 \eps \int_{\Omega} a_7 \, 2\eta\, \xe_2\eta \, \we^{\beta} \, \xe_1\ue \, dx+...}\\
\shoveleft{-  \int_{\Omega} a_3 \, \eta^2 \, \we^{\beta} \, \xe_5\xe_1\ue \, dx}\\
- \int_{\Omega} a_3 \, \eta^2 \, \beta\we^{\beta-1} \, \xe_5 (|\nae\ue|^2) \, \xe_1\ue \, dx\\
\shoveright{- \int_{\Omega} a_3 \, 2\eta \, \xe_5\eta \, \we^{\beta} \, \xe_1\ue \, dx + ...}\\
\shoveleft{+\frac{1}{\eps} \sum_i \int_{\Omega} a_{i2} \, \xe_7\ue \, \eta^2 \, \we^{\beta} \, \xe_i\xe_1\ue \, dx}\\
+\frac{1}{\eps} \sum_i \int_{\Omega} a_{i2} \, \xe_7\ue \, \eta^2 \, \beta \we^{\beta-1} \, \xe_i (|\nae\ue|^2) \, \xe_1 \ue \, dx\\
\shoveright{+\frac{1}{\eps} \sum_i \int_{\Omega} a_{i2} \, \xe_7\ue \, 2\eta \, \xe_i\eta \, \we^{\beta} \, \xe_1\ue \, dx +...}\\
\shoveleft{-4\eps \sum_i \int_{\Omega} a_{i7} \, \xe_2\ue \, \eta^2 \, \we^{\beta} \, \xe_i\xe_1\ue \, dx}\\
-4\eps \sum_i \int_{\Omega} a_{i7} \, \xe_2\ue \, \eta^2 \, \beta \we^{\beta-1} \, \xe_i (|\nae\ue|^2) \, \xe_1 \ue \, dx\\
\shoveright{-4\eps \sum_i \int_{\Omega} a_{i7} \, \xe_2\ue \, 2\eta \, \xe_i\eta \, \we^{\beta} \, \xe_1\ue \, dx +...}\\
\shoveleft{- \sum_i \int_{\Omega} a_{i3} \, \xe_5\ue \, \eta^2 \, \we^{\beta} \, \xe_i\xe_1\ue \, dx}\\
- \sum_i \int_{\Omega} a_{i3} \, \xe_5\ue \, \eta^2 \, \beta \we^{\beta-1} \, \xe_i (|\nae\ue|^2) \, \xe_1 \ue \, dx\\
\shoveright{- \sum_i \int_{\Omega} a_{i3} \, \xe_5\ue \, 2\eta \, \xe_i\eta \, \we^{\beta} \, \xe_1\ue \, dx +... \, .}\\
\end{multline*}
Repeat the above calculations for $\xe_2 , ... , \xe_8$ and add all equations. In this way we get an equation in the following format
\begin{equation*}
	\text{L(1.1)} + \text{L(1.2)} + \text{L(1.3)}\\
	= \sum_{i=1}^6 \text{R(i.1)} + \text{R(i.2)} + \text{R(i.3)} + ... 
\end{equation*}
We estimate each term.
\begin{multline*}
\text{L(1.1)} = \sum_{i,j,k} \int_{\Omega} a_{ij}\, \xe_j\xe_k\ue\, \eta^2 \, \we^{\beta} \, \xe_i \xe_k \ue \, dx \\ \geq l \int_{\Omega} \eta^2 \, \we^{\frac{p-2}{2}+\beta} \, |\nae\nae\ue|^2 \, dx.
\end{multline*}
\begin{multline*}
\text{L(1.2)} = \sum_{i,j} \int_{\Omega} a_{ij} \, \sum_k \xe_j\xe_k\ue \, \xe_k\ue \, \eta^2 \, \beta \we^{\beta-1} \, \xe_i (|\nae\ue|^2) \,  dx\\
= \frac{\beta}{2} \sum_{i,j} \int_{\Omega} a_{ij} \, \xe_j (|\nae \ue|^2) \, \eta^2 \, \we^{\beta-1} \, \xe_i (|\nae\ue|^2) \,  dx\\
\geq \frac{\beta l}{2} \int_{\Omega} \eta^2 \we^{\frac{p-2}{2}+\beta-1}\, |\nae (|\nae\ue|^2) \, dx.
\end{multline*}
\begin{multline*}
	|\text{L(1.3)}| \leq c \int_{\Omega} \we^{\frac{p-2}{2}} \, |\nae\nae\ue| \, \eta \, |\nae\eta| \, \we^{\beta + \frac{1}{2}} \, dx\\
	\leq \frac{l}{100} \int_{\Omega} \eta^2 \, \we^{\frac{p-2}{2}+\beta} \, |\nae\nae\ue|^2 \, dx \\ + c \int_{\Omega} |\nae\eta|^2 \, \we^{\frac{p}{2} + \beta} \, dx \, .
	\end{multline*}
\begin{multline*}
	\text{R(1.1)} = \frac{1}{\eps} \int_{\Omega} a_2 \, \eta^2 \, \we^{\beta} \, \xe_7\xe_1\ue \, dx + ...\\ 
	= \frac{1}{\eps} \int_{\Omega} a_2 \, \eta^2 \, \we^{\beta} \, (\xe_1\xe_7\ue -4 \eps \xe_2\ue )\, dx + ...\\
	= -\frac{1}{\eps} \int_{\Omega} \xe_1 ( a_2 \, \eta^2 \, \we^{\beta}) \, \xe_7\ue \, dx +4 \int_{\Omega} a_2 \, \eta^2 \, \we^{\beta} \, \xe_2 \ue \, dx+ ...\\
	\shoveleft{=- \sum_i \int_{\Omega} a_{2i} \, \xe_1\xe_i\ue \, \eta^2 \, \we^{\beta} \, X_7\ue \, dx- \int_{\Omega} a_2 \, 2\eta\, \xe_1\eta \, \we^{\beta} \, X_7\ue \, dx}\\	
	-\int_{\Omega} a_2 \, \eta^2 \, \beta \we^{\beta-1} \, 2 \langle \nae\ue , \xe_1 \nae\ue \rangle \, X_7\ue \, dx\\
	+4\int_{\Omega} a_2 \, \eta^2 \, \we^{\beta} \, \xe_2\ue \, dx + ...\\
	\shoveleft{\leq c \int_{\Omega} \we^{\frac{p-2}{2}} \, |\nae\nae\ue| \, \eta^2 \we^{\beta} \, |\vv \ue| \, dx  
	+ c \int_{\Omega} \we^{\frac{p-1}{2}} \, \eta \, |\nae\eta| \, \we^{\beta} \, |\vv\ue| \, dx}\\
	+ c \int_{\Omega} \we^{\frac{p-1}{2}} \, \eta^2 \, \beta\we^{\beta-1} \, \we^{\frac{1}{2}} \, |\nae\nae\ue| \, |\vv\ue| \, dx\\
	+ c \int_{\Omega} \we^{\frac{p-1}{2}} \, \eta^2 \, \we^{\beta} \, \we^{\frac{1}{2}} \, dx\\
	\shoveleft{ \leq \frac{l}{200} \int_{\Omega} \eta^2 \, \we^{\frac{p-2}{2}+\beta} \, |\nae\nae\ue|^2 \, dx + 2c \int_{\Omega} \eta^2 \, \we^{\frac{p-2}{2}+\beta} \, |\vv \ue|^2 \, dx}\\ 
	+ c \int_{\Omega} |\nae\eta|^2 \, \we^{\frac{p}{2}+\beta} \, dx\\
	\shoveleft{ + \frac{l}{200} \int_{\Omega} \eta^2 \, \we^{\frac{p-2}{2}+\beta} \, |\nae\nae\ue|^2 \, dx + c \beta^2 \int_{\Omega} \eta^2 \, \we^{\frac{p-2}{2}+\beta} \, |\vv\ue|^2 \, dx}\\ 
	+ c \int_{\Omega} \eta^2 \, \we^{\frac{p}{2}+\beta} \, dx\\
	\shoveleft{ \leq \frac{l}{100} \int_{\Omega} \eta^2 \, \we^{\frac{p-2}{2}+\beta} \, |\nae\nae\ue|^2 \, dx + c (\beta+1)^2 \int_{\Omega} \eta^2 \, \we^{\frac{p-2}{2}+\beta} \, |\vv \ue|^2 \, dx}\\ 
	+ c \int_{\Omega} (\eta^2 + |\nae\eta|^2 )\, \we^{\frac{p}{2}+\beta} \, dx \, .\\
\end{multline*}

For  the next set of estimates we will use the following identity that comes from the commutators' Table \ref{tab:hresult}:
$$\langle \nae\ue , \xe_i\nae\ue \rangle = \langle \nae\ue , \nae \xe_i\ue \rangle \, , \; \text{if} \; i = 7 \; \text{or} \; 8 \, .$$

\begin{multline*}
\text{R(1.2)} = \frac{1}{\eps} \int_{\Omega} a_2 \, \eta^2 \, \beta \we^{\beta-1} \, \xe_7 (|\nae\ue|^2) \, \xe_1 \ue \, dx+...\\
	= \frac{1}{\eps} \int_{\Omega} a_2 \, \eta^2 \, \beta \we^{\beta-1} \, 2\langle \nae\ue , \xe_7\nae\ue \rangle \, \xe_1 \ue \, dx+...\\
\shoveright{= \frac{1}{\eps} \int_{\Omega} a_2 \, \eta^2 \, \beta \we^{\beta-1} \, 2\langle \nae\ue , \nae\xe_7\ue \rangle \, \xe_1 \ue \, dx+...}\\
\shoveleft{ =  \frac{2\beta}{\eps} \sum_i \int_{\Omega} a_2 \, \eta^2 \we^{\beta-1} \, \xe_i\ue \, \xe_i\xe_7\ue \, \xe_1\ue \, dx+...}\\
= -\frac{2\beta}{\eps} \sum_i \int_{\Omega} \xe_i ( a_2 \, \eta^2 \we^{\beta-1} \, \xe_i\ue \, \xe_1\ue )\, \xe_7\ue \, dx+...\\	
= -2\beta \sum_i \int_{\Omega} \xe_i ( a_2 \, \eta^2 \we^{\beta-1} \, \xe_i\ue \, \xe_1\ue )\, X_7\ue \, dx+...\\
\shoveleft{=-2\beta \sum_{i,j} \int_{\Omega} a_{2j} \, \xe_i\xe_j\ue \, \eta^2 \we^{\beta-1} \, \xe_i\ue \, \xe_1\ue \, X_7\ue \, dx+...}\\
-2\beta \sum_i \int_{\Omega} a_2 \, 2\eta \, \xe_i\eta\, \we^{\beta-1} \, \xe_i\ue \, \xe_1\ue \, X_7\ue \, dx+...\\
-2\beta(\beta-1) \sum_i \int_{\Omega} a_2 \, \eta^2 \, \we^{\beta-2} \, \xe_i (|\nae\ue|^2) \, \xe_i\ue \, \xe_1\ue \, X_7\ue \, dx+...\\
-2\beta \sum_i \int_{\Omega} a_2 \, \eta^2 \we^{\beta-1} \, \xe_i\xe_i\ue \, \xe_1\ue \, X_7\ue \, dx+...\\
-2\beta \sum_i \int_{\Omega} a_2 \, \eta^2 \we^{\beta-1} \, \xe_i\ue \, \xe_i\xe_1\ue \, X_7\ue \, dx+...\\
\shoveleft{\leq c \beta \int_{\Omega} \we^{\frac{p-2}{2}+\beta} \, |\nae\nae\ue| \, \eta^2 \, |\vv\ue| \, dx}\\
+ c \beta \int_{\Omega} \we^{\frac{p-1}{2}+\beta} \, \eta \, |\nae\eta| \, |\vv\ue| \, dx\\
+ c (\beta+1)^2 \int_{\Omega} \we^{\frac{p-2}{2}+\beta} \, |\nae\nae\ue| \, \eta^2 \, |\vv\ue| \, dx\\
\shoveleft{\leq \frac{l}{200} \int_{\Omega} \eta^2 \, \we^{\frac{p-2}{2}+\beta}\, |\nae\nae\ue|^2 \, dx + c\beta^2 \int_{\Omega} \eta^2 \, \we^{\frac{p-2}{2}+\beta} \, |\vv\ue|^2 \, dx}\\
+c \int_{\Omega} |\nae\eta|^2 \, \we^{\frac{p}{2}+\beta} \, dx + c\beta^2 \int_{\Omega} \eta^2 \, \we^{\frac{p-2}{2}+\beta} \, |\vv\ue|^2 \, dx\\
+\frac{l}{200} \int_{\Omega} \eta^2 \, \we^{\frac{p-2}{2}+\beta}\, |\nae\nae\ue|^2 \, dx + c(\beta+1)^4 \int_{\Omega} \eta^2 \, \we^{\frac{p-2}{2}+\beta} \, |\vv\ue|^2 \, dx\\
\shoveleft{\leq \frac{l}{100} \int_{\Omega} \eta^2 \, \we^{\frac{p-2}{2}+\beta}\, |\nae\nae\ue|^2 \, dx} \\ 
+ c(\beta+1)^4 \int_{\Omega} \eta^2 \, \we^{\frac{p-2}{2}+\beta} \, |\vv\ue|^2 \, dx + c \int_{\Omega} |\nae\eta|^2 \, \we^{\frac{p}{2}+\beta} \, dx \, .
\end{multline*}

\begin{multline*}
\shoveright{\text{R(1.3)+R(2.3)+R(5.3)} \leq c (\eps+1)  \int_{\Omega} \eta |\vv\eta| \, \we^{\frac{p}{2}+\beta} \, dx .}
\end{multline*}

\begin{multline*}
	\text{R(2.1)+R(2.2)+R(3.1)+R(3.2)+R(5.1)+R(5.2)+R(6.1)+R(6.2)} \\ \leq c (\eps+1) (\beta+1) \int_{\Omega} \eta^2 \, \we^{\frac{p-1}{2}+\beta} \, |\nae\nae\ue| \, dx \\
	\leq \frac{l}{100} \int_{\Omega} \eta^2 \, \we^{\frac{p-2}{2}+\beta} \, |\nae\nae\ue|^2 \, dx + c (\eps+1)^2 (\beta+1)^2 \int_{\Omega} \eta^2 \, \we^{\frac{p}{2}+\beta} \, dx \, .
\end{multline*}

\begin{multline*}
	\text{R(3.3)+R(6.3)} \leq c \int_{\Omega} \we^{\frac{p-1}{2}} \, \eta \, |\nae\eta| \, \we^{\beta+\frac{1}{2}} \, dx \\
	\leq c \int_{\Omega} (\eta^2 + |\nae\eta|^2) \, \we^{\frac{p}{2}+\beta} \, dx .
\end{multline*}

\begin{multline*}
	\text{R(4.1)+R(4.2)}\leq c (\beta+1) \int_{\Omega} \we^{\frac{p-2}{2}+\beta} \, |\vv\ue| \, \eta^2 \, |\nae\nae\ue| \, dx \\
	\leq \frac{l}{100} \int_{\Omega} \eta^2 \, \we^{\frac{p-2}{2}+\beta} \, |\nae\nae\ue|^2 \, dx + c (\beta+1)^2 \int_{\Omega} \eta^2 \, \we^{\frac{p-2}{2}+\beta} \, |\vv\ue|^2 \, dx.
\end{multline*}

\begin{multline*}
	\text{R(4.3)}\leq c \int_{\Omega} \we^{\frac{p-1}{2}+\beta} \, |\vv\ue| \, \eta \, |\nae\eta| \, \, dx \\
	\leq c \int_{\Omega} \eta^2 \, \we^{\frac{p-2}{2}+\beta} \, |\vv\ue|^2 \, dx
	+ c \int_{\Omega} |\nae\eta|^2 \, \we^{\frac{p}{2}+\beta} \,  dx \, .
	\end{multline*}
By adding the estimates from above we get \eqref{ineq:vertical2} and this finished the proof of Lemma \ref{lemma:vertical2}.
\end{proof}

\begin{lemma}\label{lemma:vertical3}
Let $0< \delta <1$, $\beta \geq 1$ and $\eta \in C_0^{\infty} (\Omega )$ be such that $0 \leq \eta \leq 1$.
Then there exists a constant $c>0$ depending only on $p$, $l$ and $L$ such that for any solution $\ue \in C^{\infty} (\Omega )$ of \eqref{eq:PDE2} we have
\begin{multline}\label{ineq:vertical3}
\int_{\Omega} \eta^{2\beta+2} \, \we^{\frac{p-2}{2}} \, |\ve\ue|^{2\beta} \,
|\nae \nae \ue |^2 \, dx \\
\leq c \eps^2 (\beta +1)^4 \| \nae\eta \|^2_{L^{\infty}} \int_{\Omega} \eta^{2\beta} \,  \we^{\frac{p}{2}} \, |\ve \ue |^{2\beta-2} \, |\nae\nae\ue|^2 dx \, .\\
 \end{multline}
\end{lemma}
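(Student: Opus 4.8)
The driving observation is that the ``vertical'' fields $X_7,X_8$ are themselves brackets of horizontal ones: by the rescaled bracket relations \eqref{epsiloncom},
$$\xe_7\ue=-\eps\,(\xe_1\xe_2\ue-\xe_2\xe_1\ue),\qquad \xe_8\ue=-\eps\,(\xe_3\xe_4\ue-\xe_4\xe_3\ue),$$
so that pointwise $|\ve\ue|^2=(\xe_7\ue)^2+(\xe_8\ue)^2\le 4\,\eps^2\,|\nae\nae\ue|^2$. This is the sole origin of the factor $\eps^2$ in \eqref{ineq:vertical3}: it lets us trade one of the $\beta$ copies of $|\ve\ue|^2$ hidden in $|\ve\ue|^{2\beta}=|\ve\ue|^{2\beta-2}\,|\ve\ue|^2$ for $4\eps^2|\nae\nae\ue|^2$, at the cost of raising the power of the Hessian.

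The plan is as follows. Write $|\ve\ue|^{2\beta}=|\ve\ue|^{2\beta-2}\bigl((\xe_7\ue)^2+(\xe_8\ue)^2\bigr)$, and in the term with $(\xe_7\ue)^2$ replace one factor $\xe_7\ue$ by $-\eps(\xe_1\xe_2\ue-\xe_2\xe_1\ue)$ (likewise for $\xe_8$). Integrating by parts in $\xe_1$ and in $\xe_2$ --- legitimate since $\xe_j^{*}=-\xe_j$ for left-invariant fields --- moves those horizontal derivatives off $\ue$ and onto the product $\eta^{2\beta+2}\,\we^{\frac{p-2}{2}}\,|\ve\ue|^{2\beta-2}\,(\xe_7\ue)\,|\nae\nae\ue|^2$. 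The derivative $\xe_j$ then falls on one of five factors: on $\eta^{2\beta+2}$, giving $(\beta+1)\|\nae\eta\|_{L^\infty}\eta^{2\beta+1}$; on $\we^{\frac{p-2}{2}}$ or on $|\ve\ue|^{2\beta-2}$, giving --- via $|\xe_j\we|\le 2\we^{1/2}|\nae\nae\ue|$ and $|\xe_j(|\ve\ue|^2)|\le 2|\ve\ue|\,|\nae\nae\ue|$ --- the factors $|p-2|$, $\beta-1$ and one more power of $|\nae\nae\ue|$; on $\xe_7\ue$, giving $|\xe_j\xe_7\ue|\le|\nae\nae\ue|+c\eps\we^{1/2}$ (because $[\xe_j,\xe_7]$ is $\eps$ times a horizontal field, by Table \ref{tab:hresult}); or on $|\nae\nae\ue|^2$, which produces a third-order derivative that I eliminate by a second integration by parts bringing $\xe_j$ back onto the remaining lower-order factors --- the only new term there involves $\xe_1\xe_2\ue$, which is returned to the analysis through $\xe_1\xe_2\ue=\xe_2\xe_1\ue-\tfrac1\eps\xe_7\ue$, the $\tfrac1\eps$ cancelling against the outer $\eps$. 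Each resulting term is then estimated by Cauchy--Schwarz and Young's inequality so as to split off, on one side, a small multiple of the left-hand side of \eqref{ineq:vertical3}, which is absorbed, and on the other a constant multiple of $\eps^2(\beta+1)^4\|\nae\eta\|_{L^\infty}^2\int_\Omega\eta^{2\beta}\,\we^{\frac p2}\,|\ve\ue|^{2\beta-2}\,|\nae\nae\ue|^2\,dx$; the elementary inequalities $|\ve\ue|^{2\beta-1}\le\we^{1/2}|\ve\ue|^{2\beta-2}$, $\eta^{2\beta+1}\le\eta^{2\beta}$ and $0<\eps<1$ are used throughout, and the hypothesis $\beta\ge 1$ makes $|\ve\ue|^{2\beta-2}$ a bona fide nonnegative power. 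Equivalently, one may feed the pointwise bound $|\ve\ue|^2\le 4\eps^2|\nae\nae\ue|^2$ directly into \eqref{ineq:vertical1.1}, after decomposing $|\nae\nae\ue|^2$ into its ``inner vertical'' part, controlled by that inequality, and its ``inner horizontal'' part, reduced to the former modulo the brackets in Table \ref{tab:hresult}.

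The step I expect to be the real obstacle is precisely the accounting of powers of $\eps$. Whenever a horizontal second derivative is converted to a vertical one (or conversely), the $\tfrac1\eps$ brackets $[\xe_1,\xe_2]=-\tfrac1\eps\xe_7$, $[\xe_3,\xe_4]=-\tfrac1\eps\xe_8$, $[\xe_5,\xe_6]=\tfrac1\eps(\xe_8-\xe_7)$ appear, and one must verify that every such $\tfrac1\eps$ is attached to a factor $\xe_7\ue$ or $\xe_8\ue$ --- hence to a compensating $\eps$, via $|\ve\ue|\le 2\eps|\nae\nae\ue|$ or via $X_i^{\eps}=\eps X_i$ for $i=7,8$ --- so that the net exponent of $\eps$ in each surviving term is exactly $2$; one must likewise check that no term of order higher than two, and no left-hand-side term carrying an $O(1)$ coefficient, remains after the final absorption.
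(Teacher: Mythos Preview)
Your proposal has a genuine gap: it never uses the equation \eqref{eq:PDE2}. The inequality \eqref{ineq:vertical3} is asserted only for solutions, and it is false for general smooth functions, so a proof by pure integration-by-parts and commutator bookkeeping cannot succeed. Concretely, your opening move $|\ve\ue|^{2\beta}=|\ve\ue|^{2\beta-2}\cdot|\ve\ue|^2\le 4\eps^2|\ve\ue|^{2\beta-2}|\nae\nae\ue|^2$ turns the left side into an integral of $\eps^2\eta^{2\beta+2}\we^{\frac{p-2}{2}}|\ve\ue|^{2\beta-2}|\nae\nae\ue|^4$, with a \emph{fourth} power of the Hessian; none of the steps you outline ever brings that exponent back down to $2$. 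Your ``second integration by parts'' on the third-order piece is the symptom: integrating $\int G\,\xe_2\ue\,\xe_1(|\nae\nae\ue|^2)$ back in $\xe_1$ returns $-\int G\,(\xe_1\xe_2\ue)\,|\nae\nae\ue|^2$, which is the term you started from (so you obtain a triviality), together with $-\int(\xe_1G)\,\xe_2\ue\,|\nae\nae\ue|^2$, whose $\xe_1(\xe_7\ue)$ part again contributes a factor $|\nae\nae\ue|$ and hence a cube. Splitting by Young always leaves a term comparable to $\eps^2\int\eta^{2\beta+2}\we^{\frac{p-2}{2}}|\ve\ue|^{2\beta-2}|\nae\nae\ue|^4$, which is \emph{not} dominated by the right side of \eqref{ineq:vertical3} without an a~priori bound of the type $|\nae\nae\ue|^2\lesssim\|\nae\eta\|_{L^\infty}^2\eta^{-2}\we$ --- and no such bound holds. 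Your alternative route through \eqref{ineq:vertical1.1} has the same defect: that inequality controls only $|\nae\ve\ue|^2$, the ``inner vertical'' block of the Hessian; the purely horizontal block $\sum_{i,j\le 6}(\xe_i\xe_j\ue)^2$ is untouched, and the brackets among horizontal fields do not reduce it to the vertical one.

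The paper supplies exactly the missing ingredient: it differentiates \eqref{eq:PDE2} in each direction $\xe_k$ (this is \eqref{eq:vertical2}), tests with $\phi=\eta^{2\beta+2}|\ve\ue|^{2\beta}\xe_k\ue$, and sums over $k$. Ellipticity \eqref{elliptic1} then produces the coercive term
\[
L_1=\sum_{i,j,k}\int_\Omega a_{ij}\,\xe_k\xe_j\ue\,\eta^{2\beta+2}|\ve\ue|^{2\beta}\xe_k\xe_i\ue\,dx\ \ge\ l\int_\Omega\eta^{2\beta+2}\we^{\frac{p-2}{2}}|\ve\ue|^{2\beta}|\nae\nae\ue|^2\,dx,
\]
which is precisely the left side of \eqref{ineq:vertical3}. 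All remaining terms are then shown --- with the help of \eqref{ineq:vertical1.1} and the pointwise bound $|\ve\ue|^2\le 2\eps^2|\nae\nae\ue|^2$ --- either to be absorbable into $L_1$ or to be dominated by the right side. It is the use of the equation, through the ellipticity of $(a_{ij})$, that keeps the argument at second order and furnishes the positive term; without it there is nothing to absorb into.
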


\begin{proof}
Let us use $\phi = \eta^{2\beta+2} \, |\ve\ue|^{2\beta} \, \xe_1\ue$ in 	\eqref{eq:vertical2}.
First, let us organize the terms of $\xe_i \phi$ in the following way:
\begin{multline*}
	\xe_i\phi = \eta^{2\beta+2} \, |\ve\ue|^{2\beta} \, \xe_1\xe_i \ue 
	+\delta_{i2} \, \frac{1}{\eps} \, \eta^{2\beta+2} \, |\ve\ue|^{2\beta} \, \xe_7\ue + ...\\
	\shoveright{ -4 \delta_{i7} \, \eps \eta^{2\beta+2}\, |\ve\ue|^{2\beta} \, \xe_2\ue + ...
	- \delta_{i3} \, \eta^{2\beta+2} \, |\ve\ue|^{2\beta} \, \xe_5\ue + ...}\\
	\; \; \; \; \; + \eta^{2\beta+2} \, \beta \, |\ve\ue|^{2\beta-2} \, \xe_i (|\ve\ue|^2) \, \xe_1 \ue\\
	 + (2\beta+2) \eta^{2\beta+1} \, \xe_i \eta \, |\ve\ue|^{2\beta} \, \xe_1 \ue \, .
\end{multline*} 

Therefore, equation \eqref{eq:vertical2} has the following form.
\begin{multline*}
	\sum_{i,j} \int_{\Omega} a_{ij} \, \xe_1\xe_j\ue \, \eta^{2\beta+2} \, |\ve\ue|^{2\beta} \, \xe_1\xe_i\ue \, dx\\
	+\frac{1}{\eps} \sum_j \int_{\Omega} a_{2j} \, \xe_1\xe_j\ue \, \eta^{2\beta+2} \, |\ve\ue|^{2\beta} \, \xe_7\ue \, dx + ...\\
	-4 \eps \sum_j \int_{\Omega} a_{7j} \, \xe_1\xe_j\ue \, \eta^{2\beta+2} \, |\ve\ue|^{2\beta} \, \xe_2\ue \, dx + ...\\
	-  \sum_j \int_{\Omega} a_{3j} \, \xe_1\xe_j\ue \, \eta^{2\beta+2} \, |\ve\ue|^{2\beta} \, \xe_5\ue \, dx + ...\\
	\shoveleft{+ \beta \sum_{i,j} \int_{\Omega} a_{ij} \, \xe_1\xe_j\ue \, \eta^{2\beta+2} \, |\ve\ue|^{2\beta-2} \, \xe_i (|\ve\ue|^2) \, \, \xe_1\ue \, dx}\\
	+ 2(\beta+1) \sum_{i,j} \int_{\Omega} a_{ij} \, \xe_1\xe_j\ue \, \eta^{2\beta+1} \, \xe_i\eta \, |\ve\ue|^{2\beta} \, \xe_1\ue \, dx \\\shoveleft{= -\frac{1}{\eps} \int_{\Omega} \xe_7 a_2 \, \eta^{2\beta+2} \, |\ve\ue|^{2\beta} \, \xe_1\ue \, dx + ...}\\
	+4\eps \int_{\Omega} \xe_2 a_7 \, \eta^{2\beta+2} \, |\ve\ue|^{2\beta} \, \xe_1\ue \, dx+...\\
	+  \int_{\Omega} \xe_5 a_3 \, \eta^{2\beta+2} \, |\ve\ue|^{2\beta} \, \xe_1\ue \, dx +...	\, .
	\end{multline*}
By repeating this for $i = 1,2, ... , 8$ and adding the equations we get the following terms: 
$$\sum_{i=1}^6 (L_i) = \sum_{i=1}^3 (R_i) \, .$$
Once more, let's estimate each term.
\begin{multline*}
	(L_1) = \sum_k \sum_{i,j} \int_{\Omega} a_{ij} \, \xe_k\xe_j\ue \, \eta^{2\beta+2} \, |\ve\ue|^{2\beta} \xe_k\xe_i\ue \, dx\\
	\geq l \int_{\Omega} \eta^{2\beta+2} \, \we^{\frac{p-2}{2}} \, |\ve\ue|^{2\beta} \, |\nae\nae\ue|^2 \, dx.
\end{multline*}

\begin{multline*}
(L_2) = \frac{1}{\eps} \sum_j \int_{\Omega} a_{2j} \, \xe_1\xe_j\ue \, \eta^{2\beta+2} \, |\ve\ue|^{2\beta} \, \xe_7\ue \, dx + ...\\
=\frac{1}{\eps} \int_{\Omega} \xe_1 a_{2} \, \, \eta^{2\beta+2} \, |\ve\ue|^{2\beta} \, \xe_7\ue \, dx + ...\\
= -\frac{1}{\eps} \int_{\Omega} a_{2} \, \xe_1 ( \eta^{2\beta+2} \, |\ve\ue|^{2\beta} \, \xe_7\ue )\, dx + ...\\
\shoveleft{=- \frac{1}{\eps} \int_{\Omega} a_{2} \, (2\beta+2) \, \eta^{2\beta+1} \, \xe_1\eta \, |\ve\ue|^{2\beta} \, \xe_7\ue \, dx + ...}\\
- \frac{1}{\eps} \int_{\Omega} a_{2} \, \eta^{2\beta+2} \,\beta |\ve\ue|^{2\beta-2} \xe_1 (|\ve\ue|^2) \, \xe_7\ue \, dx + ...\\
-\frac{1}{\eps} \int_{\Omega} a_{2} \, \eta^{2\beta+2} \, |\ve\ue|^{2\beta} \, \xe_1\xe_7\ue \, dx + ...\\
\shoveleft{\leq \frac{c (\beta+1)}{\eps} \int_{\Omega} \eta^{2\beta+1} \, \we^{\frac{p-1}{2}} \, |\nae\eta| \, |\ve\ue|^{2\beta+1} \, dx}\\
+ \frac{c (\beta+1)}{\eps} \int_{\Omega} \eta^{2\beta+2} \, \we^{\frac{p-1}{2}} \, |\ve\ue|^{2\beta} \, |\nae\ve\ue| \, dx\\
\shoveleft{\leq \frac{l}{400\eps^2} \int_{\Omega} \eta^{2\beta+2} \, \we^{\frac{p-2}{2}} \, |\ve\ue|^{2\beta+2} \, dx}\\
+ c (\beta+1)^2 \int_{\Omega} \eta^{2\beta} \, \we^{\frac{p}{2}} \, |\nae\eta|^2 \, |\ve\ue|^{2\beta} \, dx\\
\shoveleft{+ \frac{l}{200c\eps^2 \, (\beta+1)^2 ||\nae\eta||^2_{L^{\infty}}} \int_{\Omega} \eta^{2\beta+4} \, \we^{\frac{p-2}{2}} \, |\ve\ue|^{2\beta} \, |\nae\ve\ue|^2 \, dx}\\
+c^3 (\beta+1)^4 ||\nae\eta||^2_{L^{\infty}} \int_{\Omega} \eta^{2\beta} \, \we^{\frac{p}{2}} \, |\ve\ue|^{2\beta} \, dx \, .
\end{multline*}

In the following we use \eqref{ineq:vertical1.1}, the inequalities 
$$|\ve\ue |^2 \leq 2 \eps^2 |\nae\nae\ue|^2, \; \; ||\eta||_{L^{\infty}} \leq 1 \, ,$$ and that without loss of generality we  can assume   $||\nae\eta||_{L^{\infty}} \geq 1$.

\begin{multline*}
(L_2) \leq \frac{l}{200} \int_{\Omega} \eta^{2\beta+2} \, \we^{\frac{p-2}{2}} \, |\ve\ue|^{2\beta} \, |\nae\nae\ue|^2 \, dx\\
+ c (\beta+1)^2 \, ||\nae\eta||^2_{L^{\infty}} \, \eps^2 \int_{\Omega} \eta^{2\beta} \, \we^{\frac{p}{2}} \, |\ve\ue|^{2\beta-2} \, |\nae\nae\ue|^2 \, dx\\
\shoveleft{+ \frac{l}{200\eps^2 } \int_{\Omega} \eta^{2\beta+2} \, \we^{\frac{p-2}{2}} \, |\ve\ue|^{2\beta+2} \, dx}\\
+ c \int_{\Omega} \eta^{2\beta+4} \, \we^{\frac{p}{2}} \, |\ve\ue|^{2\beta} \, dx\\
+c^3 (\beta+1)^4 ||\nae\eta||^2_{L^{\infty}} \, \eps^2 \int_{\Omega} \eta^{2\beta} \, \we^{\frac{p}{2}} \, |\ve\ue|^{2\beta-2} \, |\nae\nae\ue|^2 \, dx \\
\shoveleft{\leq \frac{l}{100} \int_{\Omega} \eta^{2\beta+2} \, \we^{\frac{p-2}{2}} \, |\ve\ue|^{2\beta} \, |\nae\nae\ue|^2 \, dx}\\
+c (\beta+1)^4 ||\nae\eta||^2_{L^{\infty}} \, \eps^2 \int_{\Omega} \eta^{2\beta} \, \we^{\frac{p}{2}} \, |\ve\ue|^{2\beta-2} \, |\nae\nae\ue|^2 \, dx .
\end{multline*}

\begin{multline*}
	(L_3)+(L_4)+(R_2)+(R_3) \\ 
	\shoveleft{\leq c (\eps+1) \int_{\Omega} \eta^{2\beta+2} \, \we^{\frac{p-1}{2}} \, |\ve\ue|^{2\beta} \, |\nae\nae\ue| \, dx}\\
	\shoveleft{\leq \frac{l}{100} \int_{\Omega} \eta^{2\beta+2} \, \we^{\frac{p-2}{2}} \, |\ve\ue|^{2\beta} \, |\nae\nae\ue|^2 \, dx}\\
	+ c (\eps+1)^2 \int_{\Omega} \eta^{2\beta+2} \, \we^{\frac{p}{2}} \, |\ve\ue|^{2\beta} \, dx\\
	\shoveleft{ \leq \frac{l}{100} \int_{\Omega} \eta^{2\beta+2} \, \we^{\frac{p-2}{2}} \, |\ve\ue|^{2\beta} \, |\nae\nae\ue|^2 \, dx}\\
	+ c \eps^2(\eps+1)^2 \int_{\Omega} \eta^{2\beta+2} \, \we^{\frac{p}{2}} \, |\ve\ue|^{2\beta-2} \, |\nae\nae\ue|^2 \, dx.
	\end{multline*}

\begin{multline*}
	(L_5) \leq c \beta \int_{\Omega} \eta^{2\beta+2} \, \we^{\frac{p-1}{2}} \, |\ve\ue|^{2\beta-1} \, |\nae\nae\ue| \, |\nae\ve\ue| \, dx\\
	\shoveleft{ \leq \frac{l}{200 c\eps^2 (\beta+1)^2 ||\nae\eta||^2_{L^{\infty}}} \int_{\Omega} \eta^{2\beta+4} \, \we^{\frac{p-2}{2}} \, |\ve\ue|^{2\beta} \, |\nae\ve\ue|^2 \, dx}\\
	 + c^3 \eps^2 (\beta+1)^4 ||\nae\eta||^2_{L^{\infty}} \int_{\Omega} \eta^{2\beta} \,\we^{\frac{p}{2}} \, |\ve\ue|^{2\beta-2} \, |\nae\nae\ue|^2 \, dx\\
\shoveleft{ \leq \frac{l}{200 \eps^2} \int_{\Omega} \eta^{2\beta+2} \, \we^{\frac{p-2}{2}} \, |\ve\ue|^{2\beta+2} \, dx}\\
+ c \int_{\Omega} \eta^{2\beta+4} \, \we^{\frac{p}{2}} \, |\ve\ue|^{2\beta} \, dx\\
	 + c^3 \eps^2 (\beta+1)^4 ||\nae\eta||^2_{L^{\infty}} \int_{\Omega} \eta^{2\beta} \,\we^{\frac{p}{2}} \, |\ve\ue|^{2\beta-2} \, |\nae\nae\ue|^2 \, dx\\	 
\shoveleft{ \leq \frac{l}{100} \int_{\Omega} \eta^{2\beta+2} \, \we^{\frac{p-2}{2}} \, |\ve\ue|^{2\beta} \, |\nae\nae\ue|^2 \, dx}\\
	 + c \eps^2 (\beta+1)^4 ||\nae\eta||^2_{L^{\infty}} \int_{\Omega} \eta^{2\beta} \,\we^{\frac{p}{2}} \, |\ve\ue|^{2\beta-2} \, |\nae\nae\ue|^2 \, dx.
	 \end{multline*}
	
\begin{multline*}
	(L_6) \leq c (\beta+1) \int_{\Omega} \eta^{2\beta+1} \, |\nae\eta| \, \we^{\frac{p-1}{2}} \, |\ve\ue|^{2\beta} \, |\nae\nae\ue| \, dx\\
\shoveleft{ \leq \frac{l}{100} \int_{\Omega} \eta^{2\beta+2} \, \we^{\frac{p-2}{2}} \, |\ve\ue|^{2\beta} \, |\nae\nae\ue|^2 \, dx}\\
	 + c \eps^2 (\beta+1)^2 ||\nae\eta||^2_{L^{\infty}} \int_{\Omega} \eta^{2\beta} \,\we^{\frac{p}{2}} \, |\ve\ue|^{2\beta-2} \, |\nae\nae\ue|^2 \, dx. 
	\end{multline*}	
	
\begin{multline*}
	(R_1) = -\frac{1}{\eps} \sum_j \int_{\Omega} a_{2j} \, \xe_7\xe_j\ue \, \eta^{2\beta+2} \, |\ve\ue|^{2\beta} \, \xe_1\ue \, dx + ...\\
	\shoveleft{=-\frac{1}{\eps} \sum_j \int_{\Omega} a_{2j} \, \xe_j\xe_7\ue \, \eta^{2\beta+2} \, |\ve\ue|^{2\beta} \, \xe_1\ue \, dx + ...}\\
	+4 \int_{\Omega} a_{21} \, \xe_2\ue \, \eta^{2\beta+2} \, |\ve\ue|^{2\beta} \, \xe_1\ue \, dx + ...\\
	\leq \frac{c}{\eps} \int_{\Omega} \eta^{2\beta+2} \, \we^{\frac{p-1}{2}} \,	|\ve\ue|^{2\beta} \, |\nae\ve\ue| \, dx 
+ c \int_{\Omega} \eta^{2\beta+2} \, \we^{\frac{p}{2}} \,	|\ve\ue|^{2\beta} \, dx \, . 
\end{multline*}	
Following now the estimates from $(L_2)$ we get that
\begin{multline*}
	(R_1) \leq \frac{l}{100} \int_{\Omega} \eta^{2\beta+2} \, \we^{\frac{p-2}{2}} \, |\ve\ue|^{2\beta} \, |\nae\nae\ue|^2 \, dx\\
+c (\beta+1)^2 ||\nae\eta||^2_{L^{\infty}} \, \eps^2 \int_{\Omega} \eta^{2\beta} \, \we^{\frac{p}{2}} \, |\ve\ue|^{2\beta-2} \, |\nae\nae\ue|^2 \, dx .
\end{multline*}
We can finish now the proof by combining the above estimates.	
\end{proof}

Using the fact that $|\vv\ue|^2 \leq 2 |\nae \nae \ue|^2$, we can adapt the proof of Lemma \ref{lemma:vertical3} to the case $\beta=0$, to obtain the following estimate.

\begin{cor}\label{cor:vertical3}
Let $0< \delta <1$ and $\eta \in C_0^{\infty} (\Omega )$ be such that $0 \leq \eta \leq 1$.
Then there exists a constant $c>0$ depending only on $p$, $l$ and $L$ such that for any solution $\ue \in C^{\infty} (\Omega )$ of \eqref{eq:PDE2} we have
\begin{multline}\label{ineq:vertical2.1}
\int_{\Omega} \eta^2 \, \we^{\frac{p-2}{2}} \,
|\nae \nae \ue |^2 \, dx \\
\leq c \left( 1 + ||\nae \eta||^2_{L^{\infty}} + ||\vv\eta||_{L^{\infty}} \right)\int_{\supp (\eta)}	\we^{\frac{p}{2}} \, dx .
\end{multline}
\end{cor}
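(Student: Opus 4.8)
The plan is to re-run the second-derivative Caccioppoli argument of Lemma \ref{lemma:vertical3} in the unweighted case $\beta = 0$. Differentiate \eqref{eq:PDE3} once with respect to $\xe_k$ and switch the two horizontal derivatives, so as to arrive at the weak identity \eqref{eq:vertical2} (with $\xe_1$ replaced by $\xe_k$); then test it against $\phi = \eta^2\,\xe_k\ue$ and sum over $k = 1,\dots,8$. By the ellipticity \eqref{elliptic1} the left-hand side controls $l\int_\Omega \eta^2\,\we^{\frac{p-2}{2}}\,|\nae\nae\ue|^2\,dx$ from below, while the right-hand side is a \emph{finite} sum of error terms of exactly the shapes met in the proof of Lemma \ref{lemma:vertical3}, except that at $\beta = 0$ all the combinatorial factors $(\beta+1)^k$ and the weight $|\ve\ue|^{2\beta}$ disappear.

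These error terms split into two families. The first family consists of the terms in which a derivative falls on $\eta$, the terms carrying the non-singular commutators $[\xe_i,\xe_j] = \pm\xe_k$ with $i,j,k\in\{1,\dots,6\}$, and the terms carrying the $\eps$-compensated commutators $[\xe_7,\xe_1] = -4\eps\,\xe_2$, $[\xe_8,\xe_1] = -2\eps\,\xe_2$, etc.; these are handled verbatim as in Lemma \ref{lemma:vertical3}, using \eqref{elliptic1}--\eqref{elliptic3}, Young's inequality, $0\le\eta\le 1$, $0<\eps<1$, the elementary bound $|\ve\ue|^2 \le \we$, and \eqref{ineq:vertical1.1}, yielding a bound by $\tfrac{l}{100}\int_\Omega \eta^2\,\we^{\frac{p-2}{2}}\,|\nae\nae\ue|^2\,dx$ plus a multiple of $\big(1 + \|\nae\eta\|^2_{L^\infty} + \|\vv\eta\|_{L^\infty}\big)\int_{\supp\eta}\we^{\frac p2}\,dx$ (absorbing $\int f\,|\nae\eta|^2 \le \|\nae\eta\|^2_{L^\infty}\int_{\supp\eta} f$, and similarly with $\|\vv\eta\|_{L^\infty}$).

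The second family consists of the terms produced by the singular commutators $[\xe_1,\xe_2] = -\tfrac1\eps\xe_7$, $[\xe_3,\xe_4] = -\tfrac1\eps\xe_8$, $[\xe_5,\xe_6] = \tfrac1\eps(\xe_8-\xe_7)$: after the integrations by parts carried out in Lemma \ref{lemma:vertical3} the factor $\tfrac1\eps\xe_7\ue = X_7\ue$ (resp.\ $X_8\ue$) surfaces, i.e.\ a component of $\vv\ue$, leaving integrals of the form $\int_\Omega \we^{\frac{p-2}{2}}\,|\nae\nae\ue|\,\eta^2\,|\vv\ue|\,dx$ and $\int_\Omega \we^{\frac{p-1}{2}}\,\eta\,|\nae\eta|\,|\vv\ue|\,dx$, together with a contribution inherited through \eqref{ineq:vertical1.1}. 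Here the crucial input is the pointwise identity
\[
|\vv\ue|^2 = (X_7\ue)^2 + (X_8\ue)^2 \le 2\,|\nae\nae\ue|^2,
\]
which holds because $X_7 = -[X_1,X_2]$, $X_8 = -[X_3,X_4]$ and $X_1,\dots,X_6$ are \emph{not} rescaled in $\nae^\eps$, so that $X_7\ue = X_2 X_1\ue - X_1 X_2\ue$ is literally a difference of entries of the matrix $\nae^\eps\nae^\eps\ue$. Combining this identity with Young's inequality — and, for the term coming from \eqref{ineq:vertical1.1}, with $|\ve\ue| = \eps\,|\vv\ue|$ so that the surviving powers of $\eps$ cancel — converts each term of the second family into $\tfrac{l}{100}\int_\Omega \eta^2\,\we^{\frac{p-2}{2}}\,|\nae\nae\ue|^2\,dx$ plus a multiple of $\big(1+\|\nae\eta\|^2_{L^\infty}+\|\vv\eta\|_{L^\infty}\big)\int_{\supp\eta}\we^{\frac p2}\,dx$.

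The main obstacle is exactly the bookkeeping in this last step: one must check that no negative power of $\eps$ survives and that, after applying $|\vv\ue|^2 \le 2|\nae\nae\ue|^2$, the total coefficient of $\int_\Omega \eta^2\,\we^{\frac{p-2}{2}}\,|\nae\nae\ue|^2\,dx$ accumulated on the right remains strictly below $l$. This is precisely what forces $\beta = 0$: for $\beta \ge 1$ the same terms come multiplied by powers of $\beta+1$ and cannot be absorbed, which is why Lemma \ref{lemma:vertical3} instead leaves them on the right in the weighted form $\eps^2(\beta+1)^4\|\nae\eta\|^2_{L^\infty}\int \eta^{2\beta}\we^{\frac p2}|\ve\ue|^{2\beta-2}|\nae\nae\ue|^2\,dx$. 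Once the absorption is performed, transferring the residual $\tfrac{l}{2}\int_\Omega \eta^2\,\we^{\frac{p-2}{2}}\,|\nae\nae\ue|^2\,dx$ to the left gives \eqref{ineq:vertical2.1}.
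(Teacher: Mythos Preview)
Your proposal is correct and follows exactly the route the paper indicates: re-run the proof of Lemma~\ref{lemma:vertical3} with $\beta=0$ (so the test function is $\phi=\eta^2\,\xe_k\ue$), and replace every occurrence of $|\vv\ue|^2$ by $2|\nae\nae\ue|^2$ via the commutator identity $X_7=-[X_1,X_2]$, $X_8=-[X_3,X_4]$, so that the resulting small multiples of $\int\eta^2\we^{\frac{p-2}{2}}|\nae\nae\ue|^2\,dx$ can be absorbed on the left. Your bookkeeping remarks (cancellation of negative powers of~$\eps$, use of \eqref{ineq:vertical1.1} at $\beta=0$, and the absorption constraint) are precisely the points that make the $\beta=0$ case work and the $\beta\ge 1$ case fail.
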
 

\begin{lemma}\label{lemma:vertical4}
Let $0< \delta <1$, $\beta \geq 1$ and $\eta \in C_0^{\infty} (\Omega )$ be such that $0 \leq \eta \leq 1$.
Then there exists a constant $c>0$ depending only on $p$, $l$ and $L$ such that for any solution $\ue \in C^{\infty} (\Omega )$ of \eqref{eq:PDE2} we have
\begin{multline}\label{ineq:vertical4}
\int_{\Omega} \eta^{2\beta+2} \, \we^{\frac{p-2}{2}} \, |\vv\ue|^{2\beta} \,
|\nae \nae \ue |^2 \, dx \\
\leq c^{\beta} (\beta +1)^{4\beta} \| \nae\eta \|^{2\beta}_{L^{\infty}} \int_{\Omega} \eta^{2} \,  \we^{\frac{p-2}{2}+\beta} \, |\nae\nae\ue|^2 dx \, .\\
 \end{multline}	
\end{lemma}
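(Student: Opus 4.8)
The plan is to derive \eqref{ineq:vertical4} by iterating Lemma \ref{lemma:vertical3} finitely many times, each iteration trading one factor $|\vv\ue|^2$ for one factor $\we$ at the cost of a constant $c(\beta+1)^4\|\nae\eta\|_{L^\infty}^2$ and of two powers of the cutoff $\eta$.

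The first thing to prove is a \emph{weighted} version of Lemma \ref{lemma:vertical3}: for every integer $s\ge0$, every $\gamma\ge1$ and every $\eta\in C_0^\infty(\Omega)$ with $0\le\eta\le1$,
$$\int_{\Omega}\eta^{2\gamma+2}\,\we^{\frac{p-2}{2}+s}\,|\ve\ue|^{2\gamma}\,|\nae\nae\ue|^{2}\,dx\ \le\ c\,\eps^{2}(\gamma+1)^{4}\,\|\nae\eta\|_{L^\infty}^{2}\int_{\Omega}\eta^{2\gamma}\,\we^{\frac{p-2}{2}+s+1}\,|\ve\ue|^{2\gamma-2}\,|\nae\nae\ue|^{2}\,dx ,$$
with $c=c(p,l,L)$; for $s=0$ this is Lemma \ref{lemma:vertical3} itself, since $\we^{p/2}=\we^{\frac{p-2}{2}+1}$. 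I would prove it by rerunning the proof of Lemma \ref{lemma:vertical3} with the test function $\phi=\eta^{2\gamma+2}\we^{s}|\ve\ue|^{2\gamma}\xe_{1}\ue$ (and the analogues obtained by replacing $\xe_1$ by $\xe_2,\dots,\xe_8$ and differentiating the equation in the matching direction) in place of $\eta^{2\gamma+2}|\ve\ue|^{2\gamma}\xe_1\ue$. The only genuinely new contribution is the one in which $\xe_i$ falls on the factor $\we^{s}$; after summing the eight equations and using $\sum_{k}\xe_i\xe_k\ue\,\xe_k\ue=\tfrac12\,\xe_i(|\nae\ue|^{2})$ (the sum over all eight directions, so that $|\nae\ue|^2=\sum_k(\xe_k\ue)^2$), this contribution equals
$$\frac{s}{2}\int_{\Omega}\eta^{2\gamma+2}\,\we^{s-1}\,|\ve\ue|^{2\gamma}\,\Bigl(\sum_{i,j}a_{ij}\,\xe_i(|\nae\ue|^{2})\,\xe_j(|\nae\ue|^{2})\Bigr)\,dx\ \ge\ 0$$
by the ellipticity condition \eqref{elliptic1} --- exactly as the term $L(1.2)$ in the proof of Lemma \ref{lemma:vertical2}. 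Hence it sits on the coercive side of the resulting identity and may simply be dropped; every remaining term is literally the corresponding term in the proof of Lemma \ref{lemma:vertical3} multiplied by the inert factor $\we^{s}$, so those estimates, and in particular the polynomial factor $(\gamma+1)^{4}$, are unchanged.

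For the iteration, assume first $\beta\in\mathbb{N}$ and put, for $m=0,1,\dots,\beta$,
$$A_{m}:=\int_{\Omega}\eta^{2m+2}\,\we^{\frac{p-2}{2}+\beta-m}\,|\vv\ue|^{2m}\,|\nae\nae\ue|^{2}\,dx ,$$
so that $A_{\beta}$ is the left-hand side of \eqref{ineq:vertical4} and $A_{0}=\int_{\Omega}\eta^{2}\we^{\frac{p-2}{2}+\beta}|\nae\nae\ue|^{2}\,dx$ is its right-hand integral. Since $\ve=\eps\vv$ one has $|\ve\ue|^{2k}=\eps^{2k}|\vv\ue|^{2k}$, so dividing the weighted inequality above (applied with $\gamma=m$, $s=\beta-m$) by $\eps^{2m}$ cancels all powers of $\eps$ and gives
$$A_{m}\ \le\ c\,(\beta+1)^{4}\,\|\nae\eta\|_{L^\infty}^{2}\,A_{m-1},\qquad m=1,\dots,\beta ,$$
using $(m+1)^4\le(\beta+1)^4$ and that at each step the two weight exponents add up to $\frac{p-2}{2}+\beta$. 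Composing these $\beta$ inequalities yields $A_{\beta}\le\bigl(c(\beta+1)^{4}\|\nae\eta\|_{L^\infty}^{2}\bigr)^{\beta}A_{0}=c^{\beta}(\beta+1)^{4\beta}\|\nae\eta\|_{L^\infty}^{2\beta}A_{0}$, which is \eqref{ineq:vertical4}. A general $\beta\ge1$ is reduced to this by running the same chain down to running parameter $\lfloor\beta\rfloor$ and disposing of the leftover $|\vv\ue|$-power $2\{\beta\}<2$ by an interpolation (Young's inequality); in any case only integer $\beta$ is used in the sequel.

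The step I expect to be the real obstacle is the weighted Lemma \ref{lemma:vertical3}: a priori, letting $\xe_i$ act on $\we^{s}$ produces a term of the \emph{same} differential order as the coercive term, of size $\sim s$ times it, which could not be absorbed once $s$ is large. What rescues the argument is that, after the eight differentiated equations are summed, this term is not arbitrary but reassembles --- through $\sum_{k}\xe_i\xe_k\ue\,\xe_k\ue=\tfrac12\xe_i(|\nae\ue|^{2})$ --- into the manifestly nonnegative form $\sum_{i,j}a_{ij}\,\xe_i(|\nae\ue|^{2})\,\xe_j(|\nae\ue|^{2})$ tested against itself, hence carries the favourable sign; the rest is bookkeeping of the inert weight $\we^{s}$ through estimates already carried out for Lemma \ref{lemma:vertical3}.
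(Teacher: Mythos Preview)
Your iterative strategy is quite different from the paper's, which is essentially a one--line argument: apply H\"older's inequality with exponents $\tfrac{\beta}{\beta-1}$ and $\beta$ directly to the right-hand side of Lemma~\ref{lemma:vertical3}, splitting
\[
\eta^{2\beta}\,\we^{\frac{p}{2}}\,|\ve\ue|^{2\beta-2}\,|\nae\nae\ue|^2
=\Bigl(\eta^{2\beta+2}\we^{\frac{p-2}{2}}|\ve\ue|^{2\beta}|\nae\nae\ue|^2\Bigr)^{\frac{\beta-1}{\beta}}
\Bigl(\eta^{2}\we^{\frac{p-2}{2}+\beta}|\nae\nae\ue|^2\Bigr)^{\frac{1}{\beta}}.
\]
This makes the left-hand side of Lemma~\ref{lemma:vertical3} bounded by itself to the power $\tfrac{\beta-1}{\beta}$ times the target integral to the power $\tfrac{1}{\beta}$, and one divides through; no weighted version of Lemma~\ref{lemma:vertical3} is needed, and the case $\beta=1$ is Lemma~\ref{lemma:vertical3} itself. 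So your route is considerably more laborious.

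More importantly, there is a genuine gap in your weighted Lemma~\ref{lemma:vertical3}. The identity you invoke, $\sum_k \xe_i\xe_k\ue\,\xe_k\ue=\tfrac12\xe_i(|\nae\ue|^2)$, is correct, but it is \emph{not} what appears after summing the eight equations. Lemma~\ref{lemma:vertical3} starts from \eqref{eq:vertical2}, whose left side carries $a_{ij}\,\xe_k\xe_j\ue\,\xe_i\phi_k$; when $\xe_i$ hits $\we^s$ and you sum in $k$, the $k$-dependent factor is $\sum_k \xe_k\xe_j\ue\cdot\xe_k\ue$, with the \emph{outer} derivative $\xe_k$ --- not $\xe_j$. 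This differs from $\tfrac12\xe_j(|\nae\ue|^2)$ by the commutator sum $\sum_k[\xe_k,\xe_j]\ue\cdot\xe_k\ue$, which does \emph{not} vanish here (e.g.\ for $j=1$ one finds $(\tfrac{1}{\eps}-4\eps)\xe_2\ue\,\xe_7\ue-2\eps\xe_2\ue\,\xe_8\ue$). The analogy with $L(1.2)$ in Lemma~\ref{lemma:vertical2} fails because that proof starts from \eqref{eq:PDE5}, where the switch to $\xe_j\xe_k\ue$ has already been made and the commutator corrections moved to the right-hand side.

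These commutator terms produce a contribution of size $\tfrac{cs}{\eps}\int\eta^{2\gamma+2}\we^{\frac{p-2}{2}+s}|\ve\ue|^{2\gamma+1}|\nae\nae\ue|$, which cannot be absorbed into the coercive term with an $s$-independent constant. One \emph{can} control it using $|\ve\ue|^2\le\we$ and $|\ve\ue|^2\le2\eps^2|\nae\nae\ue|^2$, but this costs an extra factor $(s+1)^2$ in each step; the iteration then yields $c^\beta(\beta+1)^{6\beta}$ rather than $c^\beta(\beta+1)^{4\beta}$. That weaker exponent is harmless for the Moser iteration downstream, so your program is salvageable --- but not ``bookkeeping'': you must actually treat these commutator corrections, and the constant you obtain will not match the statement as written.
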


\begin{proof}
The case $\beta=1$ is included in Lemma (\ref{lemma:vertical3}).\\
In the case of $\beta>1$, in the right hand side of  \eqref{ineq:vertical3} we use Young's inequality with the constants $\overline{p}=\frac{\beta}{\beta-1}$ and $\overline{q} = \beta$.	
\begin{multline*}
	\int_{\Omega} \eta^{2\beta+2} \, \we^{\frac{p-2}{2}} \, |\ve\ue|^{2\beta} \,
|\nae \nae \ue |^2 \, dx \\
\leq c \eps^2 (\beta +1)^4 \| \nae\eta \|^2_{L^{\infty}} \int_{\Omega} \eta^{2\beta} \,  \we^{\frac{p}{2}} \, |\ve \ue |^{2\beta-2} \, |\nae\nae\ue|^2 dx\\
\shoveleft{\leq \left( \int_{\Omega} \eta^{2\beta+2} \, \we^{\frac{p-2}{2}} \, |\ve\ue|^{2\beta} \, |\nae\nae\ue|^2 \, dx \right)^{\frac{\beta-1}{\beta}} }\\
\cdot \left( c^{\beta} \, \eps^{2\beta} \, (\beta+1)^{4\beta} \, 
||\nae\eta||^{2\beta}_{L^{\infty}} \, \int_{\Omega} \eta^{2}\, \we^{\frac{p-2}{2}+\beta} \, |\nae\nae\ue|^2 \, dx \right)^{\frac{1}{\beta}} \, .
\end{multline*}
Taking into consideration a division by $\eps^{2\beta}$, estimate \eqref{ineq:vertical4} is now a simple consequence of the above inequality.
\end{proof}

{\bf Proof of Theorem \ref{thm:Lipschitz}.}
\begin{proof}
We start with the first term on the right hand side of \eqref{ineq:vertical2}.
By Young's inequality with exponents $\overline{p} = \beta+1$ and $\overline{q} = \frac{\beta+1}{\beta}$ and Lemma \ref{lemma:vertical4} we get that
\begin{multline*}
c (\beta+1)^4 \int_{\Omega} \eta^2 \, \we^{\frac{p-2}{2}+\beta} \, |\vv\ue|^2 \, dx\\
\shoveleft{\leq c (\beta+1)^4 \left( \int_{\Omega} \eta^{2\beta+2} \, \we^{\frac{p-2}{2}} \, |\vv\ue|^{2\beta+2} \, dx \right)^{\frac{1}{\beta+1}}}\\ 
\shoveright{\cdot \left( \int_{\supp \eta} \we^{\frac{p}{2}+\beta} \, dx \right)^{\frac{\beta}{\beta+1}}}\\
\shoveleft{\leq c (\beta+1)^4 \left( 2\, \int_{\Omega} \eta^{2\beta+2} \, \we^{\frac{p-2}{2}} \, |\vv\ue|^{2\beta} \, |\nae\nae\ue|^2 \, dx \right)^{\frac{1}{\beta+1}}}\\ 
\shoveright{\cdot \left( \int_{\supp \eta} \we^{\frac{p}{2}+\beta} \, dx \right)^{\frac{\beta}{\beta+1}}}\\
\shoveleft{\leq c (\beta+1)^4 \left( c^{\beta} (\beta+1)^{4\beta} \, ||\nae\eta||^{2\beta}_{L^{\infty}}\, \int_{\Omega} \eta^{2} \, \we^{\frac{p-2}{2}+\beta} \, |\nae\nae\ue|^{2} \, dx \right)^{\frac{1}{\beta+1}}}\\ 
\shoveright{\cdot \left( \int_{\supp \eta} \we^{\frac{p}{2}+\beta} \, dx \right)^{\frac{\beta}{\beta+1}}}\\
\shoveleft{\leq c^{\frac{2\beta+1}{\beta+1}} (\beta+1)^{\frac{8\beta+4}{\beta+1}} \, ||\nae\eta||^{\frac{2\beta}{\beta+1}}_{L^{\infty}}\, \left( \int_{\Omega} \eta^{2} \, \we^{\frac{p-2}{2}+\beta} \, |\nae\nae\ue|^{2} \, dx \right)^{\frac{1}{\beta+1}}}\\ 
\shoveright{\cdot \left( \int_{\supp \eta} \we^{\frac{p}{2}+\beta} \, dx \right)^{\frac{\beta}{\beta+1}}}\\
\shoveleft{\leq \frac{1}{\beta+1} \,\int_{\Omega} \eta^{2} \, \we^{\frac{p-2}{2}+\beta} \, |\nae\nae\ue|^{2} \, dx}\\ 
+ \frac{\beta}{\beta+1} \, c^{\frac{2\beta+1}{\beta}} (\beta+1)^{\frac{8\beta+4}{\beta}} \, ||\nae\eta||^{2}_{L^{\infty}}\, \int_{\supp \eta} \we^{\frac{p}{2}+\beta} \, dx \,.
\end{multline*}  

Hence, inequality \eqref{ineq:vertical2} implies the following estimate:
\begin{multline}\label{ineq:vertical5}
\int_{\Omega} \eta^2 \, \we^{\frac{p-2}{2}+\beta}\, |\nae\nae\ue|^2 \, dx\\
\leq c (\beta+1)^{12} \, \left(1+ ||\nae\eta||^2_{L^{\infty}} + ||\vv\eta||_{L^{\infty}} \right) \, \int_{\supp \eta}	 \we^{\frac{p}{2}+\beta} \, dx.
\end{multline}
 
Since for any integer $1 \leq i \leq 8$ we have
\begin{equation*}
	\left( \xe_i \left( \eta \, \we^{\frac{p}{4}+\frac{\beta}{2}} \right) \right)^2 \\
	\leq 2 |\nae\eta|^2 \, \we^{\frac{p}{2} +\beta} \, +  2 \left( \frac{p}{2}+\beta \right)^2 \, \eta^2 \, \we^{\frac{p}{2}+\beta} \, |\nae\nae\ue|^2 \, ,
\end{equation*}
it follows that 
\begin{multline*}
	\int_{\Omega} \left| \nae \left( \eta \, \we^{\frac{p}{4} + \frac{\beta}{2}} \right) \right|^2 \, dx \\
	\leq c (\beta+ 1)^{14} \, \left( 1 + ||\nae\eta||^2_{L^{\infty}} + ||\vv\eta||_{L^{\infty}} \right) \, \int_{\supp \eta}	 \we^{\frac{p}{2}+\beta} \, dx.
\end{multline*}

Similarly to \eqref{def:distance}, for any small $\eps >0$, consider subunitary curves associated to $\{ \xe_i , 1 \leq i \leq 8 \}$, the control distance $d^\eps$ and balls $B_r^{\eps}$. \\
Notice that for all $\eps >0$ and  $x,y \in \SU(3)$ we have $d^{\eps} (x,y) \leq d(x,y)$, and hence it follows that $B_r \subset B_r^{\eps}$. The homogeneous dimension $Q=10$ provides a constant $c$ independent of $\eps$ such that for volumes of balls of radius $0 < r \leq 1$ we have
$$ c r^Q \leq |B_r| \leq |B_r^{\eps}| \, .$$
By  \cite[Theorem V.4.5, page 70]{VSC92}, the Sobolev inequality holds for $\kappa=\frac{Q}{Q-2} = \frac{5}{4}$ and a constant $c$, depending only on $Q$ and independent of $\eps$. For a careful study of the independence of $c$ of $\eps$, see \cite{CC16}. Therefore, for $0 < \frac{r}{2} \leq r_1 < r_2 \leq r$ and appropriate cut-off function $\eta$ we have
\begin{multline*}
	\left[ \int_{B_{r_1}^{\eps}} \eta^{2\kappa} \, \we^{(\frac{p}{2}+\beta) \kappa} \right]^{\frac{1}{\kappa}}\\
	\leq c (\beta+ 1)^{14} \, \left( 1 + ||\nae\eta||^2_{L^{\infty}} + ||\vv\eta||_{L^{\infty}} \right) \, \int_{B_{r_2}^{\eps}}	 \we^{\frac{p}{2}+\beta} \, dx.
\end{multline*}

The well-known Moser iteration leads to a constant independent of $\eps$, such that for any weak solution $\ue$ of \eqref{eq:PDE2} in $B$, satisfying $\ue = u$ on $\partial B$ we have
\begin{equation}\label{eq:Linfinityeps}
\sup_{B_{r/2}^{\eps}} |\nae \ue | \leq c \left( -\hspace{-0.45cm}\int_{B_r^{\eps}} (\delta + |\nae\ue |^2 )^{\frac{p}{2}} dx \right)^{\frac{1}{p}} \, .
 	\end{equation}
Letting $\eps \to 0$ in \eqref{eq:Linfinityeps}, we obtain \eqref{ineq:Linfinity}. 	
\end{proof}

\section{The proof of Theorem \ref{thm:GradientHolder}}

Based on the Lipschitz regularity from Theorem \ref{thm:Lipschitz} and \cite[Theorem 1.1]{DM09} we have the following result:

\begin{theorem} \label{thm:smooth}
Let $p \geq 2$, $\delta >0$ and $u \in  W^{1,p}_{\Ho\loc} (\Omega )$ be a weak solution of \eqref{eq:PDE1}. Then $u \in C^{\infty} (\Omega)$.
\end{theorem}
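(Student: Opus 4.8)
The plan is to observe that, once the horizontal gradient is bounded, equation \eqref{eq:PDE1} becomes \emph{uniformly} subelliptic, which places it squarely in the framework of \cite[Theorem 1.1]{DM09}. First, fix $\Omega' \subset\subset \Omega$; covering $\overline{\Omega'}$ by finitely many Carnot-Carath\'eodory balls $B_{r/2}$ whose doubles $B_r$ are still compactly contained in $\Omega$ and applying Theorem \ref{thm:Lipschitz} on each, one obtains $M := 1 + \|\he u\|_{L^\infty(\Omega')} < \infty$ (the right-hand sides there are finite since $u \in W^{1,p}_{\Ho,\loc}$). Because $p \ge 2$ and $\delta > 0$, on the bounded set $\{|\xi| \le M\}$ the weight $(\delta + |\xi|^2)^{(p-2)/2}$ is pinched between $\delta^{(p-2)/2}$ and $(\delta + M^2)^{(p-2)/2}$, so by \eqref{elliptic1}--\eqref{elliptic3} the restriction of \eqref{eq:PDE1} to $\Omega'$ is a quasilinear equation whose coefficient matrix $a_{ij}(\he u) := (\partial a_i/\partial\xi_j)(\he u)$ enjoys two-sided bounds depending only on $p, l, L, \delta, M$ --- it is uniformly subelliptic --- with $C^\infty$ structure functions (as happens for the model $a_i(\xi) = (\delta + |\xi|^2)^{(p-2)/2}\xi_i$ when $\delta > 0$).

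Second, I would invoke \cite[Theorem 1.1]{DM09}, whose hypotheses on the vector-field frame are met on $\SU(3)$, to conclude that a weak solution of such a uniformly subelliptic equation is $C^\infty$. The mechanism is: differentiate the equation along the horizontal directions $X_1,\dots,X_6$; each $X_k u$ then solves a linear subelliptic equation with bounded measurable coefficients, but the brackets $[X_k, X_i]u$ produced in the process are first order in the \emph{vertical} directions $X_7, X_8$ (read off Table \ref{tab:hresult}), so one must run, in tandem, Caccioppoli estimates for $\vv u$ --- the $\eps = 0$ analogues of Lemmas \ref{lemma:vertical1}--\ref{lemma:vertical4} --- and bootstrap them with the subelliptic Sobolev inequality to reach $\he u, \vv u \in L^q_{\loc}$ for every $q < \infty$, hence $\he u \in C^\alpha_{\loc}$ by De Giorgi--Nash--Moser. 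With the coefficients $a_{ij}(\he u)$ now H\"older continuous, subelliptic Schauder estimates give $u \in C^{2,\alpha}_{\loc}$; differentiating once more and iterating yields $u \in C^\infty(\Omega')$, and since $\Omega'$ was arbitrary, $u \in C^\infty(\Omega)$.

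The genuine obstacle --- dealt with in \cite{DM09} and foreshadowed by the commutator table --- is this vertical bootstrap: controlling the derivatives in the directions $X_7, X_8$, which do not appear in the ellipticity of \eqref{eq:PDE1} at all. In a nilpotent (Carnot) group the iterated brackets terminate and the error terms sit in lower strata; but $\SU(3)$ is semisimple, not nilpotent, so every bracket must be tracked via the root-space decomposition of $\su(3)$, which is precisely why the explicit commutators are listed. Everything else --- the reduction to a uniformly subelliptic equation and the closing Schauder/bootstrap loop --- is routine once Theorem \ref{thm:Lipschitz} is in hand, so the proof of Theorem \ref{thm:smooth} really is just the conjunction of that theorem with \cite[Theorem 1.1]{DM09}.
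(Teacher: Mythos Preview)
Your proposal is correct and matches the paper's approach exactly: the paper's entire justification for Theorem~\ref{thm:smooth} is the single sentence ``Based on the Lipschitz regularity from Theorem~\ref{thm:Lipschitz} and \cite[Theorem 1.1]{DM09} we have the following result,'' and you have correctly identified that the argument is precisely this conjunction --- bound $|\he u|$ locally via Theorem~\ref{thm:Lipschitz}, observe that with $\delta>0$ the equation becomes uniformly subelliptic, and invoke \cite[Theorem 1.1]{DM09}. Your additional explanation of the mechanism behind \cite{DM09} is accurate commentary but goes beyond what the paper itself provides.
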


We can observe that the estimates from the Lemmas and Corollaries from the previous section are homogeneous in $\eps$. Therefore, by dividing with the corresponding power of $\eps$ and then letting $\eps \to 0$, we obtain the following intrinsic Cacciopoli type inequalities for solutions of \eqref{eq:PDE1}. Similar inequalities were obtained in the case of the Heisenberg group in \cite{MM07,MZGZ09,R18,Z17}. We will use the notation $w = \delta + |\he u|^2$.

\begin{cor}\label{cor:Cacciopoli1}
Let $0< \delta <1$ and $\eta \in C_0^{\infty} (\Omega )$ be such that $0\leq \eta \leq 1$.
Then there exists a constant $c>0$ depending only on $p$, $l$ and $L$ such that for any solution $u \in C^{\infty}(\Omega)$ of \eqref{eq:PDE1} the following inequalities hold:

(1) If $\beta \geq 0$, then
\begin{multline}\label{ineq:Cacciopoli1}
\int_{\Omega} \eta^2 \, w^{\frac{p-2}{2}} \, |\vv u|^{2\beta} \,
|\he \vv u |^2 \, dx 
\leq c \int_{\Omega} |\he \eta|^2 \,  w^{\frac{p-2}{2}} \, |\vv u |^{2\beta+2} \, dx\\
 + c (\beta + 1)^2 \int_{\Omega}	\eta^2 \, w^{\frac{p}{2}} \, |\vv u|^{2\beta} \, dx .
\end{multline}

(2) It $\beta \geq 0$, then
\begin{multline}\label{ineq:Cacciopoli2}
\int_{\Omega} \eta^2 \, w^{\frac{p-2}{2}+\beta} \,
|\he \he u |^2 \, dx 
\leq c (\beta +1)^4 \int_{\Omega} \eta^2 \,  w^{\frac{p-2}{2}+\beta} \, |\vv u |^{2} \, dx\\
 + c (\beta+1)^2 \int_{\Omega}	(\eta^2+|\he \eta|^2 + \eta|\vv\eta| )\, w^{\frac{p}{2}+\beta} \, dx .
\end{multline}

(3) It $\beta \geq 1$, then
\begin{multline}\label{ineq:Cacciopoli3}
\int_{\Omega} \eta^{2\beta+2} \, w^{\frac{p-2}{2}} \, |\vv u|^{2\beta} \,
|\he \he u |^2 \, dx \\
\leq c (\beta +1)^4 \| \he\eta \|^2_{L^{\infty}} \int_{\Omega} \eta^{2\beta} \,  w^{\frac{p}{2}} \, |\vv u |^{2\beta-2} \, |\he\he u|^2 dx \, .\\
 \end{multline}

(4) If $\beta \geq 1$, then
\begin{multline}\label{ineq:Cacciopoli4}
\int_{\Omega} \eta^{2\beta+2} \, w^{\frac{p-2}{2}} \, |\vv u|^{2\beta} \,
|\he \he u |^2 \, dx \\
\leq c^{\beta} (\beta +1)^{4\beta} \| \he \eta \|^{2\beta}_{L^{\infty}} \int_{\Omega} \eta^{2} \,  w^{\frac{p-2}{2}+\beta} \, |\he \he u|^2 dx \, .\\
 \end{multline}	

(5) If $\beta \geq 0$, then
\begin{multline}\label{ineq:Cacciopoli5}
\int_{\Omega} \eta^2 \, w^{\frac{p-2}{2}+\beta}\, |\he \he u|^2 \, dx\\
\leq c (\beta+1)^{12} \, \left(1+ ||\he \eta||^2_{L^{\infty}} + ||\vv \eta||_{L^{\infty}} \right) \, \int_{\supp \eta}	 w^{\frac{p}{2}+\beta} \, dx.
\end{multline}
\end{cor}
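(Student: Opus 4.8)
The plan is to deduce \eqref{ineq:Cacciopoli1}--\eqref{ineq:Cacciopoli5} from their Riemannian counterparts \eqref{ineq:vertical1}, \eqref{ineq:vertical2}, \eqref{ineq:vertical3}, \eqref{ineq:vertical4} and \eqref{ineq:vertical5} of Section \ref{sec:5SU(3)} by a scaling argument followed by the limit $\eps\to0$. Since $\delta>0$, Theorem \ref{thm:smooth} already guarantees that weak solutions of \eqref{eq:PDE1} are $C^\infty$, so the statement is non-vacuous and every term below is classical. Given $\eta\in C_0^\infty(\Omega)$, I would fix a ball $B$ with $\supp\eta\subset B\subset\subset\Omega$ and, for $0<\eps<1$, let $\ue\in C^\infty(B)$ solve \eqref{eq:PDE2} in $B$ with $\ue=u$ on $\partial B$; as in the proof of Theorem \ref{thm:Lipschitz} one has $\ue\to u$ as $\eps\to0$ (see \cite{CC16}), and Theorem \ref{thm:Lipschitz} gives an $\eps$-uniform bound for $|\nae\ue|$ on $B'\subset\subset B$. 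The only structural inputs needed are the identities built into the approximating fields: $\ve=\eps\,\vv$ (hence $|\ve\ue|^{2k}=\eps^{2k}|\vv\ue|^{2k}$ and $|\nae\ve\ue|^2=\eps^2|\nae\vv\ue|^2$), $\we=\delta+|\he\ue|^2+\eps^2|\vv\ue|^2$, $\nae\eta=(X_1\eta,\dots,X_6\eta,\eps X_7\eta,\eps X_8\eta)$, and $|\vv\ue|^2\le 2|\nae\nae\ue|^2$, the last coming from Table \ref{tab:hresult}.

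First I would record the $\eps$-homogeneity of each Section \ref{sec:5SU(3)} estimate after the substitution $\ve\ue=\eps\vv\ue$: inequality \eqref{ineq:vertical1} scales like $\eps^{2\beta+2}$ on both sides, \eqref{ineq:vertical3} like $\eps^{2\beta}$ (using $\eps^2|\ve\ue|^{2\beta-2}=\eps^{2\beta}|\vv\ue|^{2\beta-2}$ on the right), while \eqref{ineq:vertical2}, \eqref{ineq:vertical4} and \eqref{ineq:vertical5} carry no explicit power of $\eps$. Dividing \eqref{ineq:vertical1} by $\eps^{2\beta+2}$ and \eqref{ineq:vertical3} by $\eps^{2\beta}$, and leaving the other three as they stand, I obtain for each of (1)--(5) an inequality that is \emph{uniform in} $\eps$ and has exactly the desired shape, with the Riemannian data $(\we,\vv\ue,\nae\nae\ue,\nae\vv\ue,\nae\eta)$ in place of the intrinsic data $(w,\vv u,\he\he u,\he\vv u,\he\eta)$. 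This is the same division by which \eqref{ineq:vertical4} was obtained from \eqref{ineq:vertical3}.

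Next I would pass to the limit $\eps\to0$. The $\eps$-uniform Lipschitz bound of Theorem \ref{thm:Lipschitz}, together with $\ue\to u$, forces $\he\ue\to\he u$ locally uniformly and $\we\to w$, while $|\nae\eta|^2=|\he\eta|^2+\eps^2|\vv\eta|^2\to|\he\eta|^2$. The $\eps$-uniform second-order estimates \eqref{ineq:vertical2.1} and \eqref{ineq:vertical5} give weak $W^{2,2}_{\loc}$-compactness of $\{\ue\}$ with limit $u$; combining weak lower semicontinuity on the left with these bounds and the Lipschitz estimate on the right, the Riemannian quantities $\nae\nae\ue$, $\nae\vv\ue$, $\we$ are replaced by their intrinsic limits $\he\he u$, $\he\vv u$, $w$, which yields \eqref{ineq:Cacciopoli1}--\eqref{ineq:Cacciopoli5}.

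The main obstacle is precisely this last step: one must make the convergence $\ue\to u$ quantitative enough (genuine weak $W^{2,2}_{\loc}$-compactness of $\{\ue\}$, plus locally uniform convergence of $\he\ue$ so that the weights $\we^{\frac{p-2}{2}+\beta}$ converge) and verify that the non-horizontal contributions hidden inside $|\nae\nae\ue|^2$, $|\nae\vv\ue|^2$, $|\nae\eta|^2$, each carrying a factor $\eps^2$, actually become negligible in the limit. A clean way to bypass the convergence issue is to rerun the proofs of Lemmas \ref{lemma:vertical1}--\ref{lemma:vertical4} and the derivation of \eqref{ineq:vertical5} directly on the smooth function $u$ with the intrinsic fields $X_1,\dots,X_8$: since the extension functions $a_7,a_8$ do not appear in \eqref{eq:PDE1} and the structure constants in Table \ref{tab:hresult} are finite, no factors $\eps^{\pm1}$ ever arise, and the $\eps$-exponents above merely record how the intrinsic computation groups its error terms; smoothness of $u$ (Theorem \ref{thm:smooth}) makes every integration by parts and every test-function choice legitimate. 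In either version the computational core is already contained in Section \ref{sec:5SU(3)}, and what remains is the bookkeeping of $\eps$-powers together with a routine limiting (or smooth-testing) argument.
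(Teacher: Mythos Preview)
Your proposal is correct and matches the paper's own argument essentially verbatim: the paper simply observes that the estimates of Section~\ref{sec:5SU(3)} are homogeneous in $\eps$, divides by the appropriate power, and lets $\eps\to0$. Your identification of the specific scalings ($\eps^{2\beta+2}$ for \eqref{ineq:vertical1}, $\eps^{2\beta}$ for \eqref{ineq:vertical3}, none for the rest) is accurate, and your alternative of rerunning the computations directly on the smooth $u$ is a legitimate way to sidestep the convergence bookkeeping that the paper leaves implicit.
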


In case of $\delta =0$ the key result in proving the $C^{1,\alpha}$ regularity of weak solutions of equation \eqref{eq:PDE1}  is the following lemma:

\begin{lemma} \label{lemma:DeGiorgi}
Let $\delta >0$, $u \in C^{\infty} (\Omega)$ be a solution of \eqref{eq:PDE1} and consider a CC-ball $B_{3r_0} \subset \Omega$.
For any $q  \geq 4$ there exists a constant $c>0$, depending only on $\G, p, l, L, r_0$ and $q$, such that for all $k \in {\mathbb R}$, $|k| < M$, $0 <r' < r < r_0$, $s \in I$  we have  
\begin{multline} \label{ineq:superCacciopoli}
\int_{A^+_{s,k,r'}} \, (\delta + |\he u|^{2})^{\frac{p-2}{2}} \, |\he (X_s u - k)^+|^2 \, dx \\
\leq  \frac{c}{(r-r')^2} \, \int_{A^+_{s,k,r}} \, (\delta + |\he u|^2)^{\frac{p-2}{2}} \, ((X_s u - k)^+)^2 \, dx \\
+ c (\delta + M^2)^{\frac{p}{2}} \, |A^+_{s,k,r}|^{1-\frac{2}{q}} \, ,
\end{multline}
where  $M = \sup_{B_{2r_0}} |\vv u|$ and  $A^+_{s,k,r} = \{ x \in B_r \, : \, X_s u (x) - k > 0\}$.
\end{lemma}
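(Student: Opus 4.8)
The plan is to use $\varphi = \eta^2 (X_s u - k)^+$ (up to a power of $w$) as a test function in the equation differentiated along $X_s$, where $\eta$ is a cutoff equal to $1$ on $B_{r'}$, supported in $B_r$, with $|\he\eta| \le c/(r-r')$. First I would differentiate \eqref{eq:PDE1} in the direction $X_s$, $s \in I$ (the index set of the chosen root vectors $X_7, X_8$, or more generally the Cartan directions), and record the resulting equation for $v := X_s u$: schematically $\sum_i X_i(a_{ij}(\he u) X_i X_j u) = \sum (\text{commutator terms})$, where the right-hand side collects all terms produced by commuting $X_s$ past the $X_i$'s. Crucially, since $s$ is a vertical/torus index, the commutators $[X_s, X_i]$ land back in the horizontal layer (as Table \ref{tab:hresult} shows, $[X_7, X_i] = \pm 2\eps X_j$ or $\pm 4\eps X_j$ type relations), so these error terms involve only first horizontal derivatives of $u$ and are controlled using the Lipschitz bound $|\he u| \le M$ and $|\vv u| \le M$ from Theorem \ref{thm:Lipschitz} (here already passed to the $\delta>0$ smooth solution via Theorem \ref{thm:smooth}).

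Next I would test the equation for $v=X_s u$ against $\varphi = \eta^2 (v-k)^+$. The principal term yields, by ellipticity \eqref{elliptic1}, a lower bound
$$
l \int_{A^+_{s,k,r}} \eta^2 w^{\frac{p-2}{2}} |\he (v-k)^+|^2 \, dx
$$
plus a term where $X_i$ hits $\eta^2$, absorbed by Cauchy–Schwarz into the principal term and $\frac{c}{(r-r')^2}\int w^{\frac{p-2}{2}}((v-k)^+)^2$. The commutator error terms, being of the form $\int (\text{bounded}) \cdot |\he u|^{p-1} \cdot \eta^2 |(v-k)^+|$ or $\int |\he u|^{p-2}|\he\vv u|\,\eta^2|(v-k)^+|$, are estimated using $|\he u|, |\vv u| \le M$ and supported on $A^+_{s,k,r}$; the terms with no extra smallness are bounded by $c(\delta+M^2)^{p/2}|A^+_{s,k,r}|$, and those with a second derivative factor are handled by Young's inequality, splitting off a small multiple of $\int \eta^2 w^{\frac{p-2}{2}}|\he\vv u|^2$ which is then reabsorbed using the Caccioppoli inequality \eqref{ineq:Cacciopoli1} of Corollary \ref{cor:Cacciopoli1} with $\beta=0$ (again bounding $|\vv u| \le M$ on the support). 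The exponent $1-\tfrac2q$ rather than $1$ on $|A^+_{s,k,r}|$ is obtained by using Hölder's inequality on the measure terms together with the a priori bound $|A^+_{s,k,r}| \le |B_{r_0}|$, trading a power of the (fixed, finite) ball volume for the flexibility of a sub-unit exponent, with the constant then absorbing the resulting power of $|B_{r_0}|$ and hence depending on $r_0$ and $q$.

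The main obstacle I expect is bookkeeping the commutator terms: unlike the nilpotent Heisenberg setting, $[[X_s, X_i], X_j]$ need not vanish, so after the two integrations by parts needed to move derivatives off $v$ and onto test-function factors one generates a second generation of error terms; these must be shown to be again of lower order (at most one horizontal derivative of $u$ beyond what the Lipschitz bound controls, times bounded coefficients), using repeatedly that the structure constants in Table \ref{tab:hresult} are explicit and that every bracket with a torus element stays in a controlled layer. A secondary technical point is that $w^{\frac{p-2}{2}}$ is only a weight, not bounded below unless $\delta>0$; this is why the lemma is stated for $\delta>0$ smooth solutions, and the final $C^{1,\alpha}$ conclusion for $\delta=0$ comes afterward by the usual approximation $\delta \to 0$ with estimates uniform in $\delta$, combined with a De Giorgi iteration fed by \eqref{ineq:superCacciopoli}.
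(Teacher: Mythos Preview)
There are two genuine gaps.

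\textbf{First, you have misidentified the direction $s$.} In this lemma $s$ ranges over the \emph{horizontal} indices (the paper carries out the proof for $s=1$), because the goal is H\"older continuity of the components of $\he u$. You assumed $s$ is a torus direction, so that all commutators $[X_s,X_i]$ stay horizontal and the error terms are harmless. For horizontal $s$ the situation is exactly the opposite: $[X_1,X_2]=-X_7$, so differentiating the equation along $X_1$ and integrating by parts produces a genuine vertical term $\int X_7 a_2\,\phi$, which after one more commutation becomes a term of the form $\int \eta^2 w^{(p-2)/2}\,|\he\vv u|\,v\,dx$ with $v=(X_1u-k)^+$. This is the hard term; your commutator analysis, based on the wrong choice of $s$, does not see it.

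\textbf{Second, your treatment of the mixed second-derivative term does not close.} You propose to apply Young's inequality to $|\he\vv u|\,v$ and then ``reabsorb'' $\int \eta^2 w^{(p-2)/2}|\he\vv u|^2$ via \eqref{ineq:Cacciopoli1}. But \eqref{ineq:Cacciopoli1} bounds that integral by quantities supported on all of $\supp\eta$, not on $A^+_{s,k,r}$; you would obtain a right-hand side of size $(\delta+M^2)^{p/2}|B_r|$, with no $|A^+_{s,k,r}|$ factor at all, and the De Giorgi iteration cannot start. The paper instead sets $A_0=\int \eta^2 w^{(p-2)/2}|\he\vv u|\,v$, bounds $A_0\le c(\delta+M^2)^{(p-2)/4}|A^+|^{1/2}\Gamma_0^{1/2}$ by H\"older, and then controls $\Gamma_\beta=\int \eta^2 w^{(p-2)/2}|\he\vv u|^2|\vv u|^\beta v^2$ and $\Lambda_\beta=\int \eta^2 w^{p/2}|\vv u|^\beta v^2$ through a separate recursive argument: one differentiates the equation in the \emph{vertical} directions, tests with $\eta^2 v^2|\vv u|^\beta X_7u$, and obtains $\Gamma_\beta\le c\kappa(\Gamma_{2\beta+2}^{1/2}+\Lambda_{2\beta+2}^{1/2}+\cdots)$, where $\kappa^2$ is essentially the sum of the two terms you want on the right of \eqref{ineq:superCacciopoli}. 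Iterating $m$ times and using \eqref{ineq:Cacciopoli4}--\eqref{ineq:Cacciopoli5} to terminate yields $\Gamma_0\le c\sum(\kappa^2)^{1-2^{-(m+i+1)}}(\delta+M^2)^{1+\cdots}$, and only then does Young's inequality give $A_0\le \tfrac12\kappa^2 + c(\delta+M^2)^{p/2}|A^+|^{1-2/q}$, with $m$ chosen from $q$. Your proposal is missing this entire iteration, which is the heart of the proof.
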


\begin{proof} 
We will present the proof for $s=1$, the other cases are identical. Let us denote $v = (X_1u - k)^+$. As in Section \S 3, let us differentiate equation \eqref{eq:PDE1} with respect to $X_1$, multiply it by a $\phi \in C_0^{\infty} (\Omega)$ and integrate. In this way we obtain
\begin{multline}\label{eq:horizontal1}
\sum_{i,j} \int_{\Omega} a_{ij} \, X_j X_1 u \, X_i \phi \, dx 
= -\int_{\Omega} X_7 a_2 \, \phi \, dx -  \int_{\Omega} a_3 \, X_5 \phi \, dx + ...\\
+ \sum_i \int_{\Omega} a_{i2} \, X_7 u \, X_i \phi \, dx - \sum_i \int_{\Omega} a_{i3} \, X_5 u \, X_i \phi \, dx + ... 	
\end{multline}
 Consider a cut-off function $\eta \in C_0^{\infty} (B_r )$ such that $\eta \equiv 1$ in $B_{r'}$, $||\he \eta ||_{L^{\infty}} \leq \frac{2}{r-r'}$ and 
 $||\vv \eta ||_{L^{\infty}} \leq \frac{8}{(r-r')^2}$ . After substituting the test function $\phi = \eta^2 v$ in equation \eqref{eq:horizontal1}, we get the following terms:
 $$L_1 + L_2 = R_1 + R_2 + ... + R_3 + R_4  + ...$$  
We  will estimate each term. Note that $X_j X_1 u (x) = X_j v (x)$ if $v(x) \neq 0$ and we can assume $|B_{2r_0}| \leq 1$.
\begin{multline*} 
\shoveright{L_1 = \sum_{i,j} \int_{B_r} a_{ij} \, X_j X_1 u \, \eta^2 \, X_i v \, dx 
\geq l \int_{B_r} \eta^2 \, w^{\frac{p-2}{2}} \, | \he v |^2 \, dx \, .}	  
\end{multline*}

\begin{multline*}
L_2 = \sum_{i,j} \int_{B_r} a_{ij} \, X_j X_1 u \, 2 \eta \, X_i \eta \, v dx
\leq c \int_{B_r} w^{\frac{p-2}{2}} \, |\he v| \, \eta \, |\he \eta | \, v \, dx \\
\leq \frac{l}{100} \int_{B_r} \eta^2 \, w^{\frac{p-2}{2}} \, |\he v|^2 \, dx + c \int_{B_r} |\he \eta|^2 \, w^{\frac{p-2}{2}} \, v^2 \, dx \, . 
\end{multline*}

\begin{multline*}
R_1 = -\int_{B_r} X_7 a_2 \, \eta^2 \, v \, dx =- \sum_i \int_{B_r} a_{2i} \, X_7 X_i u \, \eta^2 \, v \, dx	\\
\shoveleft{= -\sum_i \int_{B_r} a_{2i} \, X_i X_7 u \, \eta^2 \, v \, dx +4 \int_{B_r} a_{21} \, X_2 u \, \eta^2 \, v \, dx + ...}\\
\shoveleft{\leq c \int_{B_r} \eta^2 \, w^{\frac{p-2}{2}} \, |\he \vv u| \, v \, dx + c \int_{B_r} \eta^2 \, w^{\frac{p-1}{2}} \, v \, dx}\\
\shoveleft{\leq c \int_{B_r} \eta^2 \, w^{\frac{p-2}{2}} \, |\he \vv u| \, v \, dx + c \int_{B_r} \eta^2 \, w^{\frac{p-2}{2}} \, v^2 \, dx + c \int_{A^+_{1,k,r}} \eta^2 \, w^{\frac{p}{2}} \, dx}\\
\shoveleft{\leq c \int_{B_r} \eta^2 \, w^{\frac{p-2}{2}} \, |\he \vv u| \, v \, dx + c \int_{B_r} \eta^2 \, w^{\frac{p-2}{2}} \, v^2 \, dx}\\ 
\shoveright{+ c (\delta+M^2 )^{\frac{p}{2}} \, | A^+_{1,k,r} |}\\
\shoveleft{\leq c \int_{B_r} \eta^2 \, w^{\frac{p-2}{2}} \, |\he \vv u| \, v \, dx + c \int_{B_r} \eta^2 \, w^{\frac{p-2}{2}} \, v^2 \, dx }\\
+ c (\delta+M^2 )^{\frac{p}{2}} \, | A^+_{1,k,r} |^{1-\frac{2}{q}} \, .
\end{multline*}

\begin{multline*}
R_2 = \frac{1}{2} \int_{B_r} a_3 \, (\eta^2 X_5 v + 2\eta \, X_5 \eta \, v )\, dx \\
\shoveleft{\leq c \int_{B_r} w^{\frac{p-1}{2}} \, \eta^2 \, |\he v| \, dx + c \int_{B_r} w^{\frac{p-1}{2}} \, \eta \, |\he \eta| \, v \, dx }\\
\shoveleft{\leq \frac{l}{100} \int_{B_r} \eta^2 \, w^{\frac{p-2}{2}} \, |\he v|^2 \, dx + c\int_{A^+_{1,k,r}} \eta^2 \, w^{\frac{p}{2}} \, dx}\\ 
\shoveright{+ c \int_{B_r} | \he \eta |^2 \, w^{\frac{p-2}{2}} \,  v^2 \, dx }\\
\shoveleft{\leq \frac{l}{100} \int_{B_r} \eta^2 \, w^{\frac{p-2}{2}} \, |\he v|^2 \, dx + c \int_{B_r} |\he \eta|^2 \, w^{\frac{p-2}{2}} \,  v^2 \, dx} \\ +
c (\delta+M^2 )^{\frac{p}{2}} \, | A^+_{1,k,r} |^{1-\frac{2}{q}} \, .
\end{multline*}
\begin{multline*}
R_3 =  \sum_i \int_{\Omega} a_{i2} \, X_7 u \, (\eta^2 X_i v + 2\eta \, X_i \eta \, v ) \, dx \\
\shoveleft{\leq c \int_{B_r} w^{\frac{p-2}{2}} \, |\vv u| \,  \eta^2 \, |\he v| \, dx + c \int_{B_r} w^{\frac{p-2}{2}} \, |\vv u| \, \eta \, |\he \eta| \, v \, dx} \\
\shoveleft{\leq \frac{l}{100} \int_{B_r} \eta^2 \, w^{\frac{p-2}{2}} \, |\he v|^2 \, dx + c \int_{B_r} | \he \eta |^2 \, w^{\frac{p-2}{2}} \,  v^2 \, dx }\\
+ 	c\int_{B_r} \eta^2 \, w^{\frac{p-2}{2}} \, |\vv u|^2 \, dx \, .
\end{multline*}

The last term needs more attention. We will use the H\"{o}lder inequality and inequalities \eqref{ineq:Cacciopoli4} and \eqref{ineq:Cacciopoli5}. All multipliers involving $q$ and $r_0$ will be included in the general constant $c$.

\begin{multline*}
c\int_{B_r} \eta^2 \, w^{\frac{p-2}{2}} \, |\vv u|^2 \, dx \\
\shoveleft{\leq c\left( \int_{A^+_{1,k,r}} w^{\frac{p-2}{2}} \, dx \right)^{1-\frac{2}{q}} \; 	\left( \int_{B_r} \eta^q \, w^{\frac{p-2}{2}} \, |\vv u|^q \, dx \right)^{\frac{2}{q}}} \\
\shoveleft{\leq c(\delta +M^2 )^{\frac{(p-2)(q-2)}{2q}} \, |A^+_{1,k,r}|^{1-\frac{2}{q}} \, 
\left( \int_{B_r} \eta^q \, w^{\frac{p-2}{2}} \, |\vv u|^{q-2} \, |\he\he u|^2 \, dx \right)^{\frac{2}{q}} }\\
\shoveleft{\leq c(\delta +M^2 )^{\frac{(p-2)(q-2)}{2q}} \, |A^+_{1,k,r}|^{1-\frac{2}{q}}} \\ 
\cdot \left( c^{\frac{q-2}{2}} \, \left( \frac{q}{2} \right)^{2q-4} \, \left( \frac{8}{r_0} \right)^{q-2} \, \int_{B_{\frac{5r_0}{4}}}  \, w^{\frac{p+q-4}{2}} \, |\he\he u|^2 \, dx \right)^{\frac{2}{q}}\\
\shoveleft{\leq c (\delta +M^2 )^{\frac{(p-2)(q-2)}{2q}} \, |A^+_{1,k,r}|^{1-\frac{2}{q}}} 
\cdot \left( \int_{B_{\frac{6r_0}{4}}}  \, w^{\frac{p+q-2}{2}} \, dx \right)^{\frac{2}{q}}\\
\shoveleft{\leq c (\delta +M^2 )^{\frac{(p-2)(q-2)}{2q}} \, |A^+_{1,k,r}|^{1-\frac{2}{q}}} 
\cdot \left( \delta+M^2 \right)^{\frac{p+q-2}{q}}\\
\shoveleft{\leq c (\delta +M^2 )^{\frac{p}{2}} \, |A^+_{1,k,r}|^{1-\frac{2}{q}}} \, .\\
\end{multline*}
The estimate of $R_4$ is similar to the estimate of $R_2$. In conclusion, at this stage for a constant $c_0>0$, we have the following estimate:
\begin{multline}\label{ineq:FirstEstimate}
\int_{B_r} \eta^2 \, w^{\frac{p-2}{2}} \, |\he v|^2 \, dx \leq
c_0 \int_{B_r} \eta^2 \, w^{\frac{p-2}{2}} \, |\he \vv u| \, v \, dx \\ 	
+ c_0 \int_{B_r} (\eta^2 + |\he \eta|^2 )\, w^{\frac{p-2}{2}} \, v^2 \, dx + 
c_0 \, (\delta+M^2)^{\frac{p}{2}} \, |A_{1,k,r}^+ |^{1-\frac{2}{q}} \, .
\end{multline}
It is left to estimate the term
$$A_0 = c_0 \int_{B_r} \eta^2 \, w^{\frac{p-2}{2}} \, |\he\vv u| \, v \ dx \, .$$
By introducing the term
$$\kappa = \left( \int_{B_r} (\eta^2 + |\he \eta|^2 ) \, w^{\frac{p-2}{2}} \, v^2 \, dx + \int_{B_r} \eta^2 \, w^{\frac{p-2}{2}} \, |\he v |^2 \, dx \right)^{\frac{1}{2}} \, ,$$
inequality \eqref{ineq:FirstEstimate} can be rewritten as 
\begin{multline}\label{ineq:kappa}
\kappa^2 \leq A_0 + (c_0 +1) \int_{B_r} (\eta^2 + |\he \eta|^2 )\, w^{\frac{p-2}{2}} \, v^2 \, dx \\
+ c_0 \, (\delta+M^2)^{\frac{p}{2}} \, |A_{1,k,r}^+ |^{1-\frac{2}{q}} \, .
\end{multline}
We will focus now on $A_0$. By H\"{o}lder's inequality we obtain
\begin{multline*}
A_0 \leq c_0  \left( \int_{A^+_{1,k,r}} \eta^2 \, w^{\frac{p-2}{2}} \, dx \right)^{\frac{1}{2}} \; 	\left( \int_{B_r} \eta^2 \, w^{\frac{p-2}{2}} \, |\he \vv u|^2 \, v^2 \, dx \right)^{\frac{1}{2}}	 \\
\leq c_0  (\delta + M^2)^{\frac{p-2}{4}} \, |A^+_{1,k,r}|^{\frac{1}{2}} \; 	\left( \int_{B_r} \eta^2 \, w^{\frac{p-2}{2}} \, |\he \vv u|^2 \, v^2 \, dx \right)^{\frac{1}{2}}\, .	 
\end{multline*}
For $\beta \geq 0$, we introduce the following terms:
\begin{align*}
& \Gamma_{\beta} = \int_{B_r} \eta^2 \, w^{\frac{p-2}{2}} \, |\he \vv u|^2 \, |\vv u|^{\beta} \, v^2 \, dx \, , \\
& \Lambda_{\beta} = \int_{B_r} \eta^2 \, w^{\frac{p}{2}} \, |\vv u|^{\beta} \, v^2 \, dx \, .
\end{align*}
Note that we have
\begin{equation}\label{ineq:A0}
	A_0 \leq c_0  (\delta + M^2)^{\frac{p-2}{4}} \, |A^+_{1,k,r}|^{\frac{1}{2}} \, \Gamma_0^{\frac{1}{2}} \, .
\end{equation}

By the fact that $v^2 \leq 4 (\delta+M^2)$ and after repeated use of the inequalities \eqref{ineq:Cacciopoli1}-\eqref{ineq:Cacciopoli5}, we find a constant $c>0$ depending on $p, l, L, r_0, \beta$ such that
\begin{equation}\label{ineq:GammaLambda}
\Gamma_\beta + \Lambda_\beta \leq c (\delta+M^2)^{\frac{p+\beta+2}{2}} \, .	
\end{equation}
Applying the H\"{o}lder inequality to $\Lambda_\beta$, for $\beta>0$ we get that
\begin{equation}\label{ineq:LambdaBeta}
\Lambda_\beta \leq c \kappa \, (\delta+M^2 )^{\frac{1}{2}}	\, \Lambda_{2\beta}^{\frac{1}{2}} \, ,
\end{equation}
and after iterating \eqref{ineq:LambdaBeta} $m$ times, we find that there exists a constant $c>0$ depending also on $p, l, L, r_0, \beta$ and $m$ such that
 \begin{equation}\label{ineq:LambdaBetam}
\Lambda_\beta \leq c \kappa^{2 - \frac{1}{2^{m-1}}} \, (\delta+M^2 )^{1- \frac{1}{2^m}}	\, \Lambda_{2^m\beta}^{\frac{1}{2^m}} \, ,
\end{equation}

To estimate $\Gamma_\beta$ let us differentiate \eqref{eq:PDE1} with respect to $X_7$ to get
\begin{multline*}
	\sum_{i,j =1}^6 \int_{\Omega} a_{ij} \, X_j X_7 u \, X_i \phi \, dx \\
	= 4 \int_{\Omega} a_1 \, X_2 \phi \, dx + ... +4 \sum_{i=1}^6 \int_{\Omega} a_{i1} X_2 u \, X_i \phi \, dx - ... 
\end{multline*}
We will use $\phi = \eta^2 \, v^2 \, |\vv u|^{\beta} \, X_7 u$. In $X_i \phi$ we will order the four terms in the following way:
\begin{multline*}
	X_i \phi = 2 \eta \, X_i \eta \, v^2 \, |\vv u|^\beta \, X_7 u + \eta^2 \, 2v \, X_i v \, |\vv u|^\beta \, X_7 u\\
	+ \eta^2 \, v^2 \, \frac{\beta}{2} |\vv u|^{\beta-2} \, X_i (|\vv u|^2 ) \, X_7 u + \eta^2 \, v^2 \, |\vv u|^\beta \, X_i X_7 u \, .
\end{multline*}
By repeating the same steps for $X_8$ and adding the two equations we get the following terms:
\begin{equation*}
	L_1 + L_2 + L_3 + L_4 = \sum_{i=1}^4 R_{1i} + ... +  \sum_{i=1}^4 R_{2i} + ...
\end{equation*}
For each term we have the following estimates.
\begin{multline*}
L_1 \leq c \int_{\Omega} w^{\frac{p-2}{2}} \, |\he \vv u| \, \eta \, |\he \eta| \, v^2 \, |\vv u|^{\beta+1} \, dx\\
\shoveleft{\leq c \left( \int_{\Omega} |\he \eta|^2 w^{\frac{p-2}{2}} \, v^2 \, dx 	 \right)^{\frac{1}{2}} }\\
\cdot \left( \int_{\Omega} \eta^2 \, w^{\frac{p-2}{2}} \, |\he\vv u|^2 \, |\vv u|^{2\beta+2} \, \, v^2 \, dx \right)^{\frac{1}{2}} \leq c \, \kappa \, \Gamma_{2\beta+2}^{\frac{1}{2}} \, . 
\end{multline*}

\begin{multline*}
\shoveright{L_2 \leq \text{similarly to $L_1$} \leq c \,  \kappa \, \Gamma_{2\beta+2}^{\frac{1}{2}} \, .}
\end{multline*}

\begin{multline*}
\shoveright{L_3 \geq \frac{l \beta}{4} \int_{\Omega} \eta^2 \, w^{\frac{p-2}{2}} \, |\he (|\vv u |^2 )|^2 \, |\vv u|^{\beta-2} \, v^2 \, dx \, .	}
\end{multline*}

\begin{multline*}
\shoveright{L_4 \geq l \int_{\Omega} \eta^2 \, w^{\frac{p-2}{2}} \, |\he \vv u |^2 \, |\vv u|^{\beta} \, v^2 \, dx \, 	= l \Gamma_\beta \, .}
\end{multline*}

By H\"{o}lder's inequality we get 
\begin{multline*}
\shoveright{R_{11} + R_{12} + R_{21} + R_{22} \leq c \, \kappa \, \Lambda_{2\beta+2}^{\frac{1}{2}} \, .	}
\end{multline*}

Young's inequality leads to
\begin{multline*}
\shoveright{R_{13} + R_{14} + R_{23} + R_{24} \leq  \frac{l}{100} \Gamma_{\beta} + c \, \kappa \, (\beta +1)^2 \, (\delta + M^2)^{\frac{1}{2}} \, \Lambda_{2\beta}^{\frac{1}{2}} \, .	}	
\end{multline*}

Therefore in case of $\beta \geq 2$ we obtained the following inequality:
\begin{equation}\label{ineq:GammaBeta}
\Gamma_{\beta} \leq c \, \kappa \, \left( \Gamma_{2\beta+2}^{\frac{1}{2}} + 	\Lambda_{2\beta+2}^{\frac{1}{2}} + (\delta + M^2)^{\frac{1}{2}} \, \Lambda_{2\beta}^{\frac{1}{2}} \right) \, ,
\end{equation}
where the constant $c$ depends on $\beta$.\\
In case of $\beta =0$ the terms $L_3$, $R_{13}$ and $R_{23}$ are missing and, for an integer $m \in {\mathbb N}$,  the estimate for $R_{14} + R_{24}$ can be changed to the following.
\begin{multline*}
R_{14} + R_{24} \leq c \int_{\Omega} w^{\frac{p-1}{2}} \, \eta^2 \, v^2 \, |\he\vv u| \, dx\\
\shoveleft{\leq \frac{l}{100} \int_{\Omega} \eta^2 \, w^{\frac{p-2}{2}} \, |\he\vv u|^2 \, v^2 \, dx + c \int_{\Omega} \eta^2 \, w^{\frac{p}{2}} \, v^2 \, dx }\\
\shoveleft{ \leq \frac{l}{100} \Gamma_0 +  	c (\delta+M^2 ) \int_{\Omega} \eta^2 \, w^{\frac{p-2}{2}} \, v^2 \, dx }\\
\shoveleft{ \leq \frac{l}{100} \Gamma_0 +  	c (\delta+M^2 ) \kappa^{2(1-\frac{1}{2^{m+1}})} \; \left( \int_{\Omega} \eta^2 \, w^{\frac{p-2}{2}} \, v^2 \, dx \right)^{\frac{1}{2^{m+1}}}}\\
\shoveleft{\leq \frac{l}{100} \Gamma_0 +  	c \, \kappa^{2-\frac{1}{2^m}} \, (\delta+M^2 )^{1+\frac{p}{2^{m+2}}} \, .}\\
\end{multline*}
Therefore, we have obtained the following estimate:
\begin{equation}\label{ineq:GammaZero}
\Gamma_0 \leq c \kappa \left( \Gamma_2^{\frac{1}{2}} + \Lambda_2^{\frac{1}{2}} + (\delta+M^2)^{1+ \frac{p}{2^{m+2}}} \, \kappa^{1-\frac{1}{2^m}} \right) \, .	
\end{equation}
In inequality \eqref{ineq:GammaZero} we will have to have to iteratively apply \eqref{ineq:GammaBeta}. First, by using \eqref{ineq:LambdaBeta} and \eqref{ineq:GammaLambda}, we can rewrite \eqref{ineq:GammaBeta} in the following way:
\begin{equation}\label{ineq:GammaBeta2}
\Gamma_\beta \leq c \, \kappa \, \Gamma_{2\beta+2}^{\frac{1}{2}} + c \, \kappa^{2 - \frac{1}{2^m}} \, (\delta+M^2)^{\frac{\beta+2}{2}+\frac{p}{2^{m+2}}} \, .	
\end{equation}
After $m$ iterations of \eqref{ineq:GammaBeta2} and by choosing  $\beta_m = 2^m-2$, we get the following inequality:
\begin{multline}\label{ineq:Gamma2=GammaBeta2}
\Gamma_2 = \Gamma_{\beta_2} \leq c \, \kappa^{\sum_{i=0}^{m-1} \frac{1}{2^i}} \; \Gamma_{\beta_{2+m}}^{\frac{1}{2^m}} +
\sum_{i=0}^{m-1} \kappa^{2-\frac{1}{2^{m+i}}} \; (\delta+M^2 )^{2 + \frac{p}{2^{m+2+i}}} \, .	
\end{multline}
By applying \eqref{ineq:GammaLambda} and \eqref{ineq:Gamma2=GammaBeta2} in \eqref{ineq:GammaZero} we get that
\begin{multline*}
\Gamma_0 \leq c \, \kappa^{2-\frac{1}{2^m}} \; (\delta+M^2)^{1+\frac{p}{2^{m+2}}} + c \, \sum_{i=1}^m \kappa^{2 - \frac{1}{2^{m+i}}} \; (\delta+M^2)^{1+\frac{p}{2^{m+2+i}}}\\
+ 2 c \, \kappa^{2 - \frac{1}{2^{m}}} \; (\delta+M^2)^{1+\frac{p}{2^{m+2}}}	\, .
\end{multline*}
Hence, we obtained a constant $c_1$ such that
\begin{equation}\label{ineq:GammaZerom}
\Gamma_0 \leq c_1 \, \sum_{i=0}^m (\kappa^2)^{1 - \frac{1}{2^{m+i+1}}} \; (\delta+M^2)^{1+\frac{p}{2^{m+i+2}}}	 \, .
\end{equation}
We return now to inequality \eqref{ineq:A0} and obtain
\begin{multline*}
A_0 \leq c_0 (\delta+M^2)^{\frac	{p-2}{4}} \, |A_{1,k,r}^+|^{\frac{1}{2}} \\\cdot \left( c_1 \, \sum_{i=0}^m (\kappa^2)^{1 - \frac{1}{2^{m+i+1}}} \; (\delta+M^2)^{1+\frac{p}{2^{m+i+2}}} \right)^{\frac{1}{2}}\\
\leq c_0 (\delta+M^2)^{\frac{p}{4}} \, |A_{1,k,r}^+|^{\frac{1}{2}} \;
\left( c_1^{\frac{1}{2}} \, \sum_{i=0}^m (\kappa^2)^{\frac{2^{m+i+1}-1}{2^{m+i+2}}} \; (\delta+M^2)^{\frac{p}{2^{m+i+3}}} \right) \\
\leq \sum_{i=0}^m (\kappa^2)^{\frac{2^{m+i+1}-1}{2^{m+i+2}}} \; \left( c_0 c_1^{\frac{1}{2}}\, (\delta+M^2)^{\frac{p(2^{m+i+1}+1)}{2^{m+i+3}}} \, |A_{1,k,r}^+|^{\frac{1}{2}} \right) \, .
\end{multline*}
By applying Young's inequality to each term we obtain
\begin{multline*}
A_0 \leq \sum_{i=0}^m \; \frac{1}{2(m+1)} \kappa^2 	\\ + \sum_{i=0}^m c(m) \left( c_0 \sqrt{c_1} \right)^{\frac{2^{m+i+2}}{2^{m+i+1}+1}} \, (\delta+M^2)^{\frac{p}{2}} \, |A_{1,k,r}^+|^{\frac{2^{m+i+2}}{2^{m+i+2}+2}} \, .
\end{multline*}
By choosing $m \in {\mathbb N}$ such that 
$$1 - \frac{2}{q} \leq \frac{2^{m+2}}{2^{m+2}+2} \, ,$$
and taking into consideration \eqref{ineq:kappa}, we obtain that
\begin{multline*}
A_0 \leq \frac{1}{2} A_0 + 	\frac{c_0 +1}{2} \int_{B_r} (\eta^2 + |\he \eta|^2 )\, w^{\frac{p-2}{2}} \, v^2 \, dx \\
+ (\frac{c_0}{2}+c) \, (\delta+M^2)^{\frac{p}{2}} \, |A_{1,k,r}^+|^{1-\frac{2}{q}} \, . 
\end{multline*}
In conclusion, from \eqref{ineq:FirstEstimate} we get that
\begin{multline*}
\int_{B_r} \eta^2 \, w^{\frac{p-2}{2}} \, |\he v|^2 \, dx \leq
c \int_{B_r} (\eta^2 + |\he \eta|^2 )\, w^{\frac{p-2}{2}} \, v^2 \, dx\\
 + c \, (\delta+M^2)^{\frac{p}{2}} \, |A_{1,k,r}^+ |^{1-\frac{2}{q}} \, ,
\end{multline*}
and this finishes the proof of Lemma \ref{lemma:DeGiorgi}.
\end{proof}

In a similar way we can prove Lemma \ref{lemma:DeGiorgi} for the lower level sets $A_{1,k,r}^-$ and then the proof of Theorem \ref{thm:GradientHolder} relies only on properties of functions belonging to the De Giorgi classes. The De Giorgi-type iteration methods leading to H\"{o}lder continuity are well known and are available in a wide range of spaces, including homogeneous metric measure spaces. For references we quote \cite{G03,KS01,KMM12,LU68,R18,Z17}.

\section{The case of a general semi-simple, compact, connected Lie group}

The proofs of our results are based on the properties of the commutators listed in Table \ref{tab:hresult} and \eqref{epsiloncom}. This is how we can handle the fact that we don't have a nilpotent stucture.
Similar properties of commutators of vector fields hold in any compact, connected, semi-simple Lie group. For the sake of clarity we presented all details for the case of $\SU (3)$, which is the simplest non-nilpotent group case that takes into account all possible commutators present in the general case.   

Next, we describe those algebraic and analytic properties of semi-simple, compact, connected Lie groups, which allow \textit{mutatis mutandis} for the extension of our proofs in $\SU(3)$ to any semi-simple, compact, connected Lie group.

Let $\G$ be a semi-simple, connected, compact matrix Lie group and $\Lie$ its Lie algebra. Note that every compact Lie group is isomorphic to a compact group of matrices \cite[Corollary 2.40]{HM06}, so there is no loss of generality assuming that $\G$ is a matrix group.

 On $\Lie$ we consider an inner product with properties
$$\langle \Ad g (X) , \Ad g (Y) \rangle = \langle X , Y \rangle , \; \; \text{for all} \; \; g \in \G, \; X, Y \in \Lie \, ,$$
and
$$\langle \ad X (Y) , Z \rangle = - \langle Y , \ad X (Z) \rangle , \; \; \text{for all} \; \; X, Y , Z \in \Lie \, ,$$
where $\Ad g (X) = g X g^{-1}$ and $\ad X (Y) = [X,Y]$. An example of such an inner product is given by any negative multiple of the Killing form \cite{A03}. 

Consider a maximal torus $\To$ of $\G$ and its Lie algebra $\T$, which is a maximal commutative subalgebra of $\Lie$, called a Cartan subalgebra. Let us fix an orthonormal basis ${\mathcal B}_{\T} = \{ T_1, \dots , T_\nu \} $ of $\T$, and identify the dual space $\T^*$ (space of roots)  with $\T$  (space of root vectors).
 
We extend the inner product bi-linearly to the complexified Lie algebra $\Lie_{\comp} = \Lie \oplus i \Lie$.  
The mappings $\ad T \colon \Lie_{\comp} \to \Lie_{\comp}$, $T \in \T$, form a commuting family and are skew-symmetric, so they share eigenspaces and  have purely imaginary eigenvalues.

\begin{definition}\label{def:Root}
We define $R \in \T$ to be a root if $R \neq 0$ and the root space $\Lie_{R} \neq \{0\}$, where
\begin{equation}\notag  \label{eq:RootSpace}
\Lie_{R} = \{ Z \in \Lie_{\comp} \; : \; [T , Z ] = i \; \langle R, T\rangle \, Z  \, , \; \; \text{for all} \; \; T \in \T \;  \}\, .
\end{equation}
\end{definition}

Let $\roots$ be the set of all roots. We call a root positive if its first non-zero coordinate relative to the ordered basis ${\mathcal B}_{\T}$ is positive and let $\proots$ denote the set of all positive roots.

For the following properties of the real root space decomposition we quote \cite[Proposition 6.45, Theorem 6.49]{HM06}. We have 
 $$\Lie = \T \oplus \Ho \, $$ where
 \begin{equation}\label{eq:RootSpaceDecomposition}
\Ho = \T^{\perp} = \bigoplus_{R \in \proots} \, \Ho_R \;  \; \text{and} \; \; 
\Ho_R = (\Lie_{R} \oplus \Lie_{-R} ) \cap \Lie \, .
 \end{equation}
Therefore, we can choose an orthonormal basis of $\Ho $,
\begin{equation}\label{eq:HorizontalBasis}
{\mathcal B}_{\Ho} = \{ X_1, X_2 , \dots , X_{2n -1}, X_{2n} \} \, ,
\end{equation}
with the following properties:
\begin{equation} \label{eq:HorizontalBasisProperties}
 \begin{split}
 (i) & \; \; \text{For all} \; 1\leq j \leq n \; \text{there exists}  \; R_j \in \proots \; \text{such that} \\ & \; \;  \spa \{X_{2j-1}, X_{2j} \} = \Ho_{R_j}.\\
(ii) & \; \; [X_{2j-1}, X_{2j}]= - R_j , \;  [X_{2j}, R_{j}]= - ||R_j||^2 X_{2j-1} , \\ & \hspace{0.25cm}  [R_{j}, X_{2j-1}]= ||R_j||^2 X_{2j} \, .\\
(iii) & \; \; \text{If} \; (m , k) \neq (2j-1, 2j), \; \text{then} \; \; [X_m , X_k ] \in \Ho \, .\\
(iv) & \; \; \text{If} \; T \in \T , \; \text{then} \; \left\{ [X_{2j-1}, T] , \; [X_{2j} , T ] \right\} \subset \Ho_{R_j} \, .  
\end{split}
\end{equation}

Notice that \cite[Proposition 2.20]{A03} the positive roots span the Cartan subalgebra $\T$, but might not form a linearly independent set. To extend ${\mathcal B}_{\Ho}$ to a basis of $\Lie$, let us select a subset of positive roots $\{R_1, ... , R_\nu \}$, which form a basis of $\T$. This can be the set of simple roots, but not necessarily. \par
For $0 < \eps < 1$, define the following vector fields:
\begin{itemize}
	\item For $i \in \{1 ,2n\}$ define $X_i^{\eps} =  X_i$.
	\item For $j \in \{1, \nu\}$ define $R_j^{\eps} = \eps R_j$.
\end{itemize}
Consider the Riemannian approximation given by setting as an orthonormal basis of $\Lie$
the vector fields
$$\{ X_1, ... , X_{2n}, R_1^\eps, ..., R_\nu^\eps \}.$$ 
We now set the \textit{horizontal}  and \textit{vertical} gradients
$$\he u= \sum_{i=1}^{2n} (X_i u) X_i, \hspace{10pt}  \vv u=  \sum_{j=1}^{\nu} (R_j u) R_i, $$ and
the full Riemannian gradient
$$ \nabla^{\eps} u= \he u+ \epsilon \vv u.$$ We also set
$$\we = \delta + |\nabla^{\eps} \ue |^2,$$ and
$$ \ve u = \epsilon \vv u$$

Let us fix a bi-invariant Haar-measure in $\G$.
Consider a domain $\Omega \subset \G$, and the following quasilinear subelliptic equation:
\begin{equation} \label{eq:PDE1G}
\sum_{i=1}^{2n} X_i \left( a_i (\he u) \right) = 0 \, , \;
 \mbox{in} \; \Omega \; ,
\end{equation} 
where for some $0 \leq \delta \leq 1$, $p > 1$ , $0 < l < L$, and for all $\eta, \xi \in {\mathbb R}^{2n}$ the following properties hold:
\begin{align}\label{elliptic11}
\sum_{i,j=1}^{2n} \frac{\partial a_i}{\partial \xi_j}(\xi ) \;
\eta_i \eta_j &
\geq l \Bigl( \delta + |\xi|^2 \Bigr)^{\frac{p-2}{2}} |\eta|^2 \, ,\\\label{elliptic22}
\sum_{i,j=1}^{2n} \left| \frac{\partial a_i}{\partial \xi_j} (\xi
)\right|
& \leq L \Bigl( \delta + |\xi|^2 \Bigr)^{\frac{p-2}{2}} \, ,\\\label{elliptic33}
|a_i (\xi )| & \leq  L \left( \delta + |\xi |^2
\right)^{\frac{p-1}{2}} \, .
\end{align}

We list our main results for a general semi-simple, compact, connected Lie group $\G$.

\begin{theorem} \label{thm:LipschitzG}
Let $p > 1$ and $u \in W_{\Ho,\loc}^{1,p} (\Omega )$ be a weak solution of \eqref{eq:PDE1G}.
Then there exists a constant $c>0$, depending only on $\G, p, l, L$, such that for any Carnot-Carath\`eodory  ball $B_r \subset \subset \Omega$ we have 
\begin{equation}\label{ineq:LinfinityG}
\sup_{B_{r/2}} |\he u | \leq c \left( -\hspace{-0.45cm}\int_{B_r} (\delta + |\he u |^2 )^{\frac{p}{2}} dx \right)^{\frac{1}{p}} \, .
 	\end{equation}

\end{theorem}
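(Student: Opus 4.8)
The plan is to reduce Theorem~\ref{thm:LipschitzG} to the $\SU(3)$ case already proved in Theorem~\ref{thm:Lipschitz} by observing that the only structural facts about $\SU(3)$ used in Section~\ref{sec:5SU(3)} are encoded in the commutator relations of Table~\ref{tab:hresult}, which are replaced in the general setting by the relations \eqref{eq:HorizontalBasisProperties}. First I would record the analogue of the $\eps$-rescaled commutators \eqref{epsiloncom}: for the horizontal basis ${\mathcal B}_{\Ho}=\{X_1,\dots,X_{2n}\}$ and the torus generators $R_1,\dots,R_\nu$ one has, from \eqref{eq:HorizontalBasisProperties}(ii), $[X_{2j-1}^\eps,X_{2j}^\eps]=-\frac{1}{\eps}R_j^\eps$; from \eqref{eq:HorizontalBasisProperties}(iv), $[X_m^\eps,R_j^\eps]\in\eps\,\Ho$; and from \eqref{eq:HorizontalBasisProperties}(iii), $[X_m^\eps,X_k^\eps]\in\Ho$ for all other pairs. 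These are precisely the three types of commutators listed in \eqref{epsiloncom}, with the $\SU(3)$-specific numerical coefficients $4,2,\ldots$ replaced by structure constants bounded in terms of $\G$. Thus the Riemannian approximation PDE \eqref{eq:PDE2}, now written as $\sum_{i=1}^{2n}X_i^\eps(a_i(\nae u))+\sum_{j=1}^{\nu}R_j^\eps(a_{2n+j}(\nae u))=0$ after extending $(a_1,\dots,a_{2n})$ to preserve \eqref{elliptic11}--\eqref{elliptic33}, has smooth solutions for $\delta,\eps>0$ by classical theory.

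Next I would re-run the chain of Lemmas~\ref{lemma:vertical1}--\ref{lemma:vertical4} and Corollary~\ref{cor:vertical3} verbatim. In each proof one differentiates \eqref{eq:PDE3} along a vertical direction $R_j^\eps$ (for the vertical Caccioppoli inequalities) or along a horizontal direction $X_i^\eps$ (for \eqref{ineq:vertical2}), commutes derivatives using the three commutator types above, and tests with $\phi=\eta^2|\ve\ue|^{2\beta}R_j^\eps\ue$ or $\phi=\eta^2\we^\beta X_i^\eps\ue$ and their powers-of-$\eta$ variants. The key identity $\langle\nae\ue,\xe_i\nae\ue\rangle=\langle\nae\ue,\nae\xe_i\ue\rangle$ for vertical $i$ used in the proof of Lemma~\ref{lemma:vertical2} is exactly \eqref{eq:HorizontalBasisProperties}(iv) paired with (ii): the commutator $[X_i,R_j]$ lands in $\Ho$, which is what makes that symmetrization valid, and this is the one place where the precise placement of commutators matters. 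All the interpolation/Young's-inequality bookkeeping, including \eqref{ineq:vertical1.1}, the absorption of $\frac{l}{100}$-terms, and the dependence of constants on $p,l,L$ (now also on $\G$, through the finitely many structure constants and $\nu,n$), goes through unchanged. The outcome is the $\G$-version of \eqref{ineq:vertical5}.

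With the $\G$-analogue of \eqref{ineq:vertical5} in hand, I would close the argument by Moser iteration exactly as in the proof of Theorem~\ref{thm:Lipschitz}. One needs: (a) the Sobolev inequality on $\G$ for the sub-Riemannian balls $B_r^\eps$ associated to $\{X_i^\eps,R_j^\eps\}$, with a constant independent of $\eps$ — this follows from \cite[Theorem V.4.5]{VSC92} together with the volume-doubling and the careful $\eps$-uniform analysis of \cite{CC16}, using the homogeneous dimension $Q=2n+\nu$ (or whatever the correct value is for $\G$) and $\kappa=\frac{Q}{Q-2}$; (b) the inclusion $B_r\subset B_r^\eps$ and $cr^Q\le|B_r|\le|B_r^\eps|$, valid because shrinking the vertical directions can only enlarge the control balls; (c) the cut-off estimate $|\nae(\eta\,\we^{p/4+\beta/2})|^2\le 2|\nae\eta|^2\we^{p/2+\beta}+2(p/2+\beta)^2\eta^2\we^{p/2+\beta}|\nae\nae\ue|^2$, purely algebraic. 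Feeding (c) into the $\G$-version of \eqref{ineq:vertical5} and iterating over a sequence of shrinking balls yields the $\eps$-uniform bound $\sup_{B_{r/2}^\eps}|\nae\ue|\le c(-\!\!\!\int_{B_r^\eps}(\delta+|\nae\ue|^2)^{p/2})^{1/p}$; letting $\eps\to0$ and using that weak solutions of \eqref{eq:PDE1G} are the limits of solutions of the approximating equations (solved with the same boundary data) gives \eqref{ineq:LinfinityG}.

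The main obstacle is not any single estimate but the verification that every commutator produced when differentiating the equation falls into exactly one of the three admissible types with structure constants bounded by a constant depending only on $\G$; concretely, one must check that differentiating along $X_i^\eps$ and re-commuting never generates a term of order $1/\eps^2$ (only $1/\eps$, from a single $[X,X]=-\frac1\eps R$, or $\eps$, from $[X,R]\in\eps\Ho$, or $O(1)$ from $[X,X]\in\Ho$), since the whole Moser scheme depends on the $\eps$-homogeneity of the Caccioppoli inequalities. This is guaranteed by \eqref{eq:HorizontalBasisProperties}, but it is the point that must be stated carefully before one is entitled to say the $\SU(3)$ proof transfers \textit{mutatis mutandis}. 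A secondary point requiring care is the $\eps$-independence of the Sobolev constant; here one cites \cite{CC16}, whose Riemannian-approximation framework was designed for precisely this purpose.
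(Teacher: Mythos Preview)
Your proposal is correct and follows exactly the approach the paper takes: the paper's own ``proof'' of Theorem~\ref{thm:LipschitzG} is nothing more than the observation that the commutator relations \eqref{eq:HorizontalBasisProperties} reproduce the three $\eps$-types of \eqref{epsiloncom}, so that every lemma in Section~\ref{sec:5SU(3)} transfers verbatim with constants now depending on $\G$. One small correction: the homogeneous dimension is $Q=2n+2\nu$ (step-two weighting of the $\nu$-dimensional torus), not $2n+\nu$.
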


\begin{theorem} \label{thm:GradientHolderG}
Let $p \geq 2$ and $u \in W_{\Ho,\loc}^{1,p} (\Omega )$ be a weak solution of \eqref{eq:PDE1G}. Then $\he u \in C^{\alpha}_{\loc} (\Omega)$.
\end{theorem}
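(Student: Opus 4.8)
\noindent The plan is to transcribe, \emph{mutatis mutandis}, the proof of Theorem~\ref{thm:GradientHolder} for $\SU(3)$, using the commutator relations \eqref{eq:HorizontalBasisProperties} in place of Table~\ref{tab:hresult} and the $\eps$-commutators \eqref{epsiloncom}. The first point to verify is that \eqref{eq:HorizontalBasisProperties} produces exactly the same three types of commutators in the Riemannian approximation. Writing $\xe_i=X_i$ for $1\le i\le 2n$ and taking $R_1^\eps,\dots,R_\nu^\eps$ as the vertical fields, property~(ii) gives $[\xe_{2j-1},\xe_{2j}]=-R_j$, which, after expanding the root $R_j$ in the fixed basis $R_1,\dots,R_\nu$ of $\T$, equals $\tfrac1\eps$ times a fixed linear combination of the $R_k^\eps$ (the ``bad'' commutators); property~(iv) gives $[\xe_\ell,R_k^\eps]=\eps[X_\ell,R_k]\in\eps\,\Ho$, an $\eps$-small horizontal correction; property~(iii) gives $[\xe_m,\xe_k]\in\Ho$ for every non-root pair; and $[R_j^\eps,R_k^\eps]=\eps^2[R_j,R_k]=0$ since $\T$ is abelian. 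The auxiliary identity $\langle\nae u,\xe_i\nae u\rangle=\langle\nae u,\nae\xe_i u\rangle$ for the vertical indices $i$, used repeatedly in Section~\ref{sec:5SU(3)}, follows from the skew-symmetry of $\ad R_k$ on $\Lie$ together with (ii)--(iv). All constants now depend in addition on $\G$ (through $n$, $\nu$, the root lengths $\|R_j\|$ and the structure constants of $\Lie$), which the statement allows.

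With this dictionary in hand, I would first re-derive Theorem~\ref{thm:LipschitzG} by repeating Lemmas~\ref{lemma:vertical1}--\ref{lemma:vertical4}, Corollary~\ref{cor:vertical3} and the Moser iteration of Section~\ref{sec:5SU(3)} essentially word for word; the only global ingredient is the family of Sobolev inequalities for $\{X_1,\dots,X_{2n},R_1^\eps,\dots,R_\nu^\eps\}$ with a constant independent of $\eps$, available as in \cite{CC16,VSC92} with homogeneous dimension $Q=2n+2\nu$, together with the uniform volume bound $cr^Q\le|B_r|\le|B_r^\eps|$ for $0<r\le1$. Letting $\eps\to0$ yields \eqref{ineq:LinfinityG}. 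Combining this Lipschitz bound with the analog of Theorem~\ref{thm:smooth} (which rests on \cite[Theorem~1.1]{DM09}) shows that, for $\delta>0$, weak solutions of \eqref{eq:PDE1G} are $C^\infty$; dividing the $\eps$-homogeneous Caccioppoli estimates by the appropriate powers of $\eps$ and letting $\eps\to0$ then gives the intrinsic Caccioppoli inequalities \eqref{ineq:Cacciopoli1}--\eqref{ineq:Cacciopoli5} on $\G$, with $w=\delta+|\he u|^2$.

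The core step is the De Giorgi-type Caccioppoli inequality, i.e.\ the analog of Lemma~\ref{lemma:DeGiorgi}: for each $s\in\{1,\dots,2n\}$, each $k$ with $|k|<M=\sup_{B_{2r_0}}|\vv u|$, and $v=(X_s u-k)^{\pm}$, one differentiates \eqref{eq:PDE1G} along $X_s$, tests the resulting equation against $\eta^2 v$, and absorbs the commutator terms. By (ii)--(iii) the only commutator $[X_s,X_i]$ that is not itself horizontal occurs when $\{s,i\}$ is a root pair $\{2j-1,2j\}$, in which case it equals $\mp R_j$; this is precisely the term carrying the vertical derivative $\vv u$, and it is absorbed exactly as in the $\SU(3)$ argument, by H\"older's inequality and repeated use of \eqref{ineq:Cacciopoli1}--\eqref{ineq:Cacciopoli5}, iterating the auxiliary quantities $\Gamma_\beta$ and $\Lambda_\beta$. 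The outcome is
\begin{multline*}
\int_{B_r}\eta^2\,w^{\frac{p-2}{2}}\,|\he v|^2\,dx \;\le\; \frac{c}{(r-r')^2}\int_{B_r} w^{\frac{p-2}{2}}\,v^2\,dx\\
+\; c\,(\delta+M^2)^{\frac{p}{2}}\,|A^{\pm}_{s,k,r}|^{1-\frac2q},
\end{multline*}
with $c$ depending only on $\G,p,l,L,r_0,q$. This says that each $X_s u$ belongs to a De Giorgi class on the Carnot--Carath\'eodory space $(\G,d)$, which is a doubling metric measure space supporting a Poincar\'e inequality; hence each $X_s u\in C^{\alpha}_{\loc}(\Omega)$ by the standard De Giorgi iteration in homogeneous metric measure spaces \cite{G03,KS01,KMM12,LU68,R18,Z17}, and therefore $\he u\in C^{\alpha}_{\loc}(\Omega)$.

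I expect the main obstacle to be organizational rather than conceptual. One must keep track of the fact that a positive root $R_j$ arising in $[X_{2j-1},X_{2j}]$ need not belong to the chosen basis $\{R_1,\dots,R_\nu\}$ of $\T$, so every such occurrence has to be re-expanded in that basis --- this is the only genuinely new bookkeeping relative to $\SU(3)$, and it affects only the values of the constants --- and one must verify that the Sobolev constant in the Riemannian approximation is truly uniform in $\eps$ (as established in \cite{CC16}). Once these points are handled, the commutator computations are word-for-word the same as those carried out in detail for $\SU(3)$.
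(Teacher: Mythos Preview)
Your proposal is correct and follows exactly the approach the paper itself takes: the paper's proof of Theorem~\ref{thm:GradientHolderG} consists of the observation that, by \eqref{eq:HorizontalBasisProperties}, the commutator structure in the Riemannian approximation reproduces precisely the three types in \eqref{epsiloncom}, so that the proofs of Sections~\ref{sec:5SU(3)} and~4 carry over verbatim with $Q=2n+2\nu$. Your write-up is in fact more explicit than the paper's own treatment, correctly flagging the only new bookkeeping point (expanding a root $R_j=[X_{2j},X_{2j-1}]$ in the chosen basis $\{R_1,\dots,R_\nu\}$ of $\T$) and the need for the $\eps$-uniform Sobolev constant from \cite{CC16}.
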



Regarding the Riemannian approximation as $\eps \to 0$, by \eqref{eq:HorizontalBasisProperties},  the  commutation relations that arise are exactly the same as  those described in \eqref{epsiloncom}. This means that all the proofs in Sections \S 3 and \S 4 carry over with minor modifications (for example, the homogeneous dimension is $Q= 2n+2\nu$),  and our results are valid in any semi-simple, compact, connected Lie group $\G$. \par 
\bibliographystyle{alpha}
\bibliography{DomokosManfredi}
\end{document}